\documentclass[reqno,11pt]{amsart}
\usepackage[utf8]{inputenc}
\usepackage{graphicx}
\usepackage{slashed}
\usepackage{amscd}
\usepackage{amssymb}
\usepackage{esint}
\usepackage{enumitem}
\usepackage[T1]{fontenc}
\usepackage{subcaption}

\usepackage[percent]{overpic}

\usepackage{pst-node}
\usepackage{tikz-cd}

\usepackage{comment}
\usepackage{amsmath}
\usepackage{dsfont}
\usepackage[mathscr]{eucal}
\ifx\makepics\undefined

\usepackage{hyperref}
\else
\usepackage[pspdf=-dALLOWPSTRANSPARENCY]{auto-pst-pdf} 
\fi
\usepackage{epsfig}
\usepackage{pstricks}
\usepackage{pst-all}
\def\Inf{\operatornamewithlimits{inf\vphantom{p}}}

\numberwithin{equation}{section}
\allowdisplaybreaks[1]
\definecolor{labelkey}{gray}{.65}

 \author[M. van den Beld-Serrano]{Marco van den Beld-Serrano \\ \\ June 2026}
 \address{Fakult\"at f\"ur Mathematik \\ Universit\"at Regensburg \\ D-93040 Regensburg \\ Germany}
 \email{marco.van-den-beld-serrano@ur.de}

\renewcommand{\div}{{\rm{div}}}
\newtheorem{Def}{Definition}[section]
\newtheorem{Thm}[Def]{Theorem}

\newtheorem{Prp}[Def]{Proposition}
\newtheorem{Lemma}[Def]{Lemma}
\newtheorem{Remark}[Def]{Remark}

\newtheorem{Corollary}[Def]{Corollary}
\newtheorem{Example}[Def]{Example}

\newcommand{\Thanks}{\vspace*{.5em} \noindent \thanks}
\newcommand{\beq}{\begin{equation}}
\newcommand{\eeq}{\end{equation}}
\newcommand{\Proof}{\begin{proof}}

\newcommand{\R}{\mathbb{R}}

\newcommand{\N}{\mathbb{N}}

\DeclareFontFamily{OT1}{rsfso}{}
\DeclareFontShape{OT1}{rsfso}{m}{n}{ <-7> rsfso5 <7-10> rsfso7 <10-> rsfso10}{}
\DeclareMathAlphabet{\myscr}{OT1}{rsfso}{m}{n}

\newcommand{\Fpq}{F^{\alpha}_{p,q}}
\newcommand{\Tspace}{\mathcal{T}^{p}}
\newcommand{\Tt}{\mathcal{T}^{p,t}}

\title{The alignment time function}

\begin{document}

\begin{abstract}
    Given a fixed past-directed timelike vector field, does there exist a time function whose gradient is optimally aligned with it? We address this question by introducing a functional that, on the one hand, captures the misalignment between the timelike vector field and the gradients of suitable Sobolev functions, and, on the other hand, penalizes null gradients. Our analysis focuses on compact subsets of smooth stably causal spacetimes. More precisely, we prove that, under suitable assumptions on the Sobolev index and the strength of the null gradient penalization, there exists a unique smooth temporal function which minimizes the considered functional. We refer to this minimizer as the \emph{alignment time function}. Furthermore, several useful properties of the alignment time function are established: there exists a canonical procedure to improve its steepness, it is stable under $C^{p}$ convergence of the underlying metrics and vector fields and it inherits the symmetries shared by the metric and the given vector field. 
\end{abstract}

\maketitle
\tableofcontents

\section{Introduction}

A well-known result in differential geometry, the Frobenius theorem, implies that not every vector field is necessarily the gradient of a smooth function. However, does there still exist a function whose gradient, in a suitable sense, best aligns with it? And, if yes, is it unique? What properties does such a function present? We explore these questions in the context of compact subsets of Lorentzian manifolds and with the following twofold aim: 
\begin{itemize}[leftmargin=2em]
    \item[(i)] In the first place, we aim to select the function whose gradient is optimally aligned with respect to a fixed past-directed timelike vector field $u$.
    \item[(ii)] Secondly, the minimizer should present, in a suitable sense, an improved steepness with respect to $u$. Intuitively, this means that the gradient of the minimizer should be bounded further away from the lightcone than $u$.
\end{itemize}
Quite remarkably, these guidelines require simultaneously tools from Riemannian and Lorentzian geometry, and the calculus of variations: on the one hand, (i) requires to introduce a Riemannian metric $h$ and a functional depending on the corresponding Sobolev spaces which quantifies the misalignment between a function's gradient and $u$. On the other hand, guideline (ii) can be successfully addressed because of the particular features of the Lorentzian norm (more precisely, the reverse triangle inequality guarantees strict convexity of a null-gradient penalizing functional). Finally, the direct method in the calculus of variations plays a crucial role in proving existence of a minimizer.

From the physical perspective it is important to remark that several cosmological or quantum gravity models assume the existence of a privileged timelike vector field. For instance, the theory of causal fermion systems (see \cite{baryogenesis1,baryogenesis2,baryogenesis3}) the vector-tensor theories of gravity (e.g. the bumblebee model, see \cite{Kostelecky,Bumblebee2025}) or the Einstein-aether theory of gravity (\cite{Jacobson1,Jacobson2}). Therefore, the outcomes of this project entail existence and uniqueness of a time function adapted to such a privileged timelike vector field in these different theories. On the other hand, note that some authors directly assume the existence of a preferred time function or a preferred foliation (see \cite{Horava1}), or study its implications to causality theory (see \cite{Carballo,Bhattacharyya}). 
Hence, the presented construction also bridges the gap between the assumption of a preferred timelike vector field and a preferred time function. It is to be observed that also in the foundational research in General Relativity, the requirement of background independence remains open to debate (\cite{Vassallo}). In any case, note that there exist tight experimental constraints on Lorentz violation (cf. \cite{KosteleckyRusell}). 
\subsection{Outline of the main results}\label{sec:outline}
\hfill

 \vspace{0.2cm}

We present an overview of the setup and main results of this paper. Let $(M,g)$ be a smooth stably causal spacetime, $\Omega\subset M$ a connected compact subset with Lipschitz boundary and $u$ a smooth past-directed timelike vector field. Upon the choice of a Riemannian metric $h$ (which can be constructed solely from $g$ and $u$, cf. \eqref{eq:Riemannian_metric}), consider the set of Sobolev functions $\Tspace\subset H^{p}(\Omega)$ whose gradient $\nabla^{g}t$ is a.e. past-directed causal or vanishes in $\Omega$ and which satisfy the zero mean condition (i.e. $\int_{\Omega}td\mu_{h}=0$ for $t\in \Tspace$). 

The \emph{misalignment functional} $\Fpq:\Tspace\rightarrow[0,\infty]$ is given by 
\begin{align*}
    \Fpq(t):=\begin{cases}
        \|u-\nabla^{g} t\|^{2}_{H^{p-1}(\Omega)} & \textrm{if }\alpha=0\;,\\
        \|u-\nabla^{g} t\|^{2}_{H^{p-1}(\Omega)}+\alpha\int_{\Omega}\frac{1}{|\nabla^{g}t|_{g}^{2q}}d\mu_{h} & \textrm{if } \alpha>0\;,
    \end{cases}\quad
\end{align*}
where $p,q\in\N$ and we set $\Fpq(t)=\infty$ if $\alpha>0$ and $t\in\Tspace$ is not a.e. timelike. This functional addresses simultaneously goals (i) and (ii) sketched above: the first term in $\Fpq$ quantifies the misalignment between $u$ and the gradient of functions in $\Tspace$ whereas the second term penalizes (for $\alpha>0$) functions whose gradient is null or vanishes on positive measure subsets of $\Omega$ and penalizes closeness to the lightcone. However, the choice of a functional and set $\Tspace$ satisfying the desired features is not obvious and, consequently, in Remark \ref{rem:optimal_regularity} and Example \ref{ex:alternative_functionals} the different ingredients appearing in the variational problem are discussed in detail. 

We can now state our main result: if the Sobolev index $p$ and the penalization index $q$ are sufficiently large, then there exists for all $\alpha>0$ a unique temporal function $t_{\alpha}$, smooth in $\Omega^{\circ}$, which minimizes the misalignment functional. We refer to it as the \emph{alignment time function}. Moreover, if $u$ is of gradient form, then the minimizer $t_{0}$ of $F^{0}_{p,q}$ satisfies that $u=\nabla^{g}t_{0}$. The precise statement of this result is the following:
\begin{Thm}\label{theo:main_result_compact}
    Let $(M,g)$ be an $n$-dimensional smooth stably causal spacetime, $u$ a smooth past-directed timelike vector field and $\Omega\subset M$ compact and connected with Lipschitz regular boundary. Then,
\begin{itemize}[leftmargin=2em]
    \item[i)] If $\alpha>0$ and $p\geq2$, the functional $\Fpq$ has a unique minimizer $t_{\alpha}\in\Tspace$ with a.e. timelike gradient in $\Omega$ for any $q\in\N$.
    \item[ii)] If $\alpha>0$, $\gamma\in(0,1)$, $p> \frac{n}{2}+1+\gamma$ and $q\geq n/\gamma$, the gradient of the minimizer $t_{\alpha}$ is everywhere timelike in $\Omega$ and $t_{\alpha}\in C^{\infty}(\Omega^{\circ})$.
    \item[iii)] If $\alpha=0$, there exists a unique minimizer $t_{0}\in\Tspace$ of $F^{0}_{p,q}$ for any $p\in \N$ which, if $u$ is of gradient form, satisfies that $u=\nabla^{g}t_{0}$.
\end{itemize}
\end{Thm}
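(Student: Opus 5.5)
The plan is to apply the direct method of the calculus of variations on the set $\Tspace$, which is convex and closed in $H^{p}(\Omega)$, and then to run a regularity bootstrap.

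\textbf{Structure of $\Tspace$.} I will first check that $\Tspace$ is convex. If $\nabla^{g}t_1$ and $\nabla^{g}t_2$ are past-directed causal or zero a.e., then so is any convex combination, since the past causal cone is convex. The zero-mean condition is linear, so it is preserved as well.

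Next I will check that $\Tspace$ is weakly closed in $H^{p}$. Weak $H^p$ convergence gives strong $H^{p-1}$ convergence of the gradients by Rellich, using the Lipschitz boundary. Passing to a subsequence, the gradients converge a.e., and the closed cone condition survives the limit.

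Coercivity comes from the zero-mean condition. Poincaré gives $\|t\|_{H^{p}}\lesssim \|\nabla t\|_{H^{p-1}}$, and $\|\nabla t\|_{H^{p-1}}\le \|u-\nabla^g t\|_{H^{p-1}}+\|u\|_{H^{p-1}}$. Here $\nabla^g$ and the $h$-gradient differ by a smooth isomorphism on compact $\Omega$. Hence minimizing sequences are bounded in $H^p$ and have weakly convergent subsequences.

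\textbf{Lower semicontinuity and uniqueness.} The first term of $\Fpq$ is a squared Hilbert norm of an affine expression in $t$, so it is weakly lsc and convex, and strictly convex in $\nabla t$. By zero mean, $\nabla t$ determines $t$. This already gives iii). If $u=\nabla^g f$, then $f-\bar f\in\Tspace$ attains the value $0$, so it is the unique minimizer.

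For $\alpha>0$ I need the penalization term $\int |\nabla^g t|_g^{-2q}$ to be convex and lsc on timelike gradients. The reverse triangle inequality shows that $v\mapsto|v|_g$ is concave on the past timelike cone. Composing with the convex decreasing map $s\mapsto s^{-2q}$ gives a convex function. Lower semicontinuity follows from Fatou, using a.e. convergence of the gradients, which holds because $p\ge2$ gives strong $L^2$ convergence of $\nabla t$.

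The infimum is finite because $t$ built from a time function works. Stable causality provides a smooth temporal function, whose $g$-gradient is bounded away from the lightcone on compact $\Omega$; after normalizing the mean it gives $\Fpq<\infty$. Finiteness of the functional at the minimizer forces its gradient to be a.e. timelike. This proves i).

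\textbf{Regularity, part ii) --- the main obstacle.} Since $p>\frac n2+1+\gamma$, Morrey embedding gives $t_\alpha\in C^{1,\gamma}(\Omega)$, so $\phi:=|\nabla^g t_\alpha|_g^2$ is $\gamma$-Hölder. Suppose $\phi(x_0)=0$ at some point. Then $\phi(x)\lesssim |x-x_0|^{\gamma}$, so $\phi^{-q}\gtrsim|x-x_0|^{-\gamma q}$. With $\gamma q\ge n$ this is not integrable, near interior points and also near boundary points thanks to the Lipschitz cone condition. This contradicts finiteness of $\Fpq$. Hence $\nabla^g t_\alpha$ is timelike everywhere, and by compactness it is uniformly bounded away from the cone.

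With the gradient uniformly timelike, the penalization is smooth near $t_\alpha$, and I can take variations $t_\alpha+\varepsilon\varphi$ with $\varphi\in C_c^\infty(\Omega^\circ)$ of zero mean, which stay in $\Tspace$ for small $\varepsilon$. The resulting Euler--Lagrange equation is an elliptic equation of order $2p$: the leading part is $(-\Delta)^{p-1}$-type applied to $\nabla t$, plus a lower-order nonlinear divergence term $\alpha q\,\mathrm{div}(\phi^{-q-1}\nabla t)$ and a Lagrange multiplier constant.

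Interior elliptic regularity then bootstraps $H^{p}_{loc}\to H^{p+1}_{loc}\to\cdots$, because the nonlinearity is smooth in $\nabla t$ and lies two orders higher in the regularity gained each step. This yields $t_\alpha\in C^\infty(\Omega^\circ)$. The delicate point I expect here is justifying the weak form of this high-order equation and the difference-quotient estimates. I would handle these by Nirenberg difference quotients applied to the $H^{p-1}$ inner product in local charts.
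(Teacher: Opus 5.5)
Your overall route matches the paper's proof. You use coercivity from Poincar\'e--Wirtinger, convexity of the penalty via the reverse triangle inequality, and Rellich plus Fatou for $p\ge 2$. Finiteness comes from a temporal function, and the H\"older blow-up $\phi\lesssim|x-x_0|^{\gamma}$ against $q\ge n/\gamma$ gives everywhere timelikeness. Regularity comes from an interior bootstrap for the order-$2p$ Euler--Lagrange equation. The paper cites Lions--Magenes where you propose difference quotients; either works. Your Lagrange multiplier is in fact zero, because for $\varphi\in C^\infty_0(\Omega^\circ)$ the shifted $\varphi-\bar\varphi$ is an admissible direction with the same gradient.

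\textbf{Gap: weak closedness of $\Tspace$ at $p=1$.} You claim that weak convergence $t_k\rightharpoonup t$ in $H^{p}$ gives strong $H^{p-1}$ convergence of the gradients. That is off by one derivative. Rellich gives strong convergence of $t_k$ in $H^{p-1}$, hence of $\nabla t_k$ only in $H^{p-2}$.
\begin{itemize}
\item For $p\ge 2$ this still yields strong $L^2$ convergence of the gradients, hence a.e.\ convergence along a subsequence, so your argument goes through.
\item Part iii) is asserted for every $p\in\N$, including $p=1$. There the gradients converge only weakly in $L^2$; for example $k^{-1}\sin(kx)$ is bounded in $H^1$ but its derivatives do not converge a.e. So your closedness argument, and hence existence for $\alpha=0$, $p=1$, is not established as written.
\end{itemize}
The repair is the one the paper uses. $\Tspace$ is convex, as you showed, and norm-closed, since strong $H^p$ convergence does give a.e.\ convergent gradients along a subsequence. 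A convex norm-closed set is weakly closed by Mazur's lemma.

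\textbf{Smaller point: lower semicontinuity of $F_{g,q}$ must hold on all of $\Tspace$.} You need lower semicontinuity at the weak limit before you know it lies in $\Tt$, so ``lsc on timelike gradients'' alone would be circular. Run Fatou with the integrand $|v|_g^{-2q}$ extended by $+\infty$ on null and zero vectors. Where the limit gradient is null or zero, $|\nabla^g t_k|_g\to 0$ a.e., so Fatou forces $\liminf_k F_{g,q}(t_k)=+\infty$ if that set has positive measure. This is exactly the paper's separate argument that the limit of a minimizing sequence belongs to $\Tt$.
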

Section \ref{sec:compact_setting} is devoted to the proof of the above theorem. As a first step we establish in Lemma \ref{lem:props_functional} coercivity, strict convexity and weak lower semicontinuity of the functional (on the subset of functions in $\Tspace$ with a.e. timelike gradient). These features enable us to prove existence and uniqueness of minimizers in Theorem \ref{theo:main_result_compact} which, for $\alpha>0$, have an a.e. timelike gradient. Nevertheless, the remaining problem is to upgrade a.e. timelikeness to everywhere timelikeness. This is where the null-gradient penalizing index $q$ comes into play. Proposition \ref{prop:smooth_minimizer} shows that if $p$ and $q$ are large enough, then the gradient of the minimizer must be everywhere timelike. In particular, under these conditions, $\Fpq$ is Gateaux differentiable and the associated interior Euler-Lagrange equation is uniformly elliptic (Proposition \ref{prop:smooth_minimizer}). Interior smoothness of the minimizer follows. Note that the interior Euler-Lagrange equation has a non-linear lower order term which, up to a constant, corresponds to the $p$-d'Alembertian studied extensively in the last years (e.g. see \cite{Mondino_Suhr,octet}).

It is important to remark that, for $\alpha>0$, the gradient of the minimizer $t_{\alpha}$ need not agree with $u$ (even if it is of gradient form). This is motivated by the mentioned aim (ii). In particular, if $u$ is of gradient form, the gradient of $t_{\alpha}$ presents an improved average steepness with respect to $u$ in the following sense (cf. Remark \ref{rem:bounded_gradient})
\begin{align}\label{eq:average_steepness}
    \int_{\Omega}\frac{1}{|\nabla^{g}t_{\alpha}|_{g}^{2q}}d\mu_{h}\leq\int_{\Omega}\frac{1}{|u|_{g}^{2q}}d\mu_{h}\;,
\end{align}
but this cannot be upgraded to a pointwise comparison between $|u|_{g}$ and $|\nabla^{g}t_{\alpha}|_{g}$.

Given the non-constructive nature of the existence proof for the alignment time function, in Section \ref{sec:properties} the emphasis is on determining further features of this temporal function. These properties can be summarized as follows:
\begin{itemize}[leftmargin=2em]
    \item In the first place, as the parameter $q$ describes the strength of the null gradient penalization, it is of interest to consider the sequence $(t_{p,q})_{q\in\N}$ of minimizers of $\Fpq$ (for fixed $p\geq 2$, $\alpha>0$). Proposition \ref{prop:q_infinity} and Corollary \ref{cor:Hp_convergence} show $H^{p}$-convergence of the sequence and an a.e. steepness estimate for the limiting function $t_{p,\infty}\in \Tspace$:
    \begin{align*}
        \limsup_{q\to\infty}\underset{x\in\Omega}{\text{\rm{ess inf}}}\;\big(|\nabla^{g}t_{p,q}|_{g}(x)\big)\leq\underset{x\in\Omega}{\text{\rm{ess inf}}}\;\big(|\nabla^{g}t_{p,\infty}|_{g}(x)\big)\in[1,\infty)\;.
    \end{align*}
    Note that, in general, the infimum of $|\nabla^{g}t_{p,q}|_{g}$ does not increase monotonically with $q$ (see Example \ref{ex:cylinder}).\\
    For $p\geq 2$, the limiting function $t_{p,\infty}$ is a.e. steep, for $p>n/2$ it is an isotone/causal function (non-decreasing function along future-directed causal curves) and for $p>n/2+1$ it is a temporal function (Remark \ref{rem:tpinfty_isotone}). In addition, $t_{p,\infty}$ is the minimizer of $F^{0}_{p,q}$ on the set of a.e. steep functions in $\Tspace$ with steepness constant $C\geq 1$.\\
    Moreover, if $u$ is of gradient form, then (Corollary \ref{cor:pointwise_bound})
    \begin{align*}
        \underset{x\in\Omega}{\text{\rm{inf}}}\;\big(|u|_{g}(x)\big)\leq \underset{x\in\Omega}{\text{\rm{ess inf}}}\;\big(|\nabla^{g}t_{p,\infty}|_{g}(x)\big)\;,
    \end{align*}
    and an analogous steepness bound holds (Remark \ref{rem:bounded_gradient}) for any past-directed timelike vector field (i.e. even if $u$ is not of gradient form) after multiplying the null gradient penalization functional with a suitable constant (however, for simplicity, we prefer to stick to our original choice of the penalty functional).
    \item Secondly, it is relevant to investigate stability of the minimizer under perturbations in the vector field $u$. For this purpose, we fix a background Riemannian metric $h$ (i.e. not constructed from $u$ and $g$) since then the considered Sobolev norms and $\Tspace$ remain unchanged under perturbations in $u$. With this setup, in Proposition \ref{prop:stability} it is shown that the alignment time function is Lipschitz stable in the $H^{p}(\Omega)$ topology under $H^{p-1}(\Omega)$ perturbations of $u$.
    \item Furthermore, the alignment time function presents an important feature in the context of spacetime convergence. Namely, given $C^{p}$ convergent sequences of Lorentzian metrics $g_{k}$ and vector fields $u_{k}$ (with $p$ sufficiently large) the sequence of corresponding alignment time functions also converges strongly (Proposition \ref{prop:spacetime_convergence}).
    \item Finally, the alignment time function inherits the symmetries of the vector field $u$ and the subset $\Omega$ (Proposition \ref{prop:symmetric}): if there exists an isometry $\Phi: M\to M$ which leaves $u$ and $\Omega$ invariant (i.e. $\Phi(\Omega)=\Omega$ and $d\Phi(u)=u$), then $t_{\alpha}\circ\Phi=t_{\alpha}$. It can be shown that no symmetry group $G\subset \textrm{Isom}(M)$ acting transitively on non-empty open subsets of $\Omega$ (cf. Corollary \ref{cor:symmetry_implications}) can leave $u$ and $\Omega$ invariant. This symmetry preservation property can also be used to give (restrictive) conditions under which $t_{\alpha}$ is a Cauchy temporal function (Corollary \ref{cor:Cauchy}).
    
\end{itemize}

\subsection{Preliminaries}\hfill

 \vspace{0.2cm}

We fix the basic definitions and conventions that will be used throughout this paper. A $n$-dimensional $C^{k}$ spacetime $(M,g)$ is an oriented and time oriented smooth $n$-dimensional manifold $M$, with $n\geq2$, equipped with a $C^{k}$ regular Lorentzian metric $g$. The set of smooth vector fields in $M$ is denoted by $\Gamma(TM)$. The convention $(-,+,\ldots,+)$ is used for the signature of the Lorentzian metric $g$. With this convention, a tangent vector $X_{p}\in T_{p}M\setminus\{0\}$ is spacelike, null or timelike if $g_{p}(X_{p},X_{p})>0,=0,<0$, respectively.

This paper uses several important tools and well-known results from causality theory (see \cite{Minguzzi_Sanchez} or \cite{Minguzzi_causality} for a survey on the topic). In particular, \emph{time functions} (continuous functions which are strictly increasing along future-directed causal curves) and \emph{temporal functions} (smooth or, at least, $C^{1}$ functions with an everywhere past-directed timelike gradient) will play a prominent role. It is important to note that temporal functions are time functions but not vice-versa (e.g. consider the time function $f(t,x^{1},\ldots, x^{n-1})=t^{3}$ on $n$-dimensional Minkowski spacetime $\R^{1,n}$). Additionally, a temporal function $t$ is \emph{steep} if there exists a constant $C>0$ (some authors demand that $C=1$) such that 
\begin{align*}
    -g_{p}(\nabla^{g}t|_p,\nabla^{g}t|_p)\geq C\;,
\end{align*}
for all $p\in M$, where $\nabla^{g}t$ is the gradient vector field of $t$, and a function is \emph{almost everywhere steep} if the inequality holds a.e. in $M$. Since we will only consider temporal functions in compact subsets $\Omega\subset M$, they are automatically steep (in $\Omega$).

Moreover, a spacetime $(M,g)$ is stably causal if there exists another metric $g'$ on $M$ such that $(M,g')$ is causal and $g'$-causal tangent vectors are $g$-timelike (for alternative characterizations of stable causality see the above surveys). For our purposes, the importance of stable causality resides in the fact that it is the lowest rung in the causal ladder ensuring existence of a temporal function (see \cite[Theorem 3.56]{Minguzzi_Sanchez}).

Furthermore, given a Riemannian metric $h$ and a compact subset $\Omega\subset M$, we can define the following Sobolev and $L^{2}$ spaces: for an arbitrary $p\in\N$, $H^{p}(\Omega,h)$ is the space of functions $f:\Omega\subset M\to \R$ whose first $p$-weak derivatives (with respect to the $h$-Levi Civita connection $\nabla^{h}$) satisfy that
\begin{align}
    f\in L^{2}(\Omega,h), \quad |\nabla^{h}f|_{h}\in L^{2}(\Omega,h), \quad\ldots,\quad |(\nabla^{h})^{p}f|_{h}\in L^{2}(\Omega,h)\;,
\end{align}
where the $L^{2}(\Omega,h)$ and norm of $(\nabla^{h})^{i}f$ and $H^{p}(\Omega,h)$ norm of $f$ are defined as
\begin{align*}
    &\|(\nabla^{h})^{i}f\|_{L^{2}(\Omega,h)}^{2}:=\int_{\Omega} |(\nabla^{h})^{i}f|_{h}^{2}d\mu_{h}\;,\quad \|f\|_{H^{p}(\Omega,h)}^{2}:=\sum_{i=0}^{p}\|(\nabla^{h})^{i}f\|_{L^{2}(\Omega,h)}^{2}\;,
\end{align*}
with\footnote{Since $\nabla^{h}f$ denotes the $h$-gradient vector field of $f$, we explicitly write $(\nabla^{h})^{1}f$ to denote the action of $\nabla^{h}$ on the function $f$. Since $|df|_{h}=|\nabla^{h}f|_{h}$, we will often use $\|\nabla^{h} f\|_{L^{2}(\Omega)}$ for $\|(\nabla^{h})^{1} f\|_{L^{2}(\Omega)}$.} $(\nabla^{h})^{0}f:=f$, $(\nabla^{h})^{1}f=df$ and $d\mu_{h}$ the measure induced by the Riemannian metric $h$. See also \cite[Section 2.1]{Hebey} or \cite[Section 2]{Aubin_problems} for the general definition of Sobolev spaces on Riemannian manifolds. One defines analogously Sobolev spaces of general tensor fields. If clear by context, explicit mention to $h$ will be omitted and $H^{p}(\Omega)$ and $L^{2}(\Omega)$ will denote the corresponding spaces.

Finally, also tools from the calculus of variations will be employed. Let $F: X\to \R$ denote a Gateaux differentiable functional, with $X$ a Banach space, and $u\in X$. Then, the Gateaux derivative of $F$ at $u\in X$ in the direction of $v\in X$ will be denoted by 
\begin{align*}
    DF(u)[v]:=\frac{d}{ds}F(u+sv)\Big|_{s=0}\;.
\end{align*}

\section{The main result}\label{sec:compact_setting}
\subsection{The setup}\label{subsec:setup}
\hfill
 \vspace{0.2cm}

Let $(M,g)$ be an $n$-dimensional smooth stably causal spacetime, $u$ a smooth past-directed timelike vector field and $\Omega\subset M$ a connected and compact subset with Lipschitz regular boundary. In particular, the timelike vector field can be used to define the following smooth Riemannian metric (see \cite[p. 39]{HawkingEllis} or \cite[Chapter 5, Lemma 36]{Oneill})
\begin{equation}\label{eq:Riemannian_metric}
    h:=g+2\frac{u^{\flat}\otimes u^{\flat}}{|u|^{2}_{g}}\;.   
\end{equation}
The relation between the geometric properties of $h$ and the pair $(g,u)$ have been analyzed thoroughly, see for example \cite{Aazami,Reddy,Olea}.
Note that, except for Proposition \ref{prop:stability} (where a background Riemannian metric is fixed), only the Riemannian metric \eqref{eq:Riemannian_metric} will be considered.

Given the Riemannian metric $h$ \eqref{eq:Riemannian_metric}, we can consider the corresponding Sobolev spaces $H^{p}(\Omega)$ as discussed in the preliminaries. It is well-known that in compact Riemannian manifolds the Sobolev spaces are independent of the chosen metric (cf. \cite[Proposition 2.3]{Hebey}). However, there are two main reasons why a specific Riemannian metric (and, in particular \eqref{eq:Riemannian_metric}) is fixed. In the first place, note that the functional that will be minimized (and thus its minimizer) will depend on this choice. Hence, constructing the metric $h$ uniquely from the pair $(g,u)$ allows us to tackle our variational problem without introducing a Riemannian metric as additional external input. Secondly, the aim of subsequent work will be to extend the results obtained in this paper to the non-compact setting, where the Sobolev spaces do depend on the metric.

Furthermore, it is to be observed that the concepts introduced in this section, the main existence and uniqueness results (Section \ref{subsec:minimizer}) and the interior Euler-Lagrange equation (Section \ref{sec:elliptic_regularity}) do also hold if one considers the Sobolev spaces $W^{p,r}(\Omega)$ (i.e. the space functions whose first $p$ weak derivatives are in $L^{r}(\Omega)$) with $r\in(1,\infty)$, which are not Hilbert spaces, instead of $H^{p}(\Omega)$. Nevertheless, these results do not hold for $W^{p,\infty}(\Omega)$ as it is not a reflexive Banach space. In this work the space $H^{p}(\Omega)$ is preferred over $W^{p,r}(\Omega)$ (with $r\in(1,\infty)$) since Sections \ref{subsec:stability} and \ref{subsec:limit} do exploit the additional inner product space structure of the former.

In the following definition we introduce the misalignment functional and the sets of functions over which the functional will be minimized.

\begin{Def}\label{def:space_functional}
    Let $\alpha\geq0$ be a real number and $p,q\geq1$ natural numbers. Consider the following sets of functions:
    \begin{align*}
        &\mathcal{T}^{p}:=\{t\in H^{p}(\Omega)|\nabla^{g}t \;\textrm{is past-directed causal or vanishes a.e. in $\Omega$ with}\int_{\Omega}t\;d\mu_{h}=0\},\\
        &\mathcal{T}^{p,t}:=\mathcal{T}^{p}\cap\{|\nabla^{g}t|_{g}\neq0 \;\textrm{a.e. in }\Omega\}\;.
    \end{align*}
    Moreover, the misalignment functional $F^{\alpha}_{p,q} : \mathcal{T}^{p}\subset H^{p}(\Omega)\rightarrow (0,\infty]$ is:
    \begin{align}
    &F^{\alpha}_{p,q}(t)=
    \begin{cases}
        F_{h,p}(t) \quad &\textrm{for } \alpha=0\\
        F_{h,p}(t)+\alpha F_{g,q}(t)&\textrm{for }\alpha>0\;,
    \end{cases}
    \end{align}
where $F_{h,p} :\Tspace\rightarrow[0,\infty)$ and $F_{g,q}:\Tspace\rightarrow(0,\infty]$ are
\begin{align}
    &F_{h,p}(t):=\sum_{i=0}^{p-1}\int_{\Omega}|(\nabla^{h})^{i}(u-\nabla^{g} t)|_{h}^{2}d\mu_{h}=\|u-\nabla^{g} t\|^{2}_{H^{p-1}(\Omega)}\;\label{eq:Fhp},\\
&F_{g,q}(t):=
    \begin{cases}
        \int_{\Omega}\frac{1}{|\nabla^{g}t|_{g}^{2q}}d\mu_{h}\quad &\textrm{if} \;t\in\mathcal{T}^{p,t}\\
        +\infty\quad &\textrm{otherwise}\;.
    \end{cases}\label{eq:functional_def}
\end{align}
\end{Def}
Note that we consider gradients which vanish a.e. in $\Omega$ in the definition of the set $\Tspace$ in order to guarantee closedness of this set. Of course, the presented setup and results also hold if $u$ is future-directed timelike after replacing the gradient $\nabla^{g}t$ with $-\nabla^{g}t$ in the definitions of $\Tspace$ and $\Fpq$ (alternatively, one could switch to the convention $(+,-,\ldots,-)$ for the signature of $g$ and simply replace 'past' by 'future' in the above presented constructions).
Moreover, the condition
\begin{align}\label{eq:zero_mean}
    \int_{\Omega}t\;d\mu_{h}=0
\end{align}
satisfied by functions in $\Tspace$ will be referred to as the \emph{zero-mean condition}. The importance of this condition is discussed in Remark \ref{rem:properties}.

Different remarks are in order. In the first place, since $(M,g)$ is stably causal, there exists a `comparator' temporal function $\tau$ in the set $\Tspace$ for which the value of the functional $\Fpq$ will be finite. Secondly, in the following remark we discuss the assumed smoothness of $g$ and $u$ and the existence of a suitable lower regularity class for which the above functionals are still well-defined.
\begin{Remark}\label{rem:optimal_regularity}\rm{
    Is smoothness of $u$ and $g$ necessary for the above functionals to be well-defined? A priori, for $F_{h,p}$ it suffices that the tensor fields $(\nabla^{h})^{i}(u-\nabla^{g}t)$ are square-integrable\footnote{In this remark, $L^{2}$- or $H^{p}$-regularity is always with respect to some background smooth Riemannian metric, not $h$.} for each $i\leq p-1$ whereas for $F_{g,q}$ it suffices that $|\nabla^{g}t|_{g}^{-2q}$ is measurable (as an extended real valued function). In the first place, this clearly holds if $u$ and $g$ are $C^{p-1}$ regular ($\Omega$ is compact, so the derivatives of a $C^{r}(\Omega)$ function are bounded and its pointwise product with a $H^{r}(\Omega)$ function is again in $H^{r}(\Omega)\subset L^{2}(\Omega)$ by the weak product rule). 
    \par On the other hand, if $u$ and $g$ (so also $h$) are only $H^{p}$-regular, this might not be sufficient: given a $H^{p}$-regular vector field $X$ and metric $h$, $\nabla^{h}X$ involves the product of the $H^{p-1}$-regular $h$-Christoffel symbols and the $H^{p}$-regular coefficient of $X$. And since, in general, Sobolev spaces are not algebras under pointwise multiplication, $\nabla^{h}X$ might not be $H^{p-1}$-regular. A sufficient condition is that $p-1>n/2$. In particular, if $m-1>n/2$ and $r\leq m-1$ the multiplication map
    \begin{align*}
        H^{m-1}(\Omega)\times H^{r}(\Omega)\to H^{r}(\Omega),\quad (f,g)\mapsto f\cdot g\;,
    \end{align*}
    is bounded by the multiplication theorems for Sobolev spaces (e.g. \cite[Theorem 5.1]{BehzadanHolst}; alternatively one can use Moser estimates, see \cite[Proposition 3.7]{TaylorIII}). Hence, for $m\geq p$ and $m>n/2+1$ the following map
    \begin{align*}
        (\nabla^{h})^{i} :H^{p-1}(\Omega)\to H^{p-1-i}(\Omega)\subset L^{2}(\Omega),\quad u-\nabla^{g}t\mapsto (\nabla^{h})^{i}(u-\nabla^{g}t)\;,
    \end{align*}
    is well-defined and bounded for all $i\leq p-1$, which implies well-definedness of $F_{h,p}$ and $F_{g,q}$. 
    
    \par However, if $p\in\{1,2\}$, then the previously mentioned $C^{p-1}$ regularity of $g$ and $u$ is sharper since $H^{m}(\Omega)\subset C^{1}(\Omega)$ for $m>n/2+1$ by the Sobolev embedding theorems. Therefore, a convenient sufficient regularity class for $u$ and $g$ that still ensures well-definedness of the above functionals is
    \begin{align}
        \begin{cases}
            &\textrm{If $p\in\{1,2\}$ let $g,u$ be $C^{p-1}$-regular.}\\
            &\textrm{If $p>2$ let $g,u$ be $H^{m}$-regular with $m\geq p$ and $m>n/2+1$.}
         \end{cases}
    \end{align}
    For simplicity in the rest of the paper we nevertheless assume that $g$ and $u$ are smooth.

}\end{Remark}
Moreover, we discuss the similarities of the null-gradient penalizing functional $F_{g,q}$ to an important functional studied in the literature.
\begin{Remark}\rm{
In \cite[Section 3]{Mccann1} the following functional is introduced:
\begin{align}\label{eq:Mccann_functional}
    L(v):=-\frac{1}{r}|v|_{g}^{r}\;,
\end{align}
if $v$ is future-directed causal and $L(v)=\infty$ otherwise, and $r\in(0,1)$. Then, the Legendre dual of $L(v)$ yields an analogous functional on the cotangent bundle but with the exponent $r'=r/(r-1)<0$. Hence, the integrand of $F_{g,q}$ agrees with the Legendre dual up to a multiplicative constant. In \cite[Lemma 3.1]{Mccann1} it is shown that $L$ is convex. The variational derivative (at a gradient $\nabla^{g}f$) of the corresponding integral functional yields an operator referred to as the \emph{$r$-d'Alembertian}. In \cite{octet} convexity of \eqref{eq:Mccann_functional} is used in order to prove ellipticity of this operator, a feature of central relevance, for example, in \cite{Mccann_Ohanyan}. Taking into account the similarities of the above functional (or its Legendre dual) and $F_{g,q}$, it is no surprise that the non-linear term of the interior Euler-Lagrange equation associated to the misalignment functional (cf. Proposition \ref{prop:smooth_minimizer}) seems, at first sight, very close to the $r'$-d'Alembertian. Actually, as will be discussed in Remark \ref{rem:p-dalembertian}, it agrees, up to a constant, with this operator because of our choice of the Riemannian metric \eqref{eq:Riemannian_metric}.

}\end{Remark}

Furthermore, it is worth discussing why the above functionals and sets of functions are introduced in this particular form.

\begin{Remark}\label{rem:properties}\em{Let us motivate and discuss the specific choices made in the definition of the set of functions $\Tspace$ and the functional $\Fpq$:
    \begin{itemize}[leftmargin=2em]
        \item[i)]\underline{On the set $\mathcal{T}^{p}$:}
        \\ The functional $\Fpq$ is minimized over $\Tspace$ instead of $\Tt$ because only the former is norm-closed, a property necessary in order to apply compactness results in Sobolev spaces. Moreover, $\Tspace$ is not a vector space ($-t\not\in\Tspace$), but it is a convex set and thus norm-closedness implies weak-closedness (\cite[Proposition 1.21]{Peypouquet}).
        \item[ii)]\underline{On the functional $F_{h,p}$:}
        \\The functional $F_{h,p}$ describes the misalignment between $\nabla^{g}t$ and the vector field $u$. In particular, since $F_{h,p}(t)=0$ implies that $\|u-\nabla^{g}t\|_{L^{2}}=0$, for $t\in \Tspace$ the functional $F_{h,p}(t)$ vanishes if and only if $u=\nabla^{g}t$ a.e. in $\Omega$. In particular, such a function $t\in\Tspace$ is unique by the zero-mean condition.
        \\ Of course, if $u=\nabla^{g}f$ but the function $f$ does not satisfy the zero-mean condition, there exists an additive reparametrization $t\in\Tspace$ such that $u=\nabla^{g}t$.
        \item[iii)]\underline{On the functional $F_{g,q}$ and the parameter $\alpha$:}
        \\Due to the possibility that the gradient of a minimizer $t\in \Tspace$ of $F_{h,p}$ is null or even vanishes, the functional $F_{g,q}$ penalizes such outcomes. In particular, existence of a temporal function $\tau$ on $(M,g)$ implies that the gradient of a minimizer $t_{\alpha}\in\Tspace$ of $\Fpq$ (with $\alpha>0$) has to be timelike almost everywhere in $\Omega$ since
 \begin{align*}
     F_{g,q}(t_{\alpha})\leq \frac{1}{\alpha}\Fpq(\tau)<\infty\;.
 \end{align*}
        \par In the specific case that $u$ is of gradient form, the minimizer $t_{\alpha}$ of $\Fpq$ will in general not satisfy that $u=\nabla^{g}t_{\alpha}$ for $\alpha>0$. However, the gradient of $t_{\alpha}$ presents an improved average steepness with respect to $u$ (in the sense of \eqref{eq:average_steepness}), which can be upgraded to a global bound in the case $q\to\infty$ (Corollary \ref{cor:pointwise_bound}). 
        \item[iv)]\underline{On the parameters $p$ and $q$:}
        \\ The parameter $p$ plays an important role in order to control the gradient of the limiting function $t\in\Tspace$ of a bounded sequence of functions in $\Tspace$: choosing $p$ large enough, one can use compactness results to extract a convergent subsequence of (sufficiently regular) gradients from a $H^{p}$-bounded sequence of functions.
        \par On the other hand, even if a minimizer exists and is smooth, its gradient may only be a.e. timelike. Choosing $q$ and $p$ large enough yields an upgrade to everywhere timelikeness (and even an a.e. steepness estimate in the $q\to\infty$ limit).        
        \item[v)]\underline{On the zero-mean condition in $\Tspace$:}
        \\ A condition on the functions in $\Tspace$ is crucial in order to guarantee coercivity of $F^{\alpha}_{p,q}$ and uniqueness of minimizers as the functional only depends on weak derivatives of $t\in\Tspace$ (i.e. $\Fpq(t)=\Fpq(t+c)$ for $c\in \R$). A potential alternative condition is
        \begin{align}\label{eq:trace_condition}
            & t|_{\partial\Omega}=\tau|_{\partial\Omega}\;,
        \end{align}
        where the left hand side is well defined by the trace theorem. However, if $u$ is of gradient form, then a function $f$ satisfying $u=\nabla^{g}f$ will in general not fulfill this trace condition (nor can be reparametrized to satisfy it). This motivates our preference for the zero-mean condition \eqref{eq:zero_mean}. 
    \end{itemize}
}\end{Remark}
In addition to the previously mentioned feature of $F_{g,q}$ of allowing to improve a.e. to everywhere timelikeness of the gradient of the minimizer (if $q$ and $p$ are large enough, see Proposition \ref{lem:everywhere_timelike}), there exist further reasons why the choice of a penalty functional is rather subtle. We delve into some of the issues potential alternative penalty functionals present in the rest of this subsection. The following lemma shows the difficulty to construct a functional which satisfies some of the necessary properties in order to use the direct method in the calculus of variations and that vanishes if $u$ is of gradient form. 

\begin{Lemma}
    Let $C^{-}_{x}\subset T_{x}M$ denote the set of past-directed timelike vectors in $T_{x}M$ and $u_{x}\in C^{-}_{x}$. Then, there does not exist a function $F: C^{-}_{x}\to\R$ which satisfies:
    \begin{enumerate}[leftmargin=2em]
        \item[(i)] $F$ is convex
        \item[(ii)] $F$ is non-negative
        \item[(iii)] $F(v)=0$ if and only if $|v|_{g}=|u_{x}|_{g}$.
    \end{enumerate}
\end{Lemma}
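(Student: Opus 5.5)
We argue by contradiction and only use the geometry of the past timelike cone $C^{-}_{x}$, which is an open convex cone in $T_{x}M$. Suppose $F$ satisfies (i)--(iii). By (ii) and (iii), the zero set $Z:=\{v\in C^{-}_{x} : F(v)=0\}$ is exactly the hyperboloid sheet
\begin{align*}
    H_{x}:=\{v\in C^{-}_{x} : |v|_{g}=|u_{x}|_{g}\}\;,
\end{align*}
which is nonempty because it contains $u_{x}$. Convexity and non-negativity of $F$ force $Z$, the set of minimizers, to be convex: for $v,w\in Z$ and $s\in[0,1]$ we get $0\leq F(sv+(1-s)w)\leq sF(v)+(1-s)F(w)=0$. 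The plan is therefore to show that $H_{x}$ is not convex, which contradicts (iii).

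The key step is the reverse triangle inequality, or equivalently the strict concavity of $v\mapsto|v|_{g}=\sqrt{-g(v,v)}$ on $C^{-}_{x}$. Since $\dim T_{x}M=n\geq 2$, we can pick $v\in H_{x}$ not parallel to $u_{x}$; for instance $v=\cosh(\theta)\,u_{x}+\sinh(\theta)\,|u_{x}|_{g}\,e$ with $e$ a unit spacelike vector $g$-orthogonal to $u_{x}$ and $\theta\neq0$. The midpoint $m=\tfrac12(u_{x}+v)$ lies in $C^{-}_{x}$ because the cone is convex. A direct computation gives
\begin{align*}
    -g(m,m)=\tfrac14\big(2|u_{x}|_{g}^{2}-2g(u_{x},v)\big)=\tfrac12|u_{x}|_{g}^{2}(1+\cosh\theta)>|u_{x}|_{g}^{2}\;.
\end{align*}
(Alternatively, one can invoke the strict reverse Cauchy--Schwarz inequality $-g(u_{x},v)>|u_{x}|_{g}|v|_{g}$ for non-parallel timelike vectors.) Hence $|m|_{g}>|u_{x}|_{g}$, so $m\notin H_{x}$ while $u_{x},v\in H_{x}$. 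This means $F(m)>0$, contradicting $F(m)\leq\tfrac12F(u_{x})+\tfrac12F(v)=0$.

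The only point needing care, and the main obstacle, is guaranteeing that a non-parallel second point of $H_{x}$ exists, together with the strict inequality. Both follow from $n\geq2$ and the explicit boost parametrization above, so the argument closes.
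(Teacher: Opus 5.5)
Your proof is correct and takes the same approach as the paper. The paper's argument is that the zero set of a non-negative convex $F$ is convex, while the sheet $\{v\in C^{-}_{x}:|v|_{g}=|u_{x}|_{g}\}$ is not. It shows the latter via the strict reverse triangle inequality applied to a boosted pair of non-collinear vectors; your explicit midpoint computation, starting from $u_{x}$ itself and a boost of it, is a concrete instance of that step.
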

\begin{proof}
    Consider the following subset of $T_{x}M$:
    \begin{align}
        V:=\{v\in C^{-}_{x}:|v|_{g}=|u_{x}|_{g}\}\;.
    \end{align}
    We now show that, if conditions $(i)$ and $(ii)$ are satisfied, then $V$ is a proper subset of the zero set of $F$ on $C^{-}_{x}$, contradicting condition $(iii)$. 
    
    The set $V$ contains at least two non-collinear vectors: given an generalized orthonormal basis $\{e_{i}\}_{i=0}^{n}$ of $T_{x}M$, the vectors $v_{1}:=|u_{x}|_{g}e_{0}$ and $v_{2}:=|u_{x}|_{g}(\cosh{(\varphi)}e_{0}+\sinh{(\varphi)}e_{1})$ are non collinear for $\varphi\neq0$ and belong to $V$. The (strict) reversed triangle inequality applied to two non-collinear $v_{1},v_{2}\in V$ implies that $V$ is not a convex set:
    \begin{align*}
        |\frac{1}{2}v_{1}+\frac{1}{2}v_{2}|_{g}>\frac{1}{2}|v_{1}|_{g}+\frac{1}{2}|v_{2}|_{g}=|u_{x}|_{g} \implies \frac{v_{1}+v_{2}}{2}\not\in V\;.
    \end{align*}
    However, the zero set of $F$ is convex, as follows from conditions $(i)$ and $(ii)$. Let $v_{1}$ and $v_{2}$ belong to the zero set of $F$ and $\lambda\in(0,1)$:
    \begin{align*}
        F(\lambda v_{1}+(1-\lambda)v_{2})\leq \lambda F(v_{1})+(1-\lambda)F({v_{2}})=0\implies F(\lambda v_{1}+(1-\lambda)v_{2})=0\;.
    \end{align*}
    Since $V$ is a non-convex subset of the zero set of $F$, which is convex, it is a proper subset.
\end{proof}
Note that conditions $(i)$ and $(ii)$ play a crucial role in the direct method in calculus of variations (and are satisfied by $F_{g,q}$): convexity plays an important in showing weak lower semicontinuity of the functional (and strict convexity guarantees uniqueness of minimizers), whereas non-negativity ensures that the functional is bounded from below. Nevertheless, the previous lemma doesn't fully rule out existence of a functional satisfying these properties and which vanishes if $u$ is of gradient form. In the following remark the penalty functional $F_{g,q}$ is compared with other potential candidates, showing that most of them present important drawbacks. These examples exploit strict convexity of $|\cdot|_{g}^{-2q}$ on $\Tt$ (see Lemma \ref{lem:props_functional}).
\begin{Example}\label{ex:alternative_functionals}\rm{
Some examples of null-gradient penalizing functionals that fail to satisfy one of the two first conditions of the above lemma are 
\begin{align*}
    &t\in \Tt\mapsto \int_{\Omega}\Big(\frac{1}{|\nabla^{g}t|_{g}^{2q}}-\frac{1}{|u|^{2q}_{g}}\Big)d\mu_{h},\quad t\in\Tspace\setminus\Tt\mapsto+\infty\;,\\
    &t\in \Tt\mapsto \int_{\Omega}\Big(\frac{1}{|\nabla^{g}t|_{g}^{2q}}-\frac{1}{|u|^{2q}_{g}}\Big)^{2}d\mu_{h},\quad t\in\Tspace\setminus\Tt\mapsto+\infty\;,
\end{align*}
where the first functional can be negative and the second one is not convex. 

Alternatively, one could try to find functionals which satisfy conditions $(i)$ and $(ii)$ of the above lemma and that vanish if $u=\nabla^{g}t$. For example,
\begin{align*}
    t\in\Tspace\mapsto\begin{cases}
        0 &\textrm{if} \;t\in\mathcal{T}^{p,t}\\
        +\infty\quad &\textrm{otherwise}\;.
    \end{cases}
\end{align*}
It is clear that a minimizer of a functional which includes such a penalty term would have a timelike a.e. gradient. However, this functional presents an important disadvantage: since it does not provide a quantitative penalty for approaching the lightcone, almost everywhere timelikeness of the gradient of the minimizer cannot be upgraded to everywhere timelikeness. 

Finally, another functional similar to $F_{g,q}$ is the one given by the Bregman divergence or distance of $|\cdot|_{g}^{-2q}$ (see \cite[Section 1.1.3]{ButnariuIusem}). In particular, for $t\in \Tt$ define
\begin{align*}
    F^{B}_{g,q}(t)&:=
        \int_{\Omega}\Big(\frac{1}{|\nabla^{g}t|_{g}^{2q}}-\frac{1}{|u|_{g}^{2q}}-D(|\cdot|^{-2q}_{g})(u)[\nabla^{g}t-u]\Big)d\mu_{h}\\
        &=\int_{\Omega}\Big(\frac{1}{|\nabla^{g}t|_{g}^{2q}}-\frac{1}{|u|_{g}^{2q}}-2q\frac{g(u,\nabla^{g}t-u)}{|u|_{g}^{2q+2}}\Big)d\mu_{h}\;,
\end{align*}
and $F^{B}_{g,q}(t)=\infty$ if $t\in\Tspace\setminus\Tt$. Strict convexity of $|\cdot|_{g}^{-2q}$ guarantees that this functional is non-negative and convex, and it vanishes if and only if $u=\nabla^{g}t$ a.e.. Moreover, the arguments used in Proposition \ref{lem:everywhere_timelike} in order to upgrade almost everywhere to everywhere timelikeness also apply to this functional. However, in this paper the functional $F_{g,q}$ is preferred to $F^{B}_{g,q}$ since using the former, even when $u$ is of gradient form, the gradient of the minimizer $t_{\alpha}$ presents an improved average steepness with respect to $u$.
}\end{Example}

\subsection{Existence and uniqueness of minimizers}\label{subsec:minimizer}
\hfill
\vspace{0.2cm}

\noindent The main results of this subsection (Theorem \ref{theo:main_result_compact} and Corollary \ref{coro:unique_minimizer}) will be proving the existence and uniqueness of a, at least $C^{1,\gamma}$-regular, temporal function which minimizes the misalignment functional. This temporal function, whose gradient is optimally aligned with $u$, will be called the \emph{alignment time function}.

In the following lemma we show that $\Fpq$ satisfies the necessary features in order to apply the direct method in the calculus of variations. Quite remarkably, the null penalty functional $F_{g,q}$ is strictly convex on $\Tt$ because of the Lorentzian signature of $g$ (in particular, due to the reverse triangle inequality).

\begin{Lemma}\label{lem:props_functional}
    Let $p,q\geq1$ be arbitrary. The functionals $F_{h,p}$ and $F_{g,q}$ satisfy the following properties:
    \begin{itemize}[leftmargin=2em]
        \item[i)] $F_{h,p}$ is a strictly convex, continuous and coercive functional.
        \item[ii)] $F_{g,q}$ is convex on $\Tspace$ and strictly convex on $\Tt$.
        \item[iii)] If $p\geq 2$, then $F_{g,q}$ is weakly lower semicontinuous on $\Tt$.
    \end{itemize}
    Hence, for $\alpha=0$, the functional $\Fpq$ is coercive, strictly convex and weakly lower semicontinuous. If $\alpha>0$ and $p\geq2$, then $\Fpq$ enjoys the same properties on $\Tt$.
    
\end{Lemma}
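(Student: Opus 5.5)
For (i), I view $F_{h,p}$ as the composition of the affine map $A:H^{p}(\Omega)\to H^{p-1}(\Omega;TM)$, $t\mapsto u-\nabla^{g}t$, with the squared Hilbert norm $\|\cdot\|^{2}_{H^{p-1}}$. Since $g$ and $h$ are smooth on compact $\Omega$, the linear part $t\mapsto\nabla^{g}t=g^{-1}dt$ is bounded from $H^{p}$ to $H^{p-1}$. Hence $F_{h,p}$ is continuous.

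Convexity is inherited from the convex squared norm. For strict convexity I use that the squared norm is strictly convex, so equality in the convexity inequality forces $\nabla^{g}t_{1}=\nabla^{g}t_{2}$ a.e. Then $t_{1}-t_{2}$ is constant because $\Omega$ is connected, and the zero-mean condition gives $t_{1}=t_{2}$.

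For coercivity I bound
\[\|t\|_{H^{p}}\lesssim\|t\|_{L^{2}}+\|dt\|_{H^{p-1}}\lesssim\|dt\|_{L^{2}}+\|\nabla^{g}t\|_{H^{p-1}}\lesssim \|\nabla^{g}t\|_{H^{p-1}}.\]
The first step uses equivalence of norms for one-forms and vector fields under the smooth isomorphism $g$. The second uses the Poincaré–Wirtinger inequality, valid for zero-mean functions on connected Lipschitz $\Omega$. The triangle inequality then gives $\|t\|_{H^{p}}\lesssim F_{h,p}(t)^{1/2}+\|u\|_{H^{p-1}}$, so $F_{h,p}(t)\to\infty$ as $\|t\|_{H^{p}}\to\infty$.

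For (ii), the main step is pointwise. On past-directed timelike vectors in $T_{x}M$, the map $v\mapsto|v|_{g}$ is concave and positively homogeneous. This follows from the reverse triangle inequality $|\lambda v+(1-\lambda)w|_{g}\ge\lambda|v|_{g}+(1-\lambda)|w|_{g}$, which is strict unless $v,w$ are collinear. The map $s\mapsto s^{-2q}$ is strictly decreasing and strictly convex on $(0,\infty)$, so the composition $|v|_{g}^{-2q}$ is convex.

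To get strict convexity of the integrand, suppose equality holds in the convexity inequality. Strict monotonicity forces equality in the reverse triangle inequality, so $v,w$ are collinear, $w=cv$ with $c>0$. Strict convexity of $s^{-2q}$ then forces $|v|_{g}=|w|_{g}$, hence $v=w$. Integrating over $\Omega$ gives convexity on $\Tt$, where $\Tt$ is convex because the timelike cone is convex. If $F_{g,q}(t_{1})\neq F_{g,q}(t_{2})$ on the functional level, equality forces $\nabla^{g}t_{1}=\nabla^{g}t_{2}$ a.e., and zero mean plus connectedness gives $t_{1}=t_{2}$.

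On all of $\Tspace$, convexity is trivial whenever one endpoint lies outside $\Tt$, since then the right-hand side is $+\infty$. The convex combination of a causal and a timelike vector is timelike, so no further cases arise.

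For (iii), take $t_{k}\rightharpoonup t$ weakly in $H^{p}$ with $p\ge2$. By Rellich–Kondrachov on the Lipschitz domain, $t_{k}\to t$ in $H^{1}$, so $\nabla^{g}t_{k}\to\nabla^{g}t$ in $L^{2}$, and along a subsequence a.e. The subsequence is chosen to realise the $\liminf$ of $F_{g,q}(t_{k})$. Since $v\mapsto|v|_{g}^{-2q}$, extended by $+\infty$ on null or zero vectors, is lower semicontinuous on the closed causal cone and nonnegative, Fatou's lemma yields $F_{g,q}(t)\le\liminf F_{g,q}(t_{k})$. If the $\liminf$ is finite, Fatou also shows that $|\nabla^{g}t|_{g}^{-2q}$ is integrable, so $t\in\Tt$ and no separate closedness argument is needed.

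For the final statement, $\Tspace$ is closed and convex, hence weakly closed. $F_{h,p}$ is convex and continuous, hence weakly lsc by Mazur's lemma. The sum $F_{h,p}+\alpha F_{g,q}$ of a coercive, strictly convex, weakly lsc functional and a nonnegative, convex, weakly lsc one has all three properties.

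The step I expect to be most delicate is the strict convexity in (ii), namely handling the equality case and the domain $\Tspace\setminus\Tt$. The Fatou argument in (iii) is routine by comparison.
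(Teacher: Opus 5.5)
Your proposal is correct and follows the paper's proof essentially step by step: Poincaré--Wirtinger together with the zero-mean condition for coercivity and strict convexity of $F_{h,p}$, the reverse triangle inequality combined with monotonicity and strict convexity of $s\mapsto s^{-2q}$ for (ii), and Rellich--Kondrachov, a.e.\ convergence of the gradients and Fatou's lemma for (iii). The differences are minor. Extending the integrand by $+\infty$ on the closed causal cone gives you weak lower semicontinuity on all of $\Tspace$ and shows directly that the weak limit lies in $\Tt$, which the paper only proves separately in the existence theorem. In (ii), the sentence beginning ``If $F_{g,q}(t_{1})\neq F_{g,q}(t_{2})$'' should read ``if $t_{1}\neq t_{2}$'', and your equality-case argument, like the paper's, tacitly requires both values to be finite.
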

\begin{proof}
In the first place, coercivity of the functional $\Fpq$ follows from coercivity of $F_{h,p}$. Applying the Poincaré-Wirtinger inequality (cf. \cite[Lemma 3.8]{Hebey}) to the functions in $\Tspace$ (so they satisfy the zero-mean condition) gives
\begin{align*}
    \|t\|_{H^{p}(\Omega)}^{2}&=\|t\|_{L^{2}(\Omega)}^{2}+\|\nabla^{h} t\|_{H^{p-1}(\Omega)}^{2}\leq C\|\nabla^{h} t\|_{L^{2}(\Omega)}^{2}+\|\nabla^{h}t\|_{H^{p-1}(\Omega)}^{2}\leq C\|\nabla^{h}t\|_{H^{p-1}(\Omega)}^{2}
\end{align*}
with $C>0$ a constant depending on $\Omega$, $p$ and $h$. Note that, in this proof, constants might change from one inequality to another but, as is customary, will still be denoted with the same letter. Since $\Omega$ is compact and $u$ and $g$ are smooth
\begin{align*}
    \Fpq(t)\geq F_{h,p}(t)\geq C\|\nabla^{g} t\|_{H^{p-1}(\Omega)}^{2}-\|u\|_{H^{p-1}(\Omega)}^{2}\geq C\|\nabla^{h} t\|_{H^{p-1}(\Omega)}^{2}-C'\geq C\|t\|_{H^{p}(\Omega)}^{2}-C'
\end{align*}
where we used that by compactness of $\Omega$ and smoothness of $g$ and $h$ there exists a constant $c$ such that $\|\nabla^{g} t\|_{H^{p-1}(\Omega)}^{2}\geq c\|\nabla^{h} t\|_{H^{p-1}(\Omega)}^{2}$. Finally, $C,C'>0$ depend also on $u$ and $g$. Hence, $\Fpq$ and $F_{h,p}$ are coercive.

Secondly, let us address convexity of $F_{h,p}$ and $F_{g,q}$. Note that for any $t_{1},t_{2}\in \Tspace$, $t_{1}\neq t_{2}$ implies that $\nabla^{g}t_{1}\neq\nabla^{g}t_{2}$ (if $\nabla^{g}t_{1}=\nabla^{g}t_{2}$ a.e., then $t_{1}=t_{2}+C$ a.e. but $C=0$ by the zero mean condition). Strict convexity of the $H^{p-1}(\Omega)$ norm squared entails strict convexity of $F_{h,p}$.

For the proof of convexity of $F_{g,q}$, consider arbitrary $t_{1},t_{2}\in\Tspace$ with $t_{1}\neq t_{2}$. Convexity of $F_{g,q}$ is immediate if the gradient of one of them is null or vanishes on a positive measure subset of $\Omega$. Assume now that both $\nabla^{g}t_{1}$ and $\nabla^{g}t_{2}$ are past-directed timelike a.e.. The reversed triangle inequality gives for any $\lambda\in(0,1)$
\begin{align}\label{eq:reversed_triangle}
    |\lambda\nabla^{g}t_{1}+(1-\lambda)\nabla^{g}t_{2}|_{g}\geq \lambda|\nabla^{g}t_{1}|_{g}+(1-\lambda)|\nabla^{g}t_{2}|_{g}\;,
\end{align}
which yields strict convexity on $\Tt$. First consider the case with $|\nabla^{g}t_{1}|_{g}\neq|\nabla^{g}t_{2}|_{g}$,
\begin{align*}
    F_{g,q}(\lambda t_{1}+(1-\lambda)t_{2})
    &\leq \int_{\Omega}\frac{1}{(\lambda|\nabla^{g}t_{1}|_{g}+(1-\lambda)|\nabla^{g}t_{2}|_{g})^{2q}}d\mu_{h}\\
    &< \lambda\int_{\Omega}\frac{1}{|\nabla^{g}t_{1}|_{g}^{2q}}d\mu_{h} +(1-\lambda)\int\frac{1}{|\nabla^{g}t_{2}|_{g}^{2q}}d\mu_{h}\\
    &=\lambda F_{g,q}(t_{1})+(1-\lambda)F_{g,q}(t_{2})\;,
\end{align*}
where for the second inequality we used that the function $f: (0,\infty)\to(0,\infty), x\mapsto \frac{1}{x^{2q}}$ is strictly convex for any $q\geq1$, so the same holds for $F_{g,q}$ on $\Tt$. On the other hand, if $|\nabla^{g}t_{1}|_{g}=|\nabla^{g}t_{2}|_{g}$ and $\nabla^{g}t_{1}\neq\nabla^{g}t_{2}$ on a positive measure subset, then the two gradients are not collinear on this set and one has a strict inequality in \eqref{eq:reversed_triangle} which gives again strict convexity of $F_{g,q}$.

It remains to prove weak lower semicontinuity of the functionals. Let $i\in\{0,1,\ldots,p-1\}$. The linear map $t\in H^{p}(\Omega)\mapsto (\nabla^{h})^{i}\nabla^{g}t\in L^{2}(\Omega)$ is bounded and thus continuous,
\begin{align*}
    &\|(\nabla^{h})^{i}\nabla^{g}t\|^{2}_{L^{2}(\Omega)}\leq C(\|t\|_{L^{2}(\Omega)}^{2}+\sum_{m=0}^{p-1}\|(\nabla^{h})^{m}\nabla^{h}t\|_{L^{2}(\Omega)}^{2})=C\|t\|_{H^{p}(\Omega)}^{2}\;,
\end{align*}
so also the affine map $\psi_{i} : H^{p}(\Omega)\rightarrow L^{2}(\Omega), t\mapsto (\nabla^{h})^{i}(u-\nabla^{g}t)$ is continuous. Since $F_{h,p}(t)=\sum_{i=0}^{p-1}\|\psi_{i}(t)\|_{L^{2}(\Omega)}^{2}$, also $F_{h,p}$ is continuous and, since it is convex, sequential weak lower semicontinuity follows (see \cite[Proposition 2.17]{Peypouquet}).

Note that, even though the real valued function $t\mapsto|\nabla^{g}t|_{g}^{-2q}$ is continuous on $\Tt$, this does not imply continuity of $F_{g,q}$ on $\Tt$ and hence the previous argument cannot be used here in order to conclude that the functional is weakly lower semicontinuous. For this purpose, the additional assumption that $p\geq 2$ is exploited.  

Consider a sequence $(t_{n})_{n\in\N}$ in $\Tt$ converging weakly in $H^{p}(\Omega)$ to $t_{\ast}\in \Tt$ and a subsequence $(t_{n_{k}})_{k\in\N}$ for which $(F_{g,q}(t_{n_{k}}))_{k\in\N}$ converges to $\liminf_{n\to \infty}F_{g,q}(t_{n})<\infty$ (if it diverges weak lower semicontinuity of $F_{g,q}$ is immediate). 

By the Rellich-Kondrachov theorem $(p\geq2)$ the Sobolev space $H^{p}(\Omega)$ is compactly embedded in $H^{1}(\Omega)$, so (after passing to a further subsequence) $(t_{n_{k}})_{k\in\N}$ converges strongly in $H^{1}(\Omega)$ to $t'_{\ast}\in H^{1}(\Omega)$. From uniqueness of the weak limit (strong $H^{1}(\Omega)$-convergence implies weak $H^{1}(\Omega)$-convergence) it follows that $t_{\ast}'=t_{\ast}$. In particular, $(\nabla^{g}t_{n_{k}})_{k\in\N}$ converges in the $L^{2}(\Omega)$-norm to $\nabla^{g}t_{\ast}$, which (passing again to a subsequence) implies a.e. convergence and so also $(|\nabla^{g}t_{n_{k}}|^{-2q}_{g})_{k\in\N}$ converges to $|\nabla^{g}t_{\ast}|^{-2q}_{g}$ a.e. in $\Omega$ (composition with continuous maps preserves a.e. convergence). Since these are non-negative functions, Fatou's lemma implies weak lower semicontinuity of $F_{g,q}$:
 \begin{align*}
        \liminf_{n\to\infty} F_{g,q}(t_{n})=\liminf_{k\to\infty} F_{g,q}(t_{n_{k}})&\geq\int_{\Omega}\liminf_{k\to\infty}\frac{1}{|\nabla^{g}t_{n_{k}}|_{g}^{2q}}d\mu_{h}=F_{g,q}(t_{\ast})\;.
    \end{align*}
This concludes the proof.

\end{proof}

Note that in the previous lemma it is implicitly used that weak continuity and weak sequential continuity of a convex function on a Banach space are equivalent properties (cf. \cite[Proposition 2.7]{Peypouquet}).

The following theorem is one of the main results of this paper. It proves existence and uniqueness of minimizers of $\Fpq$ for any $\alpha\geq0, q\in\N$ and $p\geq2$.

\begin{Thm}\label{theo:main_result_compact}
    Let $\alpha\geq0$ and $q\in\N$. Then, the following holds
    \begin{itemize}[leftmargin=2em]
        \item[i)] If $\alpha=0$, there exists a unique minimizer $t_{0}\in\Tspace$ of $F^{0}_{p,q}$ for any $p\in\N$.
        \item[ii)] If $\alpha>0$ and $p\geq 1$, there exists a unique minimizer $t_{\alpha}\in \Tt$ of $\Fpq$.
    \end{itemize}
\end{Thm}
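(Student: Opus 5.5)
I will prove both parts with the direct method of the calculus of variations, using Lemma \ref{lem:props_functional}. The preparatory facts are the following. $\Tspace$ is nonempty: stable causality gives a temporal function $\tau$, and subtracting its mean puts it in $\Tspace$. $\Tspace$ is convex, because the past-directed causal cone together with $0$ is convex at each point, and the zero-mean condition is linear. $\Tspace$ is norm-closed, because $H^{p}$-convergence implies $L^{2}$-convergence of $\nabla^{g}t_{n}$, hence a.e. convergence along a subsequence. The closed past causal cone (with $0$) is closed, and the mean is continuous. By \cite[Proposition 1.21]{Peypouquet}, $\Tspace$ is therefore weakly closed.

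\emph{Case $\alpha=0$.} Set $m:=\inf_{\Tspace}F_{h,p}\in[0,\infty)$, which is finite by evaluating at $\tau$. Take a minimizing sequence $(t_{n})$. Coercivity from Lemma \ref{lem:props_functional} i) bounds $\|t_{n}\|_{H^{p}}$. Reflexivity of $H^{p}(\Omega)$ yields a weakly convergent subsequence $t_{n_k}\rightharpoonup t_{0}$, and weak closedness gives $t_{0}\in\Tspace$. Since $F_{h,p}$ is convex and continuous, it is weakly lower semicontinuous, so $F_{h,p}(t_{0})\le m$ and $t_0$ is a minimizer. This argument works for every $p\geq1$ and needs no compact embedding. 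For uniqueness, suppose $t_{0}\ne t_{0}'$ are two minimizers. Then $\frac12(t_0+t_0')\in\Tspace$ by convexity, and strict convexity gives a value below $m$, a contradiction.

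\emph{Case $\alpha>0$.} Now $\inf\Fpq\le\Fpq(\tau)<\infty$. Any $t$ with finite value lies in $\Tt$, so every minimizing sequence lies in $\Tt$. Coercivity of $\Fpq\ge F_{h,p}$ again gives a weakly convergent subsequence with limit $t_\alpha\in\Tspace$. The key step is lower semicontinuity of $F_{g,q}$ along this sequence, including the possibility that the limit leaves $\Tt$.

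For $p\geq2$ I will repeat the argument of Lemma \ref{lem:props_functional} iii) without assuming a priori that $t_\ast\in\Tt$. Rellich--Kondrachov gives $\nabla^g t_{n_k}\to\nabla^g t_\alpha$ in $L^2$ and a.e. The map $v\mapsto|v|_g^{-2q}$, extended by $+\infty$ on null or zero vectors, is lower semicontinuous on the closed past cone. Fatou's lemma then gives $F_{g,q}(t_\alpha)\le\liminf F_{g,q}(t_{n_k})<\infty$, which forces $t_\alpha\in\Tt$.

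For $p=1$ there is no compact embedding, so I will instead use convexity of the integrand directly. The functional $t\mapsto\int_\Omega \Phi(\nabla^g t)\,d\mu_h$, with $\Phi$ convex, nonnegative and lower semicontinuous on $T\Omega$ (set $\Phi=+\infty$ outside the open past cone), is convex and strongly lower semicontinuous on $H^1$ by Fatou along a.e.-convergent subsequences. Hence it is weakly lower semicontinuous by Mazur's lemma. This argument actually covers all $p\ge1$.

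Combining the two terms, $\Fpq(t_\alpha)\le\liminf\Fpq(t_{n_k})$, so $t_\alpha\in\Tt$ is a minimizer. Uniqueness follows exactly as in the case $\alpha=0$. If $t_\alpha\ne t_\alpha'$ are two minimizers, both lie in $\Tt$, and $\Tt$ is convex: the midpoint gradient is timelike a.e., since a sum of past timelike vectors is past timelike. Strict convexity of $F_{h,p}$ then gives a strictly smaller value at the midpoint, a contradiction.

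The main obstacle is the lower semicontinuity of $F_{g,q}$ when the weak limit might a priori fail to lie in $\Tt$, and especially in the case $p=1$. I expect Mazur's lemma together with lower semicontinuity of the extended-valued convex integrand to resolve it.
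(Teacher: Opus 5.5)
Your proposal is correct. For $p\geq 2$ it is essentially the paper's proof: a minimizing sequence, coercivity, weak closedness of $\Tspace$, Rellich--Kondrachov to get a.e.\ convergence of $\nabla^{g}t_{n_k}$, and Fatou's lemma. The paper applies Fatou twice. First it works on the set $A$ where $\nabla^{g}t_\alpha$ is null or zero, to show $t_\alpha\in\Tt$. Then it invokes Lemma \ref{lem:props_functional} iii), which is only stated on $\Tt$. You do both steps at once with the lower semicontinuous extended integrand.

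The real difference is the case $p=1$. The paper's proof explicitly relies on $p\geq 2$ when $\alpha>0$. So the ``$p\geq 1$'' in item ii) is not actually covered there, and the introduction's version of the theorem says $p\geq 2$. The paper also sets aside the convexity route because, as it remarks in the proof of Lemma \ref{lem:props_functional}, $F_{g,q}$ is not continuous. Your observation is that convexity plus \emph{strong} lower semicontinuity already gives weak lower semicontinuity by Mazur. Strong lower semicontinuity follows from Fatou along an a.e.\ convergent subsequence of a strongly $H^{1}$-convergent sequence. This is correct and needs no compact embedding. It works on all of $\Tspace$ with $F_{g,q}=+\infty$ off $\Tt$, and it shows directly that the weak limit lies in $\Tt$. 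Hence your argument proves item ii) for every $p\geq 1$, which is slightly more than the paper's argument establishes. What the compactness route buys the paper is pointwise a.e.\ convergence of the gradients, which it reuses later (e.g.\ in Proposition \ref{prop:q_infinity}); for this theorem it is not needed.

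Two minor points. A minimizing sequence lies in $\Tt$ only from some index on, since finitely many terms may have infinite value. In the uniqueness step you should also invoke the convexity of $F_{g,q}$ (Lemma \ref{lem:props_functional} ii)), alongside the strict convexity of $F_{h,p}$.
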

\begin{proof}
    In the first place, consider the case with $\alpha>0$. Fix an arbitrary $q\in \N$. Existence of a minimizer follows from the direct method of the calculus of variations and the assumption that $p\geq 2$. 
    
    Consider a sequence $(t_{n})_{n\in\N}$ in $\Tspace$ such that $(\Fpq(t_{n}))_{n\in\N}$ converges to 
    \begin{align}
        F_{\inf}:=\inf_{t\in \Tspace} \Fpq(t)<\infty\;.
    \end{align} 
    Since $(\Fpq(t_{n}))_{n\in\N}$ is a bounded sequence, for sufficiently large $n\geq N\in\N$ each gradient $\nabla^{g}t_{n}$ is timelike almost everywhere in $\Omega$. Moreover, coercivity of $\Fpq$ implies that the sequence $(t_{n})_{n\in\N}$ is bounded in $H^{p}(\Omega)$. By the sequential Banach-Alaouglu Theorem ($H^{p}(\Omega)$ is a reflexive Banach space) and weak closedness of $\Tspace$ (cf. Remark \ref{rem:properties}), there exists a subsequence $(t_{n_{k}})_{n_{k}\in\N}$ in $\Tspace$ which converges weakly to a function $t_{\alpha}\in \Tspace$. However, even if $t_{n_{k}}\in \Tt$ for each $n_{k}\geq N$, a priori, $\nabla^{g}t_{\alpha}$ could still be null or vanish a.e. on $\Omega$. Nevertheless, the assumption that $p\geq2$ rules out this possibility. 

    \par Indeed, as in the proof of the previous lemma, by the Rellich-Kondrachov theorem the bounded and weakly convergent subsequence $(t_{n_{k}})_{n_{k}\in\N}$ converges (after passing to a further subsequence) strongly in $H^{1}(\Omega)$ to $t_{\alpha}$ and also $|\nabla^{g}t_{n_{k}}|^{2}_{g}$ converges to $|\nabla^{g}t_{\alpha}|^{2}_{g}$ a.e. in $\Omega$. We now show that this actually implies that $\nabla^{g}t_{\alpha}$ must be timelike a.e. Seeking a contradiction, assume that the subset $A:=\{x\in\Omega:|\nabla^{g}t_{\alpha}|_{g}^{2}(x)=0\}$ is not a null set. Then, $(|\nabla^{g}t_{n_{k}}|^{-2q}_{g})_{k\in\N}$ is a sequence of measurable non-negative functions which diverge to $+\infty$ a.e. in $A$. Using Fatou's lemma
    \begin{align*}
        \liminf_{k\to\infty} F_{g,q}(t_{n_{k}})&\geq\liminf_{k\to\infty} \int_{A}\frac{1}{|\nabla^{g}t_{n_{k}}|_{g}^{2q}}d\mu_{h}\geq\int_{A}\liminf_{k\to\infty}\frac{1}{|\nabla^{g}t_{n_{k}}|_{g}^{2q}}d\mu_{h}=+\infty\;,
    \end{align*}
    which contradicts that $\lim_{k\to\infty}\Fpq(t_{n_{k}})=F_{\textrm{inf}}<\infty$ since $F_{g,q}(t_{n_{k}})\leq \alpha^{-1}\Fpq(t_{n_{k}})$. Hence, $t_{\alpha}\in\Tt$.
    
    Since $t_{n},t_{\alpha}\in \Tt$ for $n\geq N$ and $\Fpq$ is weakly lower semicontinuous on $\Tt$, it follows that $t_{\alpha}$ is a minimizer of the functional (i.e. $F_{\inf}=\Fpq(t_{\alpha})$),
    \begin{align}
        F_{\inf}\leq \Fpq(t_{\alpha})\leq \liminf_{k\to\infty} \Fpq(t_{n_{k}})=F_{\inf}\;.
    \end{align}
    Uniqueness of the minimizer then follows by strict convexity of $\Fpq$ on $\Tt$.

    In the case that $\alpha=0$, existence and uniqueness of a minimizer $t_{0}$ of $\Fpq$ is a direct application of the direct method in the calculus of variations using that $F_{h,p}$ is coercive, strictly convex and weakly lower semicontinuous on the set $\Tspace$ (cf. Lemma \ref{lem:props_functional}). In this case the arguments apply already for $p\geq1$. However, the gradient of the minimizer can be null or even vanish.
\end{proof}
Although temporal functions are time functions, functions with an a.e. past-directed timelike gradient (such as the minimizer $t_{\alpha}$ for $\alpha>0$ and $p\geq 2$) are not, a priori, a.e. steep nor generalized time functions (strictly increasing function along future-directed causal curves which are not necessarily continuous, cf. \cite[Definition 3.48]{Minguzzi_Sanchez}). For example, consider $n$-dimensional Minkowski spacetime (with coordinates $(x^{0},x^{1},\ldots,x^{n-1})$) and the function $f : \R^{1,n}\to \R$ with $f(x^{0},x^{1},\ldots,x^{n-1})=x^{0}$ for all $x\in \R^{1,n}\setminus\{0\}$ and $f(0)>0$. The gradient of $f$ is a.e. past-directed timelike but along the curve $\gamma :\R\to \R^{1,n},\gamma(s):=(s,0)$ the function $f$ is not strictly increasing.

Even if a unique $C^{k}$-regular (with $k\geq1$) minimizer $t_{\alpha}$ of the misalignment functional exists and its $g$-gradient $\nabla^{g}t_{\alpha}$ is timelike almost everywhere, $\nabla^{g}t$ may still be null or vanish on a measure zero subset. As a motivating example, consider the function $f :\R^{n}\to[0,\infty), x\mapsto \|x\|^{2}$, which is positive a.e., and the corresponding functional $F$ given by
\begin{align*}
    F(f):=\int_{\R^{n}}\frac{1}{(f(x))^{q}}d^{n}x\geq \int_{B_{\lambda}(0)}\frac{1}{(f(x))^{q}}d^{n}x\geq C\int_{0}^{\lambda}r^{n-1-2q}dr\;,
\end{align*}
with $\lambda\in(0,\infty)$. In particular, the above functional diverges for $q\geq \frac{n}{2}$. Recall that, in our setting, the value of the functional $\Fpq$ for the minimizer $t_{\alpha}$ is finite. Hence, it is to be expected that if $q$ is chosen large enough, this rules out the possibility that the gradient of the minimizer is null or vanishes on measure-zero subsets of $\Omega$.

The following proposition shows that if a minimizer of $\Fpq$ with $\alpha>0$ is sufficiently smooth and $q$ sufficiently large, then its gradient has to be everywhere timelike in $\Omega$.

\begin{Prp}\label{lem:everywhere_timelike}
    Let $t\in \Tspace$ with
    \begin{align*}
        F_{g,q}(t)=\int_{\Omega}\frac{1}{|\nabla^{g}t|_{g}^{2q}}d\mu_{h}<\infty\;.
    \end{align*}
    The gradient of $t$ is everywhere timelike in $\Omega$ if one of the following conditions holds:
    \begin{itemize}[leftmargin=2em]
        \item[i)] $p>\frac{n}{2}+1+\gamma$ and $q\geq \frac{n}{\gamma}$, with $\gamma\in(0,1)$
        \item[ii)] $p>\frac{n}{2}+2$ and $q\geq n$,
        \item[iii)] $p>\frac{n}{2}+3$, $q\geq \frac{n}{2}$ and there exists an $H^{p}$-regular extension of $t$ to an open subset $\Omega'$ with $\Omega\subset\Omega'$ such that $\nabla^{g}t$ is past-directed causal in $\Omega'$.
    \end{itemize}
\end{Prp}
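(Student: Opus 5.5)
The plan is a contradiction argument combining Sobolev embedding with a local blow-up of the integrand. Set $f:=-g(\nabla^{g}t,\nabla^{g}t)$. For $t\in\Tspace$ we have $f\geq0$ a.e., and $|\nabla^{g}t|_{g}^{2q}=f^{q}$. Suppose, to the contrary, that there is a point $x_{0}\in\Omega$ (possibly on $\partial\Omega$) where $\nabla^{g}t$ is null or vanishes, i.e.\ $f(x_{0})=0$. The goal in each case is an upper bound
\begin{align*}
    f(x)\leq C\, d_{h}(x,x_{0})^{\beta}
\end{align*}
near $x_{0}$ for some $\beta\in(0,2]$. Together with a lower volume estimate for $\Omega\cap B_{r}(x_{0})$, this bound will force $F_{g,q}(t)=\infty$ as soon as $\beta q\geq n$.

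First, the geometric input. Since $\partial\Omega$ is Lipschitz, $\Omega$ satisfies a uniform interior cone condition. Hence for small $r$ the set $\Omega\cap(B_{2r}(x_{0})\setminus B_{r}(x_{0}))$ has $\mu_{h}$-measure at least $c\,r^{n}$, with $c>0$ independent of $r$. Working in a chart around $x_{0}$ and summing over dyadic shells $r=2^{-k}r_{0}$ gives
\begin{align*}
    F_{g,q}(t)\geq\int_{\Omega\cap B_{r_{0}}(x_{0})}\frac{d\mu_{h}}{f^{q}}\geq c\sum_{k}\frac{(2^{-k}r_{0})^{n}}{C^{q}(2^{-k+1}r_{0})^{\beta q}}=\infty \quad\textrm{whenever }\beta q\geq n,
\end{align*}
which is the contradiction. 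Here I use that $g$, $h$ are smooth on the compact set $\Omega$, so constants are uniform.

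Next, I obtain the decay of $f$ in each case from the Sobolev embedding on the Lipschitz domain $\Omega$.
\begin{itemize}[leftmargin=2em]
\item[i)] If $p>\frac{n}{2}+1+\gamma$, then $H^{p}(\Omega)\subset C^{1,\gamma}(\Omega)$. Thus $\nabla^{g}t$ is $\gamma$-Hölder, and so is $f$, being a quadratic expression in $\nabla^{g}t$ with smooth coefficients. Since $f(x_{0})=0$, this gives $f(x)\leq C d(x,x_{0})^{\gamma}$, so $\beta=\gamma$ and $q\geq n/\gamma$ suffices.
\item[ii)] If $p>\frac{n}{2}+2$, then $t\in C^{2}(\Omega)$, so $f$ is Lipschitz. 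This gives $\beta=1$, and $q\geq n$ suffices.
\item[iii)] If $p>\frac{n}{2}+3$, the extension lies in $C^{3}(\Omega')$, so $f\in C^{2}(\Omega')$. By assumption $f\geq0$ on the open set $\Omega'$, so $x_{0}$ is an interior minimum of $f$ in $\Omega'$, even when $x_{0}\in\partial\Omega$. Hence $df(x_{0})=0$, and Taylor's theorem gives $f(x)\leq Cd(x,x_{0})^{2}$. This gives $\beta=2$, and $q\geq n/2$ suffices.
\end{itemize}

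One point needs care: an a.e.\ statement must be turned into a pointwise one. Because $t$ has a continuous representative with continuous gradient, the a.e.\ condition that $\nabla^{g}t$ is past-directed causal or zero passes to every point by continuity, using closedness of the causal cone together with $0$. So $f\geq0$ everywhere in $\Omega$, and the point $x_{0}$ is meaningful.

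I expect the main obstacle to be boundary points $x_{0}\in\partial\Omega$. The volume lower bound there relies on the cone condition, and the embeddings $H^{p}(\Omega)\subset C^{k,\gamma}(\Omega)$ up to the boundary rely on the Lipschitz regularity of $\partial\Omega$, via an extension operator. In case iii) this boundary issue is also exactly why the extension hypothesis is needed: without it, $x_{0}$ might be a boundary minimum with $df(x_{0})\neq0$, and only linear decay would be available.
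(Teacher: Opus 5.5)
Your proposal is correct and follows the paper's proof step by step. It uses the same contradiction at a point $x_0$ where $f=-g(\nabla^{g}t,\nabla^{g}t)$ vanishes, and the same decay exponents $\beta=\gamma,1,2$ from the Sobolev embedding, including the interior-minimum and Taylor argument on $\Omega'$ in case iii). It also uses the Lipschitz boundary in the same way, to obtain a cone of directions at $x_0$ lying inside $\Omega$. The only difference is cosmetic: you sum over dyadic shells, whereas the paper integrates $r^{n-1-\beta q}$ in polar coordinates over $(0,\lambda)\times\Gamma$.
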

\begin{proof}
    For $k\in\N$, $\gamma\in [0,1)$ and $p>\frac{n}{2}+k+\gamma$, it follows by the Sobolev embedding theorem that $t\in C^{k,\gamma}(\Omega)$. Assume that $\nabla^{g}t$ is null or vanishes at $x_{0}\in \Omega$ and consider a chart $(\varphi,U)$ around this point. We define the $C^{k-1,\gamma}$ regular function
    \begin{align}\label{eq:definition_f}
        f:= (-g(\nabla^{g}t,\nabla^{g}t))\circ\varphi^{-1} : \varphi(U')\subset \R^{n} \rightarrow [0,\infty)\;,
    \end{align} 
    where $U':=U\cap \Omega$, $f$ is non-negative because $\nabla^{g}t$ is a.e. timelike in $\Omega$ (and everywhere causal by continuity of $\nabla^{g}t$) and without loss of generality $y_{0}:=\varphi(x_{0})=0$, so $f(0)=0$. 
    \par Since the boundary $\partial\Omega$ is Lipschitz continuous (so, locally, $\Omega$ lies on one side of $\partial\Omega$), for any $x_{0}\in \Omega$ there exists a positive measure set $\Gamma\subseteq \mathbb{S}^{n-1}$ such that, for a sufficiently small $\lambda>0$, $(0,\lambda)\times \Gamma\subset \varphi(U')$. Then, for spherical coordinates $(r,\omega)\in (0,\lambda)\times\Gamma$
    \begin{align}\label{eq:bound_fbeta}
        f(r,\omega)\leq C r^{\beta} \implies \frac{1}{f(r,\omega)}\geq \frac{1}{Cr^{\beta}}\;\textrm{a.e. in}\; (0,\lambda)\times \Gamma\;,
    \end{align}
    where the inequality on the right-hand side is well-defined a.e. since $f$ can vanish at most on a null set. Then \eqref{eq:bound_fbeta} gives
    \begin{align*}
        &\int_{\varphi(U')}\frac{1}{(f(y))^{q}}d\mu_{h}\geq C\int_{(0,\lambda)\times \Gamma}\frac{1}{(f(y))^{q}}d^{n}y\geq C\int_{0}^{\lambda}\frac{r^{n-1}}{r^{\beta q}}dr
    \end{align*}
    where we used that $d\mu_{h}=\rho(y) d^{n}y\geq C d^{n}y$ in $\varphi(U')$ with $d^{n}y$ the Lebesgue measure on $\R^{n}$, $\rho$ positive and smooth and $C>0$ (note that in this proof constants change but are denoted with the same symbol). In particular, the above expression diverges if the exponent is smaller or equal to $-1$, i.e. whenever
    \begin{align}
        q\geq n/\beta\;.
    \end{align}
    Expression \eqref{eq:bound_fbeta} is satisfied in the different mentioned cases:
\begin{itemize}[leftmargin=2em]
    \item[(i)] If $p>n/2+1+\gamma$ with $\gamma\in(0,1)$, then $f\in C^{0,\gamma}(\varphi(U'))$. Hence,
    \begin{align}\label{eq:bound_f0}
        f(y)\leq C\|y\|^{\gamma}= Cr^{\gamma},\quad\textrm{with}\quad C\in(0,\infty)\;,
    \end{align}
    i.e. $f$ satisfies \eqref{eq:bound_fbeta} with $\beta=\gamma$ and the integral diverges for $q\geq n/\gamma$.
    \item[(ii)] If $p> n/2+2$, then $f$ is in particular locally Lipschitz continuous and expression \eqref{eq:bound_f0} with $\gamma=1$ holds. Hence, in this case the integral diverges\footnote{Alternatively a $C^{1,\gamma}$-extension of $f$ from $\varphi(U')$ to a neighbourhood of $y_{0}$ (since $f\in C^{1,\gamma}(\varphi(U'))$ and $\partial \Omega$ is Lipschitz, cf. \cite[Theorem 4.1]{Rychkov}) and a Taylor expansion of $f$ yield the same value of $\beta$.} for $q\geq n$.
\item[(iii)] Let $p>n/2+3$, $\nabla^{g}t$ be past-directed causal in an open subset $\Omega'$, with $\Omega\subset\Omega'$ and $U'$ a neighbourhood around $x_{0}$ with $U'\subset \Omega'$ and consider again the function $f$ given by \eqref{eq:definition_f}. Then, $y_{0}:=\varphi(x_{0})$ is a local minimum and $Df(y_{0})=0$, since $f$ is non-negative in $\varphi(U')$ ($\nabla^{g}t$ is causal in $\Omega'$), also if $x_{0}\in\partial\Omega$. The Taylor expansion of $f$ around $y_{0}=0$ yields
\begin{align*}
    f(y)&=\sum_{|\alpha|=2}\frac{1}{\alpha!}\frac{\partial^{\alpha}f}{\partial y^{\alpha}}(0)y^{\alpha}+R(y),\quad \textrm{with}\quad R(y)=o(\|y\|^{2}) \quad \textrm{as} \;y\to0\;.
\end{align*}
In spherical coordinates, for sufficiently small $\lambda>0$ and $\Gamma\subset \mathbb{S}^{n-1}$, 
\begin{align*}
    f(r,\omega) \leq Cr^{2}, \quad (r,\omega)\in(0,\lambda)\times \Gamma\;.
\end{align*}
Hence, $\beta=2$ and the integral diverges for $q\geq n/2$.

\end{itemize}

\end{proof}

In the following corollary we collect the different results obtained in this section. It implies that, choosing a sufficiently large Sobolev index $p$ and null penalizing index $q$, there exists a $C^{1,\gamma}$-regular temporal function which minimizes the alignment time function. We refer to this temporal function as the alignment time function.

\begin{Corollary}\label{coro:unique_minimizer} 
Let $\alpha\geq0$, $p,q\in \N$ and $\gamma\in(0,1)$. Assume that one of the following conditions is satisfied:
\begin{itemize}
    \item[i)] $\alpha>0$, $p>n/2+1+\gamma$ and $q\geq n/\gamma$.
    \item[ii)] $\alpha=0$ and $u$ is of $g$-gradient form.
\end{itemize}
Then, the minimizer $t_{\alpha}$ of the misalignment functional is a $C^{1,\gamma}(\Omega)$-regular temporal function and in the second case we have that $\nabla^{g}t_{0}=u$.
\begin{proof}
    That the gradient of the minimizer is everywhere past-directed timelike in $\Omega$, follows, for case $i)$, by Proposition \ref{lem:everywhere_timelike}, part $i)$.

    With respect to the second case, if $u$ is of $g$-gradient form, then there exists a smooth function $t\in\Tspace$ (cf. Remark \ref{rem:properties}, item ii)) such that $u=\nabla^{g}t$ and which minimizes $F_{h,p}=F_{p,q}^{0}$. Uniqueness of the minimizer implies that $t=t_{0}$, so $\nabla^{g}t_{0}$ is everywhere timelike. 
\end{proof}
\end{Corollary}
Of course, if desired, one can obtain everywhere timelikeness of the gradient of the minimizer for a lower value of $q$ by increasing the value of $p$ (recall cases $ii)$ and $iii)$ in Proposition \ref{lem:everywhere_timelike}).

\subsection{Elliptic regularity bootstrapping}\label{sec:elliptic_regularity}\hfill
\hfill
 \vspace{0.2cm}

In this subsection, the $C^{1,\gamma}$ regularity of the alignment time function is upgraded to smoothness. More specifically, the following proposition entails that, under the choice of $p$ and $q$ made at the end of the previous subsection (which guaranteed everywhere timelikeness of the gradient of the minimizer) the interior Euler-Lagrange equation associated to the variational problem are well-defined, uniformly elliptic and, in conclusion, the minimizer is actually smooth. Choosing the indices $p$ and $q$ large enough is crucial in order to ensure that the minimizer $t_{\alpha}$ is an interior point of the set $\Tspace$ (viewing $\Tspace$ as a subset of the set of functions in $H^{p}(\Omega)$ which satisfy the zero-mean condition) and thus that the Gateaux derivative of $F_{g,q}$ is well-defined. Note that this problem (Gateaux differentiability of a functional involving the Lorentzian norm on the set of past-directed causal vector fields) was already noted in \cite{octet} and is the reason that, in their work, they also consider one-sided variational derivatives.

\begin{Prp}\label{prop:smooth_minimizer}
    Let $\gamma\in(0,1)$, $q\geq n/\gamma$, $p> \frac{n}{2}+1+\gamma$ and $\alpha>0$. Then, the minimizer $t_{\alpha}\in \Tt$ of $\Fpq$ is a weak solution to the following Euler-Lagrange equation 
    \begin{align}\label{eq:strong_EL}
    \sum_{i=0}^{p-1}\nabla^{g\ast}(\nabla^{h\ast})^{i}(\nabla^{h})^{i}(\nabla^{g} t_{\alpha}-u)-\alpha q\;\div_{h}\Big(\frac{\nabla^{g} t_{\alpha}}{|\nabla^{g} t_{\alpha}|_{g}^{2q+2}}\Big)=0\;.
    \end{align}
    Moreover, it is a uniformly elliptic quasi-linear partial differential equation of order $2p$ and $t_{\alpha}\in C^{\infty}(\Omega^{\circ})$.
\end{Prp}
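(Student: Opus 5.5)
By Corollary \ref{coro:unique_minimizer}, case i), and Proposition \ref{lem:everywhere_timelike}, part i), the minimizer $t_{\alpha}$ is $C^{1,\gamma}(\Omega)$ and $\nabla^{g}t_{\alpha}$ is everywhere past-directed timelike. Since $\Omega$ is compact, $|\nabla^{g}t_{\alpha}|_{g}\geq c>0$ on $\Omega$, and $\nabla^{g}t_{\alpha}$ stays a uniform distance away from the lightcone.

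\textbf{Step 1: Gateaux differentiability.} Fix a test function $\varphi\in C^{\infty}_{c}(\Omega^{\circ})$. After subtracting its mean, I may assume $\varphi$ satisfies the zero-mean condition (the functional only sees gradients). Because $p>n/2+1$, we have $H^{p}\hookrightarrow C^{1}$, so $\|\nabla^{g}\varphi\|_{C^{0}}<\infty$. Hence for $|s|$ small, $\nabla^{g}(t_{\alpha}+s\varphi)$ is still past-directed timelike with $|\cdot|_{g}\geq c/2$. So $t_{\alpha}+s\varphi\in\Tt$, i.e.\ $t_{\alpha}$ is an interior point in these directions. The map $s\mapsto\Fpq(t_{\alpha}+s\varphi)$ is then finite near $0$ and minimized at $s=0$. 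Differentiating under the integral is justified by dominated convergence: the integrand $|\nabla^{g}t|_{g}^{-2q}$ is smooth in $\nabla^{g}t$ on $\{|v|_{g}\geq c/2\}$, and all terms are uniformly bounded. Using $D(|v|_{g}^{-2q})[w]=2q\,g(v,w)/|v|_{g}^{2q+2}$ (recall $|v|_{g}^{2}=-g(v,v)$), I get
\begin{align*}
0=\sum_{i=0}^{p-1}2\int_{\Omega}h\big((\nabla^{h})^{i}(\nabla^{g}t_{\alpha}-u),(\nabla^{h})^{i}\nabla^{g}\varphi\big)d\mu_{h}+2\alpha q\int_{\Omega}\frac{g(\nabla^{g}t_{\alpha},\nabla^{g}\varphi)}{|\nabla^{g}t_{\alpha}|_{g}^{2q+2}}d\mu_{h}.
\end{align*}
The second integrand equals $d\varphi(\nabla^{g}t_{\alpha})/|\nabla^{g}t_{\alpha}|_{g}^{2q+2}$. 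Integrating by parts with respect to $\mu_{h}$ gives $-\div_{h}$, and the formal adjoints in the first term give \eqref{eq:strong_EL} in weak form. The constant factor $2$ is dropped.

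\textbf{Step 2: Ellipticity.} The principal part of order $2p$ comes only from the $i=p-1$ term, $\nabla^{g\ast}(\nabla^{h\ast})^{p-1}(\nabla^{h})^{p-1}\nabla^{g}$. Its principal symbol at a covector $\xi$ is $|\xi|_{h}^{2(p-1)}\,|\xi^{\sharp_{g}}|_{h}^{2}$. Since $g$ and $h$ are both nondegenerate and smooth on compact $\Omega$, this is bounded below by $c|\xi|_{h}^{2p}$. The term with $i=p-1$ is linear with smooth coefficients. The nonlinear $\alpha$-term is of second order only (it is quasilinear and depends smoothly on $\nabla t$ on the open set where the gradient is timelike), so for $p\geq 2$ it is lower order. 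Thus the equation is uniformly elliptic of order $2p$, and is quasilinear because the top-order part is in fact linear.

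\textbf{Step 3: Regularity bootstrap.} This is the main obstacle. I rewrite the equation as $L t_{\alpha}=G(x,\nabla t_{\alpha},\nabla^{2}t_{\alpha})+(\text{terms in }u)$, where $L$ is the linear elliptic operator of order $2p$ with smooth coefficients and $G$ is smooth in its arguments near the timelike gradient. Since $t_{\alpha}\in H^{p}$ and $p\geq 3$ in the relevant range, $\nabla^{2}t_{\alpha}\in H^{p-2}$. Expanding the divergence, $G$ is a smooth function of $\nabla t$ times $\nabla^{2}t$, plus smooth functions of $\nabla t$. Because $\nabla t\in H^{p-1}$ with $p-1>n/2$ (an algebra), the Moser/multiplication estimates from Remark \ref{rem:optimal_regularity} give $G\in H^{p-2}_{loc}$. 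Interior elliptic regularity for weak solutions of $L$ then yields $t_{\alpha}\in H^{3p-2}_{loc}$, a gain of $2p-2\geq 2$ derivatives. Iterating (each time $G\in H^{k-2}_{loc}$ gives $t_{\alpha}\in H^{k+2p-2}_{loc}$) gives $t_{\alpha}\in H^{k}_{loc}$ for all $k$. Sobolev embedding then gives $t_{\alpha}\in C^{\infty}(\Omega^{\circ})$.

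The delicate points are the initial step, namely interior regularity for a weak $H^{p}$ solution of an order-$2p$ operator with right-hand side in $H^{p-2}$ (I would use difference quotients or standard Agmon–Douglis–Nirenberg interior estimates), and keeping the lower bound $|\nabla^{g}t_{\alpha}|_{g}\geq c$ so that $G$ stays smooth throughout.
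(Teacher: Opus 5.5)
Your proposal is correct and follows essentially the same route as the paper. Both arguments:
\begin{itemize}
\item use everywhere-timelikeness to make $t_{\alpha}$ an interior point, and derive the weak Euler--Lagrange equation from mean-subtracted compactly supported test functions;
\item obtain uniform ellipticity from the principal symbol $|\xi|_{h}^{2p-2}|\xi^{\sharp,g}|_{h}^{2}$ of the $i=p-1$ term;
\item bootstrap by moving the second-order nonlinear term into an $H^{p-2}$ right-hand side, using Sobolev multiplication with $p-1>n/2$.
\end{itemize}
The only cosmetic difference is in that last step. You expand the divergence into products of smooth functions of $\nabla t_{\alpha}$ with $\nabla^{2}t_{\alpha}$, whereas the paper puts the vector field $|\nabla^{g}t_{\alpha}|_{g}^{-2q-2}\nabla^{g}t_{\alpha}$ in $H^{p-1}$ and then takes its divergence.
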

\begin{proof}
In order to determine the Euler-Lagrange equation, one has to compute the Gateaux derivative of $\Fpq$ in those directions $\eta$ for which $t_{\alpha}+s\eta$ is in the set $\Tspace$ for sufficiently small $|s|\in[0,\infty)$. In particular, this holds for any
$\eta\in H^{p}(\Omega)\supset \Tspace$ which satisfies the zero-mean condition since the gradient of the minimizer $t_{\alpha}$ is everywhere timelike and thus $t_{\alpha}$ is an interior point of $\Tspace$. Then, for any such direction $\eta$ the following variational equality holds
\begin{align}\label{eq:variational_eq}
    D\Fpq(t_{\alpha})[\eta]=0\;.
\end{align}
However, in order to view $t_{\alpha}$ as the weak interior solution to a partial differential equation, it is convenient to actually consider arbitrary test functions $\varphi\in C^{\infty}_{0}(\Omega^{\circ})$ (with $\Omega^{\circ}$ the topological interior of $\Omega$). This is possible, since for any such $\varphi$, the function $\eta_{\varphi}$ defined by
\begin{align}
    \eta_{\varphi}:=\varphi-\frac{1}{\mu_{h}(\Omega)}\int_{\Omega}\varphi \;d\mu_{h}\;,
\end{align}
satisfies the zero-mean condition and, as $\nabla^{g}\varphi=\nabla^{g}\eta_{\varphi}$, equation \eqref{eq:variational_eq} also holds for any $\varphi\in C^{\infty}_{0}(\Omega^{\circ})$. Thus we consider such test functions when computing the Gateaux derivatives.

The Gateaux derivative of $F_{h,p}$ in the direction of $\varphi\in C^{\infty}_{0}(\Omega^{\circ})$ is 
\begin{align}\label{eq:Gateaux_diff_Fhp}
    DF_{h,p}(t_{\alpha})[\varphi]&=\frac{d}{ds}\Big(\sum_{i=0}^{p-1}\int_{\Omega}h\big((\nabla^{h})^{i}(u-\nabla^{g} (t_{\alpha}+s\varphi),(\nabla^{h})^{i}(u-\nabla^{g} (t_{\alpha}+s\varphi)\big)\big)\Big|_{s=0}d\mu_{h} \nonumber\\
    &=-2\sum_{i=0}^{p-1}\int_{\Omega} h((\nabla^{h})^{i}\nabla^{g}\varphi,(\nabla^{h})^{i}(u-\nabla^{g} t_{\alpha}))d\mu_{h}\;,
\end{align}
which, integrating by parts, can also be rewritten in the distributional sense as
\begin{align}\label{eq:weak_EL}
        DF_{h,p}(t_{\alpha})[\varphi]=-2\sum_{i=0}^{p-1}\int_{\Omega} \varphi\;\nabla^{g\ast}(\nabla^{h\ast})^{i}(\nabla^{h})^{i}(u-\nabla^{g} t_{\alpha})d\mu_{h}\;,
\end{align}
where $\nabla^{g\ast}$ and $\nabla^{h\ast}$ denote the adjoints of $\nabla^{g}$ and $\nabla^{h}$ with respect to the Riemannian $h$-scalar product (cf. \cite[Section 2.2.2.1]{Petersen}). We now proceed analogously for $F_{g,q}$,
\begin{align}\label{eq:Lorentzian_functional}
    DF_{g,q}(t_{\alpha})[\varphi]&=2q\int_{\Omega}\frac{g(\nabla^{g} t_{\alpha},\nabla^{g} \varphi)}{|\nabla^{g} t_{\alpha}|_{g}^{2q+2}}d\mu_{h}=2q\int_{\Omega}\frac{h(\nabla^{g}t_{\alpha},\nabla^{h}\varphi)}{|\nabla^{g} t_{\alpha}|_{g}^{2q+2}}d\mu_{h}\nonumber\\
    &=-2q\int_{\Omega}\varphi\;\div_{h}\Big(\frac{\nabla^{g} t_{\alpha}}{|\nabla^{g} t_{\alpha}|_{g}^{2q+2}}\Big) d\mu_{h}\;,
\end{align}
where in the second step we used that $g(\nabla^{g}\varphi,X)=X(\varphi)=h(\nabla^{h}\varphi,X)$, with $X:=\nabla^{g}t_{\alpha}\in\Gamma(\Omega)$, and $\div_{h}(Y)=-(\nabla^{h})^{\ast}Y$ for $Y\in \Gamma(\Omega)$. Moreover, $|\nabla^{g}t_{\alpha}|_{g}^{-2q-2}$ is integrable because $\nabla^{g}t_{\alpha}$ is everywhere timelike and, since $\Omega$ is compact, the norm $|\nabla^{g}t_{\alpha}|^{2}_{g}$ can be bounded away from zero. Hence, for any $\varphi\in C^{\infty}_{0}(\Omega^{\circ})$ the following weak Euler-Lagrange equation holds:
\begin{align}
    \int_{\Omega} \varphi\Big(\sum_{i=0}^{p-1}\nabla^{g\ast}(\nabla^{h\ast})^{i}(\nabla^{h})^{i}(\nabla^{g} t_{\alpha}-u)-\alpha q\;\div_{h}\Big(\frac{\nabla^{g} t_{\alpha}}{|\nabla^{g} t_{\alpha}|_{g}^{2q+2}}\Big)\Big) d\mu_{h}=0\;.
\end{align}
I.e. $t_{\alpha}$ is a weak solution to the following quasi-linear partial differential equation
\begin{align}\label{eq:PDE}
    \sum_{i=0}^{p-1}L_{h,i}t_{\alpha}-\alpha qL_{g}t_{\alpha}=f(u)\;,
\end{align}
in $\Omega^{\circ}$, where $L_{h,i}$ and $L_{g}$ are differential operators acting on the minimizer $t_{\alpha}$ and $f(u)$ a smooth function depending on the vector field $u$
\begin{align}
    &\begin{cases}
        L_{h,i}t:=D_{i}^{\ast}D_{i}t\\
        L_{g}t:=\div_{h}(|\nabla^{g}t|_{g}^{-2q-2}\nabla^{g}t)
    \end{cases}
    &f(u):=\sum_{i=0}^{p-1}\nabla^{g\ast}(\nabla^{h\ast})^{i}(\nabla^{h})^{i}u
\end{align}
with $D_{i}:=(\nabla^{h})^{i}\nabla^{g}$. The above PDE has order $2p$ and $L_{h,p-1}=D^{\ast}_{p-1}D_{p-1}$ is its higher order term ($L_{g}$ is only of second order). In local coordinates, $L_{h,p-1}$ is
\begin{align}\label{eq:higher_order_operator}
    &(D_{p-1}t)^{i}_{j_{1}\ldots j_{p-1}}=g^{ik}\partial_{j_{1}}\ldots \partial_{j_{p-1}}\partial_{k}t +\textrm{(lower order terms)}\nonumber
     \\
     &L_{h,p-1}t\!=\!(-1)^{p}g^{lk}g^{mi}h_{li}h^{i_{1}j_{1}}\ldots h^{i_{p-1}j_{p-1}} \partial_{k}\partial_{m}\partial_{i_{1}}\ldots \partial_{i_{p-1}}\partial_{j_{1}} \ldots \partial_{j_{p-1}}t +\textrm{(l.o.t)}
\end{align}
where the adjoint $D_{p-1}^{\ast}$ was computed using local coordinates and integrating by parts $p$-times against a function $\varphi\in C^{\infty}_{0}(\Omega)$:
\begin{align*}
    &\int_{\Omega}\varphi\;L_{h,p-1}t\;d\mu_{h}=\int_{\Omega}\varphi\;D_{p-1}^{\ast}D_{p-1}t\;d\mu_{h}=\int_{\Omega}h(D_{p-1}t,D_{p-1}\varphi)\;d\mu_{h}\\
    &=\int_{\Omega} h_{li}h^{i_{1}j_{1}}\ldots h^{i_{p-1}j_{p-1}} (D_{p-1}t)^{l}_{i_{1} \ldots i_{p-1}}g^{ik}\partial_{j_{1}} \ldots \partial_{j_{p-1}}\partial_{k}\varphi\;d\mu_{h} +\textrm{(l.o.t.)}\\
    &=(-1)^{p}\int_{\Omega} \varphi\;h_{li}g^{ik}h^{i_{1}j_{1}}\ldots h^{i_{p-1}j_{p-1}} \partial_{j_{1}} \ldots \partial_{j_{p-1}}\partial_{k}(D_{p-1}t)^{l}_{i_{1} \ldots i_{p-1}}\;d\mu_{h} +\textrm{(l.o.t.)}\;.
\end{align*}
From expression \eqref{eq:higher_order_operator} the principal symbol (using the conventions in \cite[Section 7A]{Folland}) for the Euler-Lagrange equation satisfies that
\begin{align*}
    (-1)^{p}\sigma_{2p}(L_{h,p-1})(\xi)&=g^{lk}g^{mi}h_{li}h^{i_{1}j_{1}}\ldots h^{i_{p-1}j_{p-1}} \xi_{k}\xi_{m}\xi_{i_{1}}\ldots \xi_{i_{p-1}}\xi_{j_{1}} \ldots \xi_{j_{p-1}}\\
    &=|\xi|_{h}^{2p-2}h(\xi^{\sharp,g},\xi^{\sharp,g})\geq C|\xi|_{h}^{2p}\;.
\end{align*}
with $\xi\in T_{x}^{\ast}\Omega$, $(\xi^{\sharp,g})^{i}:=g^{ij}\xi_{j}$ and, as $h_{x}(\xi^{\sharp,g}(x),\xi^{\sharp,g}(x))=(g^{lk}g^{mi}h_{li}\xi_{k}\xi_{m})(x)$ is positive definite on $T^{\ast}_{x}\Omega$ (for each $x\in \Omega^{\circ}$) there exists a constant $C(x)>0$ such that $h(\xi^{\sharp,g},\xi^{\sharp,g})(x)\geq C(x)|\xi|_{h}^{2}(x)$ and which can be chosen independent of the base-point $x\in \Omega$ by compactness of $\Omega$. Thus, the Euler-Lagrange equation is uniformly elliptic. In order to apply interior regularity theory, we rewrite the elliptic equation as
\begin{align*}
    Lt_{\alpha}:=\sum_{i=0}^{p-1} L_{h,i}t_{\alpha}=\tilde{f}\;,
\end{align*}
where $\tilde{f}:=f(u)+\alpha qL_{g}t_{\alpha}$. In particular, since $\nabla^{g}t_{\alpha}$ is everywhere timelike in the compact set $\Omega$ and $p-1>n/2$ (so the Sobolev multiplication theorems apply), it follows that $|\nabla^{g}t_{\alpha}|_{g}^{-2q-2}\nabla^{g}t_{\alpha}\in H^{p-1}(\Omega)$ and so $\tilde{f}\in H^{p-2}(\Omega)$. Since $L$ is a uniformly elliptic operator of order $2p$ and $Lt_{\alpha}\in H^{p-2}(\Omega)$, the $L^{2}$ based interior elliptic regularity theory implies that $t_{\alpha}\in H_{\textrm{loc}}^{3p-2}(\Omega)$ (cf. \cite[Theorem 3.2]{Lions_Magenes}). An iteration of this argument and the Sobolev embedding theorems yield interior smoothness of the minimizer $t_{\alpha}$ because, for arbitrary $r\in\N$, if $Lt_{\alpha}\in H^{r-2}_{\textrm{loc}}(\Omega)$, then $t_{\alpha}\in H_{\textrm{loc}}^{2p+r-2}(\Omega)$.

 \end{proof}

\begin{Remark}\label{rem:p-dalembertian}\rm{
In \cite{octet} the $p$-d'Alembertian (defined earlier in \cite{Mondino_Suhr} under the name of \emph{$q$-Box operator}) is defined as the following operator
\begin{align}
    \Box_{p}t:=-\div_{g}(|\nabla^{g}t|_{g}^{p-2}\nabla^{g}t)\;.
\end{align}
Since our Riemannian metric $h$ is given by \eqref{eq:Riemannian_metric}, \cite[Corollary 2.4]{Olea} implies that $\div_{g}(X)=\div_{h}(X)$ for any $X\in \Gamma(TM)$. Thus, the non-linear term appearing in the interior Euler-Lagrange equation is, up to a multiplicative constant, the $r$-d'Alembertian with $r:=-2q<0$.
}\end{Remark}

It is clear that, by the elliptic regularity arguments used in the previous proposition, also for $\alpha=0$ the minimizer $t_{0}$ is smooth. Moreover, in the previous proposition we get smoothness of the minimizer in the interior of $\Omega$ but not up to the boundary since we have not prescribed boundary conditions (and, as discussed in Remark \ref{rem:properties} v), it is preferable for our construction to not use boundary conditions). Furthermore, that the gradient of the minimizer of $\Fpq$ is timelike everywhere plays a very important role in proving its smoothness since, if this were not the case, we would not obtain an Euler-Lagrange equation but rather a differential inequality.

Whereas existence and uniqueness of a minimizer only requires $p\geq 2$, the derivation of the Euler-Lagrange equation relies on $p>n/2+1$ and $q\geq n/\gamma$ (with $\gamma\in(0,1)$). The reason is that these conditions guarantee that the minimizer is an interior point of the constrained set $\Tspace$. Nevertheless, it would be interesting to analyze whether it is possible to obtain the Euler-Lagrange equation for a lower value of $p$. Note that this would still require that the minimizer $t_{\alpha}$ is an interior point of $\Tspace$ and integrability of the problematic term
\begin{align*}
    \nabla^{g}t_{\alpha}|\nabla^{g}t_{\alpha}|_{g}^{-2-2q}\,.
\end{align*}

Even though the Euler-Lagrange equation has a rather complicated form it can still be used to construct interior minimizers of the misalignment functional (meaning that its first Gateaux derivative in the direction of an arbitrary $\varphi\in C^{\infty}_{0}(\Omega^{\circ})$ vanishes) for specific simple examples of spacetimes and vector fields. In particular, it can be used to verify whether, for such examples, a specific ansatz for the alignment time function is an interior minimizer or not.

\begin{Example}\label{ex:example_Minkowski}
\rm{
    Consider $n$-dimensional Minkowski spacetime $(\R^{1,n},\eta)$ with coordinates $(x^{0},x^{1},\ldots,x^{n-1})\in\R^{n}$, a compact subset $\Omega\subset\R\times( \R^{n-1}\setminus\{0\})$ and the following smooth past-directed timelike vector field
    \begin{align*}
        u=-f(r)\partial_{x^{0}}\;,\quad\textrm{with}\quad r:=\sqrt{(x^{1})^{2}+\ldots+(x^{n-1})^{2}}\;,
    \end{align*}
    where $f: (0,\infty)\to(0,\infty)$ is smooth. Unless $f$ is constant, $u$ is not of gradient form:
    \begin{align*}
        &u^{\flat}=f(r)dx^{0},\quad d(u^{\flat})=f'(r)dr\wedge dx^{0}\;.
    \end{align*}
    Hence, $u^{\flat}$ is closed if and only if $f$ is constant. It follows that, for non-constant $f$, $u^{\flat}$ cannot be of gradient form. 
    \par On the other hand, the Riemannian metric $h$ is simply the Euclidean metric $\delta_{\mathbb{R}^{n}}$
    \begin{align*}
        h=\eta+2\frac{u^{\flat}\otimes u^{\flat}}{|u|^{2}_{\eta}}=dx^{0}\otimes dx^{0}+\sum_{i=1}^{n-1}dx^{i}\otimes dx^{i}=\delta_{\mathbb{R}^{n}}\implies d\mu_{h}=d^{n}x\;.
    \end{align*}
    Consider the following ansatz for the gradient of the alignment time function
    \begin{align*}
        \nabla^{\eta}t_{c}=-a(r)\partial_{x^{0}}\;,
    \end{align*}
    with $a : (0,\infty)\to(0,\infty)$ smooth. In particular, it follows that $a$ must actually be constant (since $\nabla^{\eta}t_{c}$ has no radial component, $\partial_{r}t_{c}=0$ and thus $\partial_{x^{0}}t_{c}=a(r)$ is actually $r$-independent). So, the considered time function is:
    \begin{align}\label{eq:ansatz}
        t_{c}=cx^{0}+C_{c}\;,\quad\textrm{with} \quad C_{c}=-\frac{c}{\mu_{h}(\Omega)}\int_{\Omega}x^{0} \;d^{n}x\;,
    \end{align}
    with $c$ and $C_{c}$ constants and the latter guarantees that $t_{c}$ satisfies the zero-mean condition. We now show that $t_{c}$ satisfies the interior Euler-Lagrange equation from Proposition \ref{prop:smooth_minimizer} although it does not fix the value of $c$.
    
    In the first place, since $\partial_{x^{0}}$ is parallel with respect to the Euclidean metric $h$, applying iteratively the connection $\nabla^{h}$ to $\nabla^{\eta}t_{c}-u=b(r)\partial_{x^{0}}$ (where $b(r):=f(r)-c$) yields the following simple expression
    \begin{align*}
        &\nabla^{h}_{x^{\mu}}(b(r)\partial_{x^{0}})=\partial_{x^{\mu}}(b(r))\partial_{x^{0}}\implies \nabla^{h}(b(r)\partial_{x^{0}})=db\otimes \partial_{x^{0}}\\
        &(\nabla^{h})^{i}(b(r)\partial_{x^{0}})=((\nabla^{h})^{i}b)\otimes \partial_{x^{0}}\;,
    \end{align*}
    where $i\in\N$ and $\mu\in\{1,\ldots,n\}$. Applying now the adjoint operator $\nabla^{h\ast}$ $i$-times gives back a vector collinear to $\partial_{x^{0}}$ which vanishes after acting with $\nabla^{\eta\ast}$:
    \begin{align*}
        &(\nabla^{h\ast})^{i}(\nabla^{h})^{i}(b(r)\partial_{x^{0}})=((\nabla^{h\ast})^{i}(\nabla^{h})^{i}(b))\partial_{x^{0}}=:\tilde{b}(r)\partial_{x^{0}}\\
        &\nabla^{\eta\ast}(\nabla^{h\ast})^{i}(\nabla^{h})^{i}(b(r)\partial_{x^{0}})=-\div_{\eta}(\tilde{b}(r)\partial_{x^{0}})=-\partial_{x^{0}}(\tilde{b}(r))=0\;.
    \end{align*}
    Hence, the $\nabla^{\eta}t_{c}-u$ term appearing in the Euler Lagrange equation \eqref{eq:strong_EL} vanishes. It remains to prove that also the remaining term vanishes. Note that
    \begin{align*}
        \frac{\nabla^{\eta}t_{c}}{|\nabla^{\eta}t_{c}|_{\eta}^{2q+2}}=-c^{-2q-1}\partial_{x^{0}}\;,
    \end{align*}
    so the $h$-divergence of this vector vanishes. So, for any constant $c\in(0,\infty)$, the time function $t_{c}=cx^{0}+C_{c}$, with $C_{c}$ fixed in \eqref{eq:ansatz}, satisfies the interior Euler-Lagrange equation. Therefore, the one-parameter family of time functions $\{cx^{0}+C_{c}:c>0\}$ are interior critical points of the misalignment functional, in the sense that for all $\varphi\in C^{\infty}_{0}(\Omega^{\circ})$ and $c>0$ it holds that
    \begin{align}
        D\Fpq(t_{c})[\varphi]=0\;.
    \end{align}
    Although $\{cx^{0}+C_{c}:c>0\}$ is a one-parameter family of interior critical points of the misalignment functional, they are even interior minimizers by convexity of the functional. However, this does not directly yield a global minimizer of $\Fpq$ (i.e. the first variation need not vanish for arbitrary $\eta\in H^{p}(\Omega)$ satisfying the zero mean condition). This is a consequence of the integration by parts step used in the derivation of the interior Euler-Lagrange equation: this step exploited that $\varphi|_{\partial\Omega}=0$ and thus expressions \eqref{eq:weak_EL} and \eqref{eq:Lorentzian_functional} need not hold for an arbitrary $\eta\in H^{p}(\Omega)$. If one would replace the zero-mean condition in the definition of $\Tspace$ with trace conditions for $(\nabla^{h})^{j}t$ for each $j\in\{0,\ldots,p-1\}$, then the solution to the Euler-Lagrange equation (with boundary conditions coming from the trace conditions) would be the global minimizer (by convexity), since, in that case, for any admissible variation $\eta\in H^{p}(\Omega)$, $(\nabla^{h})^{j}\eta$ vanishes on the boundary $\partial\Omega$ for any $j\in\{0,\ldots, p-1\}$. Nevertheless, this would not imply that, for the considered example, the global minimizer is of the form $t_{c}=cx^{0}+C_{c}$ since such a function does not necessarily satisfy the prescribed boundary conditions.
    
    One can still derive the optimal value of $c$ for which $t_{c}=cx^{0}+C_{c}$ minimizes the functional $\Fpq$ over the considered family of interior minimizers $\{cx^{0}+C_{c}:c>0\}$. It can be determined by evaluating the functional $\Fpq$ on the family of interior minimizers, which then yields a function of the variable $c$ and can again be minimized
    \begin{align}\label{eq:minimum_c}
    \Fpq(t_{c})&=\int_{\Omega}\big((c-f)^{2}+\alpha c^{-2q}\big)d^{n}x +\sum_{i=1}^{p-1}\|(\nabla^{h})^{i}u\|^{2}_{L^{2}(\Omega)}\nonumber\\
        &=c_{2}c^{2}-2c_{3} c+\alpha c_{2}c^{-2q}+c_{1}\\
        &\textrm{with}\quad c_{1}:=\sum_{i=1}^{p-1}\|(\nabla^{h})^{i}u\|^{2}_{L^{2}(\Omega)}+\int_{\Omega}f^{2}d^{n}x,\; c_{2}:=\mu_{h} (\Omega),\;c_{3}:=\int_{\Omega}fd^{n}x\nonumber\;.
    \end{align}
    where we used that $(\nabla^{h})^{j}(\nabla^{\eta}t_{c})=0$ for $j\geq1$. The optimal interior minimizer of $\Fpq$ is then the time function $t_{c}=cx^{0}+C_{c}$ for which the constant $c$ minimizes \eqref{eq:minimum_c} (a function of $c$). We will denote that value of the constant by $c_{\textrm{min}}$. Computing the first and second derivative (with respect to $c$) of the function \eqref{eq:minimum_c} yields
    \begin{align}\label{eq:eq_for_c}
        & 2c_{2}c_{\textrm{min}}-2c_{3}-2\alpha c_{2} qc_{\textrm{min}}^{-2q-1}=0\\
        &2c_{2}+2\alpha q(2q+1) c_{2} c_{\textrm{min}}^{-2q-2}>0\nonumber\;,
    \end{align}
    where positivity of the second equation implies that the critical value of $c$ of equation \eqref{eq:minimum_c} is a minimum. Hence, $t_{c_{\textrm{min}}}=c_{\textrm{min}}x^{0}+C_{c_{\textrm{min}}}$, with $c_{\textrm{min}}$ the solution to \eqref{eq:eq_for_c} and $C_{c_{\textrm{min}}}$ given by \eqref{eq:ansatz}, is the optimal interior minimizer of $\Fpq$. Note also that $c_{\textrm{min}}$ depends on $q,\alpha,f$ and $\Omega$ but not on $p$ (the constant $c_{1}$ was the only one which depended on $p$ and it does not appear in \eqref{eq:eq_for_c}).
}\end{Example}

\section{Further properties of the alignment time function}\label{sec:properties}
The previous section establishes existence, uniqueness and smoothness of the alignment time function but is rather non-constructive (the Euler-Lagrange equation presents a complicated form). Hence, a priori, it is not clear which properties this temporal function satisfies. In this section, different important features of the alignment time function are derived.

\subsection{Improved steepness: the $q\to\infty$ limit}\label{subsec:limit}\hfill
\vspace{0.2cm}

The previous section established that, if the parameters $p,q\in\N$ are chosen sufficiently large, there exists a constant $c>0$ (dependent on the parameters $p$, $q$, the vector field $u$, the metric $g$ and the set $\Omega$) such that
\begin{align}
    |\nabla^{g}t_{\alpha}|_{g}(x)\geq c(p,q)\quad \forall x\in\Omega\;.
\end{align}
However, in general, one cannot estimate this constant $c$ and thus neither the steepness of the gradient of the alignment time function. Intuitively one would expect such an estimate to exist for a sufficiently large penalization index $q$. The following proposition formalizes this idea.

In this section, in order to explicitly emphasize on the dependence on the parameters $p$ and $q$, the minimizer of the misalignment functional $F_{p,q}$ for a fixed $p,q\in \N$ will be denoted by $t_{p,q}$.

\begin{Prp}\label{prop:q_infinity}
    Let $p\geq2$ and $\alpha>0$ be fixed, and $t_{p,q}$ be the unique minimizer of $F^{\alpha}_{p,q}$. Then, the sequence $(t_{p,q})_{q\in\N}$ converges to $t_{p,\infty}\in \Tt$ as $q\to\infty$ strongly in $H^{1}(\Omega)$ and weakly in $H^{p}(\Omega)$. Along this sequence,
    \begin{align}\label{eq:lim_sup_bound}
        \limsup_{q\to\infty}\underset{x\in\Omega}{\text{\rm{ess inf}}}\;\big(|\nabla^{g}t_{p,q}|_{g}(x)\big)\leq \underset{x\in\Omega}{\text{\rm{ess inf}}}\;\big(|\nabla^{g}t_{p,\infty}|_{g}(x)\big)\;,
    \end{align}
    with the following bound
    \begin{align}\label{eq:ess_inf_bound}
        \underset{x\in \Omega}{\text{\rm{ess inf}}}\;(|\nabla^{g}t_{p,\infty}|_{g}(x))\geq1\;.
    \end{align}
    Moreover, $t_{p,\infty}$ is the unique minimizer of $F_{h,p}$ on the set
    \begin{align}
        \mathcal{S}^{p}:=\{t\in \Tspace : \underset{x\in\Omega}{\text{\rm{ess inf}}}\; |\nabla^{g}t|_{g}(x)\geq 1\}\;.
    \end{align}
    Finally, if $p> n/2+1$ we have that
    \begin{align}
        \inf_{x\in\Omega}|\nabla^{g}t_{p,q}|_{g}(x)\to \inf_{x\in\Omega}|\nabla^{g}t_{p,\infty}|_{g}(x)\geq1 \quad \textrm{as }q\to\infty\;.
    \end{align}
\end{Prp}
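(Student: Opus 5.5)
The plan is to obtain uniform $H^{p}$-bounds on $(t_{p,q})_{q\in\N}$ from a fixed comparator, extract a limit, read off the steepness bound from the behaviour of $L^{2q}$-norms, and then identify the limit as the minimizer of $F_{h,p}$ on $\mathcal{S}^{p}$.

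\textbf{Comparator and uniform bounds.} Since $(M,g)$ is stably causal, there is a temporal function $\tau$. On the compact set $\Omega$ we have $|\nabla^{g}\tau|_{g}\geq c_{0}>0$. After rescaling $\tau$ by a positive constant and subtracting its mean, we get some $s\in\mathcal{S}^{p}$. In particular $\mathcal{S}^{p}\neq\emptyset$.

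For every $\lambda\geq1$ the function $\lambda s$ lies in $\mathcal{S}^{p}$ and satisfies $F_{g,q}(\lambda s)\leq\lambda^{-2q}\mu_{h}(\Omega)$. Minimality of $t_{p,q}$ then gives
\begin{align*}
F_{h,p}(t_{p,q})+\alpha F_{g,q}(t_{p,q})\leq F_{h,p}(\lambda s)+\alpha\lambda^{-2q}\mu_{h}(\Omega)\leq F_{h,p}(s)+\alpha\mu_{h}(\Omega)=:K,
\end{align*}
where in the last step we take $\lambda=1$. Coercivity (Lemma \ref{lem:props_functional}) then bounds $(t_{p,q})_{q}$ in $H^{p}(\Omega)$. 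Exactly as in the proof of Theorem \ref{theo:main_result_compact}, we extract a subsequence converging weakly in $H^{p}(\Omega)$ and, by Rellich--Kondrachov ($p\geq2$), strongly in $H^{1}(\Omega)$ to some $t_{p,\infty}\in\Tspace$. Passing to a further subsequence, the gradients converge a.e.

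\textbf{Steepness of the limit (the main step).} From the bound above, $\big\||\nabla^{g}t_{p,q}|_{g}^{-1}\big\|_{L^{2q}}\leq (K/\alpha)^{1/2q}$. Fix $r<\infty$. For $2q\geq r$, Hölder's inequality gives
\begin{align*}
\big\||\nabla^{g}t_{p,q}|_{g}^{-1}\big\|_{L^{r}}\leq \mu_{h}(\Omega)^{1/r-1/2q}(K/\alpha)^{1/2q}.
\end{align*}
By a.e. convergence and Fatou's lemma, $\big\||\nabla^{g}t_{p,\infty}|_{g}^{-1}\big\|_{L^{r}}\leq\mu_{h}(\Omega)^{1/r}$. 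Letting $r\to\infty$ yields $\operatorname{ess\,sup}|\nabla^{g}t_{p,\infty}|_{g}^{-1}\leq1$, which is \eqref{eq:ess_inf_bound}. In particular $t_{p,\infty}\in\Tt\cap\mathcal{S}^{p}$.

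For \eqref{eq:lim_sup_bound}, take a subsequence realizing the $\limsup$, and fix any $m$ below that $\limsup$. Along a further subsequence, $|\nabla^{g}t_{p,q}|_{g}\geq m$ a.e. for all large $q$. The a.e. convergence of gradients transfers this to the limit, so $|\nabla^{g}t_{p,\infty}|_{g}\geq m$ a.e.

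\textbf{Minimality and full convergence.} Take any $s'\in\mathcal{S}^{p}$ and $\lambda>1$. Then $\lambda s'\in\mathcal{S}^{p}$ and $F_{g,q}(\lambda s')\leq\lambda^{-2q}\mu_{h}(\Omega)\to0$. Minimality of $t_{p,q}$ gives
\begin{align*}
F_{h,p}(t_{p,q})\leq F_{h,p}(\lambda s')+\alpha\lambda^{-2q}\mu_{h}(\Omega).
\end{align*}
Weak lower semicontinuity of $F_{h,p}$ then yields $F_{h,p}(t_{p,\infty})\leq F_{h,p}(\lambda s')$. Continuity of $F_{h,p}$ as $\lambda\to1$ gives $F_{h,p}(t_{p,\infty})\leq F_{h,p}(s')$.

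For uniqueness, note that $\mathcal{S}^{p}$ is convex: by the reverse triangle inequality \eqref{eq:reversed_triangle}, the Lorentzian norm of a convex combination is at least the convex combination of the norms. Since $F_{h,p}$ is strictly convex, the minimizer on $\mathcal{S}^{p}$ is unique. Hence every subsequence has a further subsequence with the same limit, and the whole sequence converges (weakly in $H^{p}$, strongly in $H^{1}$).

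\textbf{The case $p>n/2+1$.} The embedding $H^{p}(\Omega)\hookrightarrow C^{1}(\Omega)$ is then compact. The weakly convergent sequence therefore converges in $C^{1}$, so $|\nabla^{g}t_{p,q}|_{g}\to|\nabla^{g}t_{p,\infty}|_{g}$ uniformly, and the infima converge. For continuous functions on $\Omega$ with Lipschitz boundary, the infimum equals the essential infimum, so the limit is $\geq1$ by \eqref{eq:ess_inf_bound}.

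I expect the delicate point to be the $L^{2q}\to L^{\infty}$ passage. It is what forces the comparator bound to be uniform in $q$, and that uniformity is exactly why a comparator with steepness at least $1$, and the rescalings $\lambda s'$ with $\lambda>1$, are used.
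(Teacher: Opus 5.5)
Your proof is correct. Its skeleton matches the paper's:
\begin{itemize}
\item a $q$-uniform bound from a temporal comparator rescaled so that $|\nabla^{g}\tau|_{g}\geq1$;
\item coercivity together with Banach--Alaoglu and Rellich--Kondrachov;
\item identification of the limit as the unique minimizer of $F_{h,p}$ on the convex set $\mathcal{S}^{p}$, then the subsequence principle for full convergence;
\item the compact embedding $H^{p}\hookrightarrow C^{1}$ when $p>n/2+1$.
\end{itemize}
Where you genuinely differ is in three inner steps.

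\emph{Steepness of the limit.} The paper argues for $\operatorname{ess\,inf}|\nabla^{g}t_{p,\infty}|_{g}\geq1$ by contradiction. It applies Egorov's theorem on the positive-measure set where $|\nabla^{g}t_{p,\infty}|_{g}\leq E(t_{p,\infty})+\varepsilon$. This gives a set $B_{\varepsilon}$ of positive measure on which $|\nabla^{g}t_{p,q_{k}}|_{g}\leq E(t_{p,\infty})+2\varepsilon<1$ for large $k$. Then $F_{g,q_{k}}(t_{p,q_{k}})\geq\mu_{h}(B_{\varepsilon})(E(t_{p,\infty})+2\varepsilon)^{-2q_{k}}$ blows up. Your H\"older--Fatou passage from $L^{2q}$ to $L^{\infty}$ is direct. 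It also gives the slightly stronger intermediate bound $\||\nabla^{g}t_{p,\infty}|_{g}^{-1}\|_{L^{r}}\leq\mu_{h}(\Omega)^{1/r}$ for every finite $r$.

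\emph{The $\limsup$ inequality.} The paper reuses the Egorov set to show $E(t_{p,q_{k}})\leq E(t_{p,\infty})+2\varepsilon$ eventually. You only use that a.e.\ lower bounds survive a.e.\ limits, which needs no Egorov at all.

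\emph{Minimality.} The paper compares with convex combinations $\lambda t'+(1-\lambda)t$, where $t'$ is strictly steep, and lets $\lambda\to0$. It needs the reverse triangle inequality to keep $E(t_{\lambda})>1$. Your dilations $\lambda s'$ with $\lambda\downarrow1$ use only the homogeneity of $|\cdot|_{g}$ and avoid the auxiliary strictly steep function.

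\emph{Ordering of your $\limsup$ step.} As written, it takes a subsequence realizing the $\limsup$ and passes to an a.e.-convergent further subsequence. That subsequence is only known to converge to $t_{p,\infty}$, rather than to some other subsequential limit, once full convergence is established. You establish full convergence only in the following step. That step does not use the $\limsup$ inequality, so simply place the $\limsup$ argument after it. The paper handles this the same way (``once convergence is proven, the estimate holds for the full sequence'').
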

\begin{proof}
    Let $p\geq2$. Since $t_{p,q}$ is a minimizer of $F^{\alpha}_{p,q}$ and $(M,g)$ is stably causal there exists a temporal function $\tau$ with (possibly after a reparametrization) $|\nabla^{g}\tau|_{g}\geq 1$ and a $q$-independent constant $C\in(0,\infty)$ such that
    \begin{align}\label{eq:uniform_bound}
        F^{\alpha}_{p,q}(t_{p,q})\leq F^{\alpha}_{p,q}(\tau)\leq C\;.
    \end{align}
    Boundedness of the real-valued sequence $(F^{\alpha}_{p,q}(t_{p,q}))_{q\in\N}$ implies, by coercivity of the misalignment functional, that the sequence $(t_{p,q})_{q\in\N}$ is bounded in the $H^{p}(\Omega)$-norm. By the Banach-Alaoglu theorem and weak closedness of $\Tspace$, there exists a subsequence $(t_{p,q_{k}})_{k\in\N}$ converging weakly to a limit function $t_{p,\infty}\in\Tspace$. By the Rellich-Kondrachov theorem (after passing to a further subsequence) $(t_{p,q_{k}})_{k\in\N}$ converges also strongly in $H^{1}(\Omega)$, which implies a.e. convergence of (a subsequence of) $|\nabla^{g}t_{p,q_{k}}|_{g}$ to $|\nabla^{g}t_{p,\infty}|_{g}$.\\
    In order to prove expression \eqref{eq:lim_sup_bound}, consider the following functional
    \begin{align}\label{eq:ess_inf_def}
        E(t):=\underset{x\in\Omega}{\text{\rm{ess inf}}}\;|\nabla^{g}t|_{g}(x)\;,
    \end{align}
    with $t\in\Tspace$. Let $\varepsilon>0$. By the definition of the essential infimum the following set
    \begin{align}
        A_{\varepsilon}:=\{x\in\Omega:|\nabla^{g}t_{p,\infty}|_{g}(x)\leq E(t_{p,\infty})+\varepsilon\}
    \end{align}
    has positive measure. Since $|\nabla^{g}t_{p,q_{k}}|_{g}\to|\nabla^{g}t_{p,\infty}|_{g}$ a.e. in $A_{\varepsilon}$, by the Egorov theorem, there exists a positive measure subset $B_{\varepsilon}\subset A_{\varepsilon}$ in which the convergence is uniform. Thus, using the triangle inequality (for the absolute value) there exists a $K\in \N$ such that for all $x\in B_{\varepsilon}$ and $k\geq K$
    \begin{align}\label{eq:essinf_estimate}
        |\nabla^{g}t_{p,q_{k}}|_{g}(x)&\leq\big|\;|\nabla^{g}t_{p,q_{k}}|_{g}(x)-|\nabla^{g}t_{p,\infty}|_{g}(x)\big|+|\nabla^{g}t_{p,\infty}|_{g}(x)\leq E(t_{p,\infty})+2\varepsilon
    \end{align}
    Then, since $\mu_{h}(B_{\varepsilon})>0$ for $k\geq K$ it holds that
    \begin{align*}
        E(t_{p,q_{k}})\leq E(t_{p,\infty})+2\varepsilon \implies \limsup_{k\to\infty} E(t_{p,q_{k}})\leq E(t_{p,\infty})+2\varepsilon\;,
    \end{align*}
    taking the $\varepsilon\to0$ limit gives back expression \eqref{eq:lim_sup_bound} for the subsequence $(t_{p,q_{k}})_{q_{k}\in\N}$. Once convergence of  $(t_{p,q})_{q\in\N}$ is proven, the estimate holds for the full sequence.
    
    Secondly, we prove bound \eqref{eq:ess_inf_bound}. Assume that $E(t_{p,\infty})<1$ and choose a $\varepsilon>0$ small enough such that $E(t_{p,\infty})+2\varepsilon<1$. From expression \eqref{eq:essinf_estimate} it then follows that
    \begin{align}\label{eq:limsup_2}
        \!\!\!F_{g,q_{k}}(t_{p,q_{k}})=\int_{\Omega}\frac{d\mu_{h}}{|\nabla^{g}t_{p,q_{k}}|_{g}^{2q_{k}}}\! \geq\! \int_{B_{\varepsilon}}\frac{1}{|\nabla^{g}t_{p,q_{k}}|_{g}^{2q_{k}}}d\mu_{h}\geq \mu_{h}(B_{\varepsilon}) (E(t_{p,\infty})+2\varepsilon)^{-2q_{k}}\! ,
    \end{align}
    which diverges as $k\to \infty$, contradicting \eqref{eq:uniform_bound}.
    
    \noindent In the third place, we show convergence of the sequence of minimizers $(t_{p,q})_{q\in\N}$. Note that $\mathcal{S}^{p}$ is a convex set because of the reversed triangle inequality. Let $t,t'\in \mathcal{S}^{p}$ be arbitrary and $\lambda\in(0,1)$. Then, $t_{\lambda}:=\lambda t'+(1-\lambda)t\in \mathcal{S}^{p}$ since
    \begin{align*}
        &|\nabla^{g}t_{\lambda}|_{g}\geq \lambda|\nabla^{g}t'|_{g}+(1-\lambda)|\nabla^{g}t|_{g}\implies E(t_{\lambda})\geq \lambda E(t')+(1-\lambda)E(t)\geq1\;.
    \end{align*}
    In particular, consider the case with $|\nabla^{g}t'|_{g}>1$, for which $E(t_{\lambda})>1$. Then, $F_{g,q_{k}}(t_{\lambda})\to0$ since $F_{g,q_{k}}(t_{\lambda})\leq \mu_{h}(\Omega)(E(t_{\lambda}))^{-2q_{k}}\to0$ as $k\to\infty$). Using that $t_{p,q}$ minimizes $\Fpq$ it follows that:
    \begin{align}\label{eq:limsup_Fhp}
        &\limsup_{k\to\infty}F_{h,p}(t_{p,q_{k}})\leq \limsup_{k\to\infty}(F_{h,p}(t_{\lambda})+\alpha F_{g,q_{k}}(t_{\lambda}))= F_{h,p}(t_{\lambda})\nonumber\\
        &\implies \limsup_{k\to\infty}F_{h,p}(t_{p,q_{k}})\leq F_{h,p}(t)\;,
    \end{align}
    where we used that, $t_{\lambda}\to t$ as $\lambda\to0$. Using weak lower semicontinuity of $F_{h,p}$ yields
    \begin{align*}
        F_{h,p}(t_{p,\infty})\leq \liminf_{k\to\infty} F_{h,p}(t_{p,q_{k}})\leq\limsup_{k\to\infty} F_{h,p}(t_{p,q_{k}})\leq F_{h,p}(t)\;.
    \end{align*}
    Since $t\in \mathcal{S}^{p}$ was arbitrary and $t_{p,\infty}\in \mathcal{S}^{p}$, we conclude that
    \begin{align}
        F_{h,p}(t_{p,\infty})=\min_{t\in \mathcal{S}^{p}} F_{h,p}(t)\;,
    \end{align}
    i.e. $t_{p,\infty}$ is the minimizer of $F_{h,p}$ on the set $\mathcal{S}^{p}$. By strict convexity of $F_{h,p}$, the minimizer is unique. Hence, any subsequence of the $H^{p}(\Omega)$-bounded sequence $(t_{p,q})_{q\in\N}$ admits a further subsequence converging weakly in $H^{p}(\Omega)$ and strongly in $H^{1}(\Omega)$ (by Banach-Alaoglu and Rellich-Kondrachov) to the unique minimizer $t_{p,\infty}$ of $F_{h,p}$ on $\mathcal{S}^{p}$. This implies convergence of the full sequence $(t_{p,q})_{q\in\N}$.

    Finally, if $p>n/2+1$ then the bounded (in $H^{p}(\Omega)$) sequence $(t_{p,q})_{q\in\N}$ has a subsequence $(t_{p,q_{k}})_{q_{k}\in\N}$ converging in $C^{1}(\Omega)$ to $t_{p,\infty}\in C^{1}(\Omega)$ (that the limit has to be $t_{p,\infty}$ follows from convergence of $(t_{p,q})_{q\in\N}$ and uniqueness of limits). Since this holds for any subsequence, the full sequence $(t_{p,q})_{q\in\N}$ converges in $C^{1}$, so the essential infimums in \eqref{eq:lim_sup_bound} become infimums. Then, $C^{0}$-convergence of $|\nabla^{g}t_{p,q}|_{g}$ and the bound
    \begin{align*}
        \big| \inf_{x\in\Omega}|\nabla^{g}t_{p,q}|_{g}-\inf_{x\in\Omega}|\nabla^{g}t_{p,\infty}|_{g}\big|\leq  \||\nabla^{g}t_{p,q}|_{g}-|\nabla^{g}t_{p,\infty}|_{g}\|_{C^{0}(\Omega)}\;,
    \end{align*}
    imply convergence of the sequence $\big(\inf_{x\in\Omega}|\nabla^{g}t_{p,q}|_{g}\big)_{q\in\N}$ to the infimum of $\nabla^{g}t_{p,\infty}$.
\end{proof}
In specific cases, the targeted steepness of $t_{p,\infty}$ (i.e. that $E(t_{p,\infty})\geq 1$) might not be the most interesting one. In that case, one can consider instead the following null penalization functional
 \begin{align*}
     F_{g,q}^{C}(t):=\int_{\Omega}\frac{C^{2q}}{|\nabla^{g}t|_{g}^{2q}}d\mu_{h}\;,
 \end{align*}
 with $C>0$ the desired steepness bound. Then, the previous proposition implies that $E(t_{p,\infty})\geq C$. In Remark \ref{rem:bounded_gradient} this is used in order to show that, after multipliying penalization functional with a suitable constant, it follows that the gradient of $t_{p,\infty}$ is steeper than $u$.

\begin{Remark}\label{rem:tpinfty_isotone}\rm{
 The previous proposition implies that $t_{p,\infty}$ is almost everywhere steep for any $p\geq2$, a property which may not be satisfied by an arbitrary minimizer $t_{p,q}$ with $p\geq 2$ and $q\in \N$. In general, $t_{p,\infty}$ may fail to be a causal or isotone function (i.e. a function which is non-decreasing along future-directed causal curves). Nevertheless, if $p>n/2$ then $t_{p,\infty}$ is also continuous and, by \cite[Theorem 1.28]{Minguzzi_causality}, it is a causal function and if $p\geq n/2+1$ then it is a temporal function in $\Omega$ (with steepness constant $C\geq 1$).
}\end{Remark}

In the following corollary we use that $H^{p}(\Omega)$ is an inner product space in order to upgrade strong $H^{1}(\Omega)$ convergence of the sequence $(t_{p,q})_{q\in\N}$ to strong $H^{p}(\Omega)$ convergence. 

Furthermore, provided that $t_{p,\infty}$ is an interior point of the set $\mathcal{S}^{p}$ (viewing it again as a subset of the set of functions in $H^{p}(\Omega)$ which satisfy the zero-mean condition), we show that this temporal function is actually smooth. Recall that Proposition \ref{prop:smooth_minimizer}, which established smoothness of the minimizer $t_{\alpha}$, required that $p>n/2+1+\gamma$ and $q\geq n/\gamma$ in order to guarantee that $t_{\alpha}$ was an interior point of $\Tspace$. Otherwise, $t_{\alpha}$ would only have satisfied a differential inequality and not necessarily the weak Euler-Lagrange equation. The same problem appears with the minimizer $t_{p,\infty}$ of $F_{h,p}$ over the constrained set $\mathcal{S}^{p}$. In particular, the condition $E(t_{p,\infty})>1$ is necessary for the weak Euler-Lagrange equation to be well-defined.

\begin{Corollary}\label{cor:Hp_convergence}
    Let $p\geq2$ and $\alpha>0$:
    \begin{itemize}[leftmargin=2em]
        \item[(i)] $(F_{h,p}(t_{p,q}))_{q\in\N}$ converges to $F_{h,p}(t_{p,\infty})$ and $(t_{p,q})_{q\in\N}$ converges strongly in $H^{p}(\Omega)$ to $t_{p,\infty}\in\mathcal{S}^{p}$.
        \item [(ii)] If $p> n/2+1$ and
    \begin{align}\label{eq:infimum_tpinfty}
        \inf_{x\in\Omega}\;(|\nabla^{g}t_{p,\infty}|_{g}(x))>1  \;.
    \end{align}
    Then, $t_{p,\infty}\in C^{\infty}(\Omega^{\circ})$.
    \end{itemize}
\end{Corollary}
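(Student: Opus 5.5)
For (i), I would first prove convergence of the functional values and then turn this into norm convergence using the Hilbert structure. From the proof of Proposition \ref{prop:q_infinity}, specifically \eqref{eq:limsup_Fhp} applied with $t=t_{p,\infty}\in\mathcal{S}^{p}$, together with weak lower semicontinuity of $F_{h,p}$, we get
\begin{align*}
F_{h,p}(t_{p,\infty})\leq\liminf_{q\to\infty}F_{h,p}(t_{p,q})\leq\limsup_{q\to\infty}F_{h,p}(t_{p,q})\leq F_{h,p}(t_{p,\infty})\;.
\end{align*}
This is first along subsequences, and then for the full sequence by the usual subsequence argument. The one thing I need to check is that \eqref{eq:limsup_Fhp} can be applied to $t=t_{p,\infty}$ itself. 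This requires some $t'\in\mathcal{S}^{p}$ with $E(t')>1$, which I obtain by rescaling the comparator temporal function $\tau$.

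Next, I write $v_q:=u-\nabla^{g}t_{p,q}$ and $v_\infty:=u-\nabla^{g}t_{p,\infty}$. Then $F_{h,p}(t_{p,q})=\|v_q\|^{2}_{H^{p-1}}$, so these norms converge to $\|v_\infty\|^{2}_{H^{p-1}}$. The map $t\mapsto\nabla^{g}t$ is bounded and linear $H^{p}\to H^{p-1}$, hence weakly continuous, so $v_q\rightharpoonup v_\infty$ weakly in $H^{p-1}(\Omega)$. In a Hilbert space, weak convergence combined with convergence of norms gives strong convergence, so $\nabla^{g}t_{p,q}\to\nabla^{g}t_{p,\infty}$ in $H^{p-1}$. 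Since $g$ is nondegenerate, this also gives $\nabla^{h}t_{p,q}\to\nabla^{h}t_{p,\infty}$ in $H^{p-1}$, up to a constant, by $\nabla^{h}t=(h^{-1}g)\nabla^{g}t$ with smooth coefficients on compact $\Omega$. The Poincaré--Wirtinger estimate from Lemma \ref{lem:props_functional}, applied to the zero-mean difference $t_{p,q}-t_{p,\infty}$, then upgrades this to $H^{p}$ convergence.

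For (ii), I would mimic Proposition \ref{prop:smooth_minimizer}. Since $p>n/2+1$, we have $t_{p,\infty}\in C^{1}(\Omega)$, and the strict inequality \eqref{eq:infimum_tpinfty} on compact $\Omega$ gives a margin $\delta>0$ with $|\nabla^{g}t_{p,\infty}|_g\geq1+\delta$. For a test function $\eta$ with zero mean and $\|\eta\|_{H^{p}}$ small, the Sobolev embedding into $C^{1}$ gives $\|\nabla^{g}\eta\|_{C^0}$ small. Hence $\nabla^{g}(t_{p,\infty}+s\eta)$ remains past-directed timelike with $g$-norm at least $1$ by continuity of the Lorentzian norm on the timelike cone. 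So $t_{p,\infty}$ is an interior point of $\mathcal{S}^{p}$, and $DF_{h,p}(t_{p,\infty})[\eta]=0$ for all such $\eta$. In particular, via $\eta_\varphi$, this holds for all $\varphi\in C_0^\infty(\Omega^\circ)$.

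The resulting weak equation is \eqref{eq:strong_EL} with $\alpha=0$, namely $\sum_{i}L_{h,i}t_{p,\infty}=f(u)$. This is a linear uniformly elliptic equation of order $2p$ with smooth coefficients, by the symbol computation already carried out, and its right-hand side is smooth. Interior elliptic regularity \cite[Theorem 3.2]{Lions_Magenes}, iterated, then gives $t_{p,\infty}\in H^{r}_{\rm loc}$ for all $r$, and hence $C^\infty(\Omega^\circ)$. I expect the main obstacle to be the interiority step: making sure the $C^1$ control of perturbations is uniform and that $E\geq1$ is preserved, including near $\partial\Omega$. Compactness of $\Omega$ and the continuity of $t_{p,\infty}$'s gradient up to the boundary should take care of both.
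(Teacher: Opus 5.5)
Your proposal is correct and follows the paper's proof essentially step for step. In (i) the paper likewise combines \eqref{eq:limsup_Fhp} at $t=t_{p,\infty}$ with weak lower semicontinuity and the weak-plus-norm convergence argument in $H^{p-1}(\Omega)$. It upgrades to $H^{p}$ via the strong $H^{1}$ convergence from Proposition \ref{prop:q_infinity} rather than Poincar\'e--Wirtinger, and it leaves implicit the element $t'$ with $E(t')>1$ that you correctly produce by rescaling $\tau$. In (ii) it derives the same linear, uniformly elliptic Euler--Lagrange equation. The only difference there is that the paper checks admissibility of $t_{p,\infty}+s\varphi$ directly for compactly supported $\varphi$, via an explicit pointwise lower bound on $|\nabla^{g}t_{p,\infty}+s\nabla^{g}\varphi|_{g}^{2}$. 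You instead obtain interiority in the zero-mean $H^{p}$ topology from $H^{p}\hookrightarrow C^{1}$, which is equally valid.
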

\begin{proof}
    Consider again expression \eqref{eq:limsup_Fhp} with $t=t_{p,\infty}\in\mathcal{S}^{p}$. Using weak lower semicontinuity of $F_{h,p}$ it follows that
    \begin{align}
        \limsup_{q\to\infty}F_{h,p}(t_{p,q})\leq F_{h,p}(t_{p,\infty})\leq \liminf_{q\to\infty}F_{h,p}(t_{p,q})\;,
    \end{align}
    i.e. $F_{h,p}(t_{p,q})\to F_{h,p}(t_{p,\infty})$ as $q\to\infty$. Hence, the sequence $(\|X_{q}\|_{H^{p-1}(\Omega,h)})_{q\in\N}$ converges to  $\|X\|_{H^{p-1}(\Omega,h)}$, with $X_{q}:=u-\nabla^{g}t_{p,q}$ and $X_{\infty}:=u-\nabla^{g}t_{p,\infty}$, and by the previous proposition $X_{q}$ also converges weakly in $H^{p-1}$ to $X_{\infty}$.
    Since $H^{p-1}(\Omega)$ is a scalar product space, $X_{q}$ converges also strongly in $H^{p-1}$ to $X_{\infty}$ as follows from a direct inspection of the following expression
    \begin{align*}
        \|X_{q}-X_{\infty}\|_{H^{p-1}(\Omega)}^{2}=\|X_{q}\|_{H^{p-1}(\Omega)}^{2}+ \| X_{\infty}\|^{2}_{H^{p-1}(\Omega)}-2(X_{q},X_{\infty})_{H^{p-1}(\Omega)}\;,
    \end{align*}
    where the right hand side converges to $0$ as $q\to\infty$ by convergence of the norms and weak convergence in $H^{p-1}(\Omega)$. Strong $H^{p-1}(\Omega)$ convergence of the sequence of gradients together with strong $H^{1}(\Omega)$ convergence of the sequence of functions, allows us to upgrade the latter to strong $H^{p}(\Omega)$ convergence.\\
    For the second claim, let $\varphi\in C^{\infty}_{0}(\Omega^{\circ})$ and denote its support by $K$. By the proof of Proposition \ref{prop:smooth_minimizer} it is not necessary that they satisfy the zero-mean condition because the functional only depends on weak derivatives of $t\in\mathcal{S}^{p}$. Then, there exists a sufficiently small $s'\in [0,1]$ such that $t_{p,\infty}+s\varphi\in \mathcal{S}^{p}$ for all $|s|\in[0,s']$ (obvious in $\Omega\setminus K$). Indeed, condition \eqref{eq:infimum_tpinfty} implies existence of a $\delta>0$ such that $|\nabla^{g}t_{p,\infty}|_{g}(x)\geq 1+\delta$ for all $x\in\Omega$ and so in $K$ it holds that
    \begin{align*}
    |\nabla^{g}t_{p,\infty}+s\nabla^{g}\varphi|_{g}^{2}&=|\nabla^{g}t_{p,\infty}|_{g}^{2}-2sg(\nabla^{g}t_{p,\infty},\nabla^{g}\varphi)-s^{2}g(\nabla^{g}\varphi,\nabla^{g}\varphi)\\
    &\geq|\nabla^{g}t_{p,\infty}|_{g}^{2}-2|s| |g(\nabla^{g}t_{p,\infty},\nabla^{g}\varphi)|-s^{2}|g(\nabla^{g}\varphi,\nabla^{g}\varphi)|\\
        &\geq(1+\delta)^{2}-|s|C(2|\nabla^{g}t_{p,\infty}|_{h}|\nabla^{g}\varphi|_{h}+|s||\nabla^{g}\varphi|_{h}^{2})\\
        &\geq(1+\delta)^{2}-|s|C(C')^{2}(2+|s|)\;,
    \end{align*}
    where in the third line we simply used that since $g$ is smooth and the support of $\varphi$ is compact there exists a constant $C>0$ such that $|g(\nabla^{g}t_{p,\infty},\nabla^{g}\varphi)|\leq C|\nabla^{g}t_{p,\infty}|_{h}|\nabla^{g}\varphi|_{h}$ in $K$ and in the fourth line we exploited again compactness of $K$ and continuity of $\nabla^{g}t_{p,\infty}$ and $\nabla^{g}\varphi$ to upper bound their norms by a constant $C'$. Hence, one can now choose an $s'\in[0,1]$ sufficiently small such that
    \begin{align}\label{eq:s_estimate}
        s'C(C')^{2}(2+s')< (1+\delta)^{2}-1\;,
    \end{align}
    and then $|\nabla^{g}t_{p,\infty}+s'\nabla^{g}\varphi|_{g}^{2}>1$ in $\Omega$, so $t_{p,\infty}+s'\varphi\in \mathcal{S}^{p}$. Note that the function $f(s'):=s'(2+s')$ is strictly increasing for $s'\geq0$ ($f'(s')=2+2s'>0$) so \eqref{eq:s_estimate} is satisfied by any $|s|\in[0,s']$ and thus $t_{p,\infty}+s\varphi\in \mathcal{S}^{p}$ for all $s\in[-s',s']$.
    Then, the Gateaux derivative of $F_{h,p}$ at $t_{p,\infty}$ in the direction of $\varphi$ vanishes and, using the proof of Proposition \ref{prop:smooth_minimizer}, the following weak Euler-Lagrange equation is satisfied on $\Omega^{\circ}$
\begin{align}
    \sum_{i=0}^{p-1}\nabla^{g\ast}(\nabla^{h\ast})^{i}(\nabla^{h})^{i}(\nabla^{g} t_{p,\infty}-u)=0\;,
\end{align}
    which is again uniformly elliptic, yielding smoothness of $t_{p,\infty}$ in the interior of $\Omega$.
\end{proof}

Finally, the following corollary indicates how the minimizer of the misalignment $F_{h,p}$ with respect to the vector field $u$ and $t_{p,\infty}$ are related. As before, for $t\in\Tspace$ consider the following functional:
\begin{align}
        E(t):=\underset{x\in\Omega}{\text{\rm{ess inf}}}\;|\nabla^{g}t|_{g}(x)\;.
\end{align}

\begin{Corollary}\label{cor:pointwise_bound}
    Let $p\geq2$ and $\tilde{t}_{p}$ denote the minimizer of $F_{h,p}$ on $\Tspace$:
    \begin{itemize}
        \item[(i)] If $E(\tilde{t}_{p})\geq 1$, then $\tilde{t}_{p}=t_{p,\infty}$.
        \item[(ii)] If $E(\tilde{t}_{p})< 1$, then $E(t_{p,\infty})= 1$.
    \end{itemize}
    In particular, if $u$ is of gradient form, then 
    \begin{align*}
        E(t_{p,\infty})\geq \underset{x\in \Omega}{\text{\rm{inf}}}\;|u|_{g}(x)\;.
    \end{align*}
\end{Corollary}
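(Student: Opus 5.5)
Part (i) follows from the characterization in Proposition \ref{prop:q_infinity}: $t_{p,\infty}$ is the unique minimizer of $F_{h,p}$ on $\mathcal{S}^{p}$. If $E(\tilde t_{p})\geq 1$, then $\tilde t_{p}\in\mathcal{S}^{p}\subset\Tspace$. Since $\tilde t_{p}$ minimizes $F_{h,p}$ over all of $\Tspace$, it also minimizes over $\mathcal{S}^{p}$, so uniqueness (strict convexity of $F_{h,p}$, Lemma \ref{lem:props_functional}) gives $\tilde t_{p}=t_{p,\infty}$.

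For (ii), we already know $E(t_{p,\infty})\geq 1$ by \eqref{eq:ess_inf_bound}. Suppose for contradiction that $E(t_{p,\infty})>1$. Consider the segment $t_{\lambda}:=\lambda\tilde t_{p}+(1-\lambda)t_{p,\infty}$, which lies in $\Tspace$ by convexity. By the reversed triangle inequality (as in the convexity argument for $\mathcal{S}^{p}$), $E(t_{\lambda})\geq \lambda E(\tilde t_{p})+(1-\lambda)E(t_{p,\infty})$. Here $\tilde t_{p}$ has causal gradient, so $E(\tilde t_{p})\geq 0$. Hence for $\lambda>0$ small enough that $(1-\lambda)E(t_{p,\infty})\geq1$, we get $t_{\lambda}\in\mathcal{S}^{p}$. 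Since $E(\tilde t_{p})<1$, we have $\tilde t_{p}\neq t_{p,\infty}$, and $\tilde t_{p}$ is the strict global minimizer on $\Tspace$, so $F_{h,p}(\tilde t_{p})<F_{h,p}(t_{p,\infty})$. Strict convexity then gives
\begin{align*}
F_{h,p}(t_{\lambda})<\lambda F_{h,p}(\tilde t_{p})+(1-\lambda)F_{h,p}(t_{p,\infty})<F_{h,p}(t_{p,\infty}).
\end{align*}
This contradicts minimality of $t_{p,\infty}$ on $\mathcal{S}^{p}$, so $E(t_{p,\infty})=1$. The one point to check is that the essential-infimum estimate survives the convex combination. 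It does, since the reversed triangle inequality holds pointwise a.e. for past-directed causal vectors, including null or vanishing ones, with $|\cdot|_{g}\geq0$.

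For the gradient case, write $u=\nabla^{g}\tilde t$ with $\tilde t$ normalized as in Remark \ref{rem:properties} ii). Then $F_{h,p}(\tilde t)=0$, so $\tilde t_{p}=\tilde t$ and $E(\tilde t_{p})=\operatorname{ess\,inf}|u|_{g}=\inf_{\Omega}|u|_{g}$, using continuity of $u$ and the fact that $\Omega$ has Lipschitz boundary so it has nonempty interior near every point. If $E(\tilde t_{p})\geq1$, case (i) gives $E(t_{p,\infty})=E(\tilde t_{p})=\inf|u|_{g}$. Otherwise $\inf|u|_{g}<1=E(t_{p,\infty})$ by (ii). In either case $E(t_{p,\infty})\geq\inf_{\Omega}|u|_{g}$.

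The main obstacle is the strict-convexity contradiction in (ii), specifically making sure the perturbed function stays in $\mathcal{S}^{p}$. The lower bound on $E(t_{\lambda})$ above handles this.
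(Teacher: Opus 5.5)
Your proposal is correct and follows the paper's proof essentially step for step. Part (i) uses uniqueness of the minimizer of $F_{h,p}$ on $\mathcal{S}^{p}$. Part (ii) uses the convex combination $t_{\lambda}$, the reversed triangle inequality to keep $t_{\lambda}\in\mathcal{S}^{p}$ for small $\lambda>0$, and strict convexity to contradict minimality of $t_{p,\infty}$. The gradient case identifies $\tilde{t}_{p}$ with the normalized potential of $u$. Your claim that $\operatorname{ess\,inf}|u|_{g}=\inf|u|_{g}$ is true but unnecessary, since $\operatorname{ess\,inf}\geq\inf$ already suffices for the final inequality.
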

\begin{proof}
    For the first claim note that if $\tilde{t}_{p}$ is almost everywhere steep, then $\tilde{t}_{p}\in \mathcal{S}^{p}$ and since it minimizes $F_{h,p}$ over $\Tspace$ it also minimizes the functional over $\mathcal{S}^{p}$. Uniqueness of the minimizer and Proposition \ref{prop:q_infinity} then imply that $\tilde{t}_{p}=t_{p,\infty}$.
    \par Assume that $E(\tilde{t}_{p})<1$ and $E(t_{p,\infty})>1$. Since $\Tspace$ is convex, any convex combination of $\tilde{t}_{p}$ and $t_{p,\infty}$ belongs again to $\Tspace$. Applying the reversed triangle inequality to $t_{\lambda}:=\lambda \tilde{t}_{p}+(1-\lambda)t_{p,\infty}$, with $\lambda\in[0,1]$, leads to
    \begin{align*}
        E(t_{\lambda})\geq \lambda E(\tilde{t}_{p})+(1-\lambda)E(t_{p,\infty})\to E(t_{p,\infty})>1 \quad \textrm{as}\quad\lambda\to0^{+}\;.
    \end{align*}
    Hence, there exists a $\lambda'\in[0,1]$ such that $E(t_{\lambda})\geq 1$ for all $\lambda\in[0,\lambda']$, which implies that $t_{\lambda}\in \mathcal{S}^{p}$ for $\lambda\in [0,\lambda']$. On the other hand, using strict convexity of $F_{h,p}$ we have that
    \begin{align*}
        F_{h,p}(t_{\lambda})<\lambda F_{h,p}(\tilde{t}_{p})+(1-\lambda)F_{h,p}(t_{p,\infty})< F_{h,p}(t_{p,\infty})\;,
    \end{align*}
    where we used that $\tilde{t}_{p}$ is the minimizer of $F_{h,p}$ over $\Tspace$ and $\tilde{t}_{p}\neq t_{p,\infty}$. However, for $\lambda\in[0,\lambda']$, $t_{\lambda}\in \mathcal{S}^{p}$ so the above expression contradicts that $t_{p,\infty}$ is the minimizer of $F_{h,p}$ over $\mathcal{S}^{p}$. Therefore, $E(t_{p,\infty})=1$.

    Finally, if $u=\nabla^{g}f$ and $f$ satisfies the zero-mean condition (if not, one can subtract a constant such that it does satisfy it), then $f$ is the unique minimizer of $F_{h,p}$ on $\Tspace$ by strict convexity of the functional. Then, part (i) and (ii) imply that 
 \begin{align*}
     E(t_{p,\infty})\geq \inf_{x\in\Omega}|u|_{g}(x)\;.
 \end{align*}
\end{proof}
In the following remark we summarize the different results proven in this paper concerning steepness of the gradient of minimizers of the misalignment functional and compare it to the steepness of $u$. 
\begin{Remark}\label{rem:bounded_gradient}\rm{
 Even if $u$ is of gradient form, with $u=\nabla^{g}f$ and $f\in\Tspace$, in general the minimizer $t_{p,q}$ of $\Fpq$ is different from $f$ for $\alpha>0$. However, $\nabla^{g}t_{p,q}$ presents an improved average steepness with respect to $u$ in the following sense
 \begin{align*}
     \alpha F_{g,q}&(t_{p,q})\leq \Fpq(t_{p,q})\leq F_{h,p}(f)+\alpha F_{g,q}(f)=\alpha F_{g,q}(f)\nonumber\\
     &\implies \int_{\Omega}\frac{1}{|\nabla^{g}t_{p,q}|_{g}^{2q}}d\mu_{h}\leq\int_{\Omega}\frac{1}{|u|_{g}^{2q}}d\mu_{h}\;.
 \end{align*}
 In general, one cannot improve the previous expression to a pointwise comparison between the norms of $\nabla^{g}t_{p,q}$ and $u$. However a global estimate is available when considering the limit $q\to\infty$ as the previous corollary showed.
 \par What happens if $u$ is not of gradient form? Also in this scenario one can prove as steepness bound for the gradient of $t_{p,\infty}$ in terms of $u$ by modifying slightly the null gradient penalization functional. In particular, assume that 
 \begin{align*}
     F_{g,q}(t):=\int_{\Omega}\frac{m^{2q}}{|\nabla^{g}t|_{g}^{2q}}d\mu_{h}\;,
 \end{align*}
 where $m:=\inf_{x\in\Omega}|u|_{g}$. Let $t_{p,q}$ be again the minimizer of $\Fpq$ for fixed $p\geq2, q\in\N$ and $t_{p,\infty}$ the limit of the sequence $(t_{p,q})_{q\in\N}$. Then, for any past-directed smooth timelike vector field $u$ (i.e. even if it is not of gradient form) it holds that
 \begin{align}\label{eq:tpinfty_steepest}
     \underset{x\in\Omega}{\text{\rm{ess inf}}}\;\big(|\nabla^{g}t_{p,\infty}|_{g}(x)\big)\geq \inf_{x\in\Omega}(|u|_{g}(x))\;.
 \end{align}
 Indeed, also in this case the sequence $(\Fpq(t_{p,q}))_{q\in\N}$ is 
 bounded: if $m\leq1$ this is clear and if $m>1$, one can choose a temporal function $\tau$ satisfying that $|\nabla^{g}\tau|_{g}\geq m$ and then $\Fpq(t_{p,q})\leq \Fpq(\tau)\leq C$ (where $C$ is a $q$-independent constant). Expression \eqref{eq:tpinfty_steepest} now follows from \eqref{eq:limsup_2} by assuming that $E(t_{p,\infty})<m$ and choosing an $\varepsilon>0$ such that $E(t_{p,\infty})+2\varepsilon<m$.
}\end{Remark}

Finally, since $q$ quantifies the strength of the null gradient penalization, one might expect that $E(t_{p,q})$ increases monotonically in $q$, which would yield a further improvement of Proposition \ref{prop:q_infinity}. Nevertheless, the following example shows that, in general, such a statement does not hold.
\begin{Example}\label{ex:cylinder}\rm{
Consider the two dimensional cylinder $M:=\R\times \mathbb{S}$ with metric
\begin{align*}
    g=-ds^{2}+d\theta^{2}\;.
\end{align*}
Moreover, consider the vector field $u:=-a \partial_{s}$ with $a>0$ constant (so $h=\delta$ is the flat metric) and the subset $\Omega:=I\times \mathbb{S}\subset M$ with $I$ a compact interval. Since rotations leave $u$ and $\Omega$ invariant, Proposition \ref{prop:symmetric} implies that the minimizer of $\Fpq$ (with $p\geq 2$) is $\theta$-independent: given a rotation $\Phi_{\varphi} : M\to M,(s,\theta)\mapsto (s,\theta+\varphi \;(\textrm{mod }2\pi))$, $t_{p,q}(s,\theta)=t_{p,q}(\Phi_{\varphi}(s,\theta))$ for all $\varphi\in \R$. Thus, the minimizer is of the following form:
\begin{align}\label{eq:tx}
    t_{x}(s,\theta):=x(s)+C\;,
\end{align}
with $x\in H^{p}(I)$, $x'(s)>0$ ($\nabla^{g}t_{x}$ is past-directed timelike) and where $C$ is a constant which guarantees that $t_{x}$ satisfies the zero-mean condition. Moreover,
\begin{align}\label{eq:example_functional_ansatz}
    \Fpq(t_{x})&=\int_{\Omega}\Big((x'(s)-a)^{2}+ \alpha (x'(s))^{-2q}+\sum_{i=2}^{p}|x^{(i)}(s)|^{2}\Big)d\mu_{h}\nonumber\\
    &=:\int_{\Omega}\Big(f(x'(s))+\sum_{i=2}^{p}|x^{(i)}(s)|^{2}\Big)d\mu_{h}\;.
\end{align}
The minimizer of $f$ satisfies that
\begin{align}\label{eq:fprime}
    &f'(x')=2(x'-a)-2\alpha q(x')^{-2q-1}\overset{!}{=}0\\
    &f''(x')=2+2q\alpha(2q+1)(x')^{-2q-2}>0\nonumber\;.
\end{align}
Note that $f'(x')\to-\infty$ as $x'\to0^{+}$ (recall that $x'>0$) and $f'(x')\to\infty$ as $x'\to\infty$, so by the intermediate value theorem and that $f'$ is strictly increasing it follows that the equation $f'(x')=0$ has a unique solution. We denote by $c_{\alpha,q}$ the value of $x'(s)$ which solves \eqref{eq:fprime}. Since it is independent of $s$, the minimizer of $f$ is $x'(s)=c_{\alpha,q}$ for all $s$ and the corresponding temporal function is $t_{c_{\alpha,q}}(s,\theta)=c_{\alpha,q}s+C$. Now note that since $h$ is the flat metric, $\nabla^{h}(u-\nabla^{g}t_{c_{\alpha,q}})=\nabla^{h}((c_{\alpha,q}-a)\partial_{s})=0$, so
\begin{align*}
    \Fpq(t_{c_{\alpha,q}})=\mu_{h}(\Omega)f(c_{\alpha,q})\;.
\end{align*}
Since $t_{c_{\alpha,q}}$ minimizes the first term in \eqref{eq:example_functional_ansatz} and the second one is non-negative, for any other $t_{x}$ of the form \eqref{eq:tx} we have that $\Fpq(t_{x})\geq \Fpq(t_{c_{\alpha,q}})$. Since the minimizer $t_{p,q}$ of $\Fpq$ is rotationally invariant it follows that for any $t\in\Tspace$, $\Fpq(t)\geq \Fpq(t_{c_{\alpha,q}})$ and so $t_{p,q}=t_{c_{\alpha,q}}$ (by uniqueness of minimizers) with $c_{\alpha,q}$ the solution to
\begin{align*}
    &f'(c_{\alpha,q})=2(c_{\alpha,q}-a)-2\alpha qc_{\alpha,q}^{-2q-1}=0\;,
\end{align*}
Moreover, by Proposition \ref{prop:q_infinity}, the sequence $(t_{p,q})_{q\in\N}$ converges strongly in $H^{1}(\Omega)$ to $t_{p,\infty}$. Since each $t_{p,q}=t_{c_{\alpha,q}}$ is affine-linear, also $t_{p,\infty}$ is affine-linear (by $L^{2}$-convergence of the derivatives). Let $c_{\alpha,\infty}>0$ be the constant such that $t_{p,\infty}=c_{\alpha,\infty}s+C=t_{c_{\alpha,\infty}}$.

Let $a<1$. The minimizer of $F_{h,p}$ is $\tilde{t}_{p}(s,\theta)=as$ (up to a constant) and $E(\tilde{t}_{p})=a<1$, so Corollary \ref{cor:pointwise_bound} implies that $E(t_{p,\infty})=c_{\alpha,\infty}=1$. Moreover, $f'(1)=2-2a-2\alpha q$, so whenever $\alpha q>1-a$ we have that $f'(1)<0$. Since $f$ is strictly convex, $f'$ is strictly increasing which together with $f'(1)<0$ for $q>\frac{1-a}{\alpha}$ and $f'(c_{\alpha,q})=0$ implies that $c_{\alpha,q}>1=c_{\alpha,\infty}$. Hence, for $q> \frac{1-a}{\alpha}$ it holds that
\begin{align*}
    \inf_{x\in\Omega}\big(|\nabla^{g}t_{c_{\alpha,q}}|_{g}(x)\big)=c_{\alpha,q}>c_{\alpha,\infty}=\inf_{x\in\Omega}\big(|\nabla^{g}t_{c_{\alpha,\infty}}|_{g}(x)\big)\;.
\end{align*}
and, since $c_{\alpha,q}\to c_{\alpha,\infty}$, the infimum of $|\nabla^{g}t_{c_{\alpha,q}}|_{g}$ cannot increase monotonically with $q$. If $a\geq1$, then $E(t_{c_{\alpha,\infty}})=c_{\alpha,\infty}=a$ (by Corollary \ref{cor:pointwise_bound}, (i)) and $f'(a)<0$ for all $q\in\N$. So in this case we even have that $E(t_{c_{\alpha,q}})=c_{\alpha,q}>a=c_{\alpha,\infty}=E(t_{c_{\alpha,\infty}})$ for all $q\in\N$.
}\end{Example}

\subsection{Stability of the minimizer and spacetime convergence}\label{subsec:stability}\hfill
\vspace{0.2cm}

In order to prove stability of the alignment time function under perturbations in the considered vector field an important obstacle appears: namely, the Riemannian metric $h$ and thus the set $\Tspace$ and the Sobolev norm depend on the vector field $u$. For this reason, in the following proposition we assume that $h$ is a fixed background Riemannian metric (which, as discussed in the previous section presents the drawback that the associated alignment time function depends not only on $u$ and $g$ but also, in addition, on the chosen Riemannian metric $h$). Then, the next proposition establishes Lipschitz stability of the alignment time function in the $H^{p}$ topology under $H^{p-1}$ perturbations in the vector field $u$ if $p$ and $q$ are large enough.

\begin{Prp}\label{prop:stability}
    Let $\gamma\in(0,1)$, $p> n/2+1+\gamma$, $q\geq n/\gamma$, $\alpha>0$. Consider two smooth past-directed timelike vector fields $u$ and $\tilde{u}$ and denote by $t_{\alpha}$ and $\tilde{t}_{\alpha}$ the associated minimizer of $\Fpq$ and $\Tilde{F}^{\alpha}_{p,q}$. Then, there exists a constant $C>0$ (which may depend on $g, h, \Omega$ and $p$ but not on $q$, $\alpha$, $u$ or $\tilde{u}$) such that
    \begin{align}
        &\|t_{\alpha}-\tilde{t}_{\alpha}\|_{H^{p}(\Omega)}\leq C \|u-\tilde{u}\|_{H^{p-1}(\Omega)}\;.
    \end{align}
\end{Prp}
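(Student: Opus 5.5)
The plan is to use the first-order optimality conditions of the two minimizers and the monotonicity of the derivative of $F_{g,q}$ (the convexity from Lemma \ref{lem:props_functional}), in the spirit of the standard stability estimate for minimizers of strongly convex functionals.

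Under the hypotheses $p>n/2+1+\gamma$, $q\geq n/\gamma$, Proposition \ref{lem:everywhere_timelike} and Proposition \ref{prop:smooth_minimizer} imply that both $t_{\alpha}$ and $\tilde t_{\alpha}$ have everywhere timelike gradient. Hence they are interior points of $\Tspace$ inside the affine space of zero-mean $H^{p}$ functions. The functional is therefore Gateaux differentiable there, and the variational equality $D\Fpq(t_{\alpha})[\eta]=0$ holds for every zero-mean $\eta\in H^{p}(\Omega)$, not only for compactly supported test functions. The same holds for $\tilde t_\alpha$ with $\tilde u$. Since $h$ is now fixed, both problems live on the same space and differ only through $u$.

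Next I would take $\eta:=t_{\alpha}-\tilde t_{\alpha}$, which is zero-mean, in both equations and subtract. Using \eqref{eq:Gateaux_diff_Fhp} and \eqref{eq:Lorentzian_functional}, and writing $\langle\cdot,\cdot\rangle$ for the $H^{p-1}(\Omega,h)$ inner product, this gives
\begin{align*}
2\langle \nabla^{g}\eta,\nabla^{g}\eta\rangle-2\langle u-\tilde u,\nabla^{g}\eta\rangle+2\alpha q\int_{\Omega}g\Big(\frac{\nabla^{g}t_{\alpha}}{|\nabla^{g}t_{\alpha}|_{g}^{2q+2}}-\frac{\nabla^{g}\tilde t_{\alpha}}{|\nabla^{g}\tilde t_{\alpha}|_{g}^{2q+2}},\nabla^{g}\eta\Big)d\mu_{h}=0 .
\end{align*}
The key claim is that the last integral is nonnegative. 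The integrand $v\mapsto |v|_{g}^{-2q}$ is convex on the past timelike cone (strict convexity in Lemma \ref{lem:props_functional}). Its differential is $v\mapsto 2q\,g(v,\cdot)/|v|_{g}^{2q+2}$, with sign conventions to be checked against \eqref{eq:Lorentzian_functional}. The gradient of a convex function is monotone, so pointwise $\big(DG(v)-DG(w)\big)[v-w]\geq0$. The integrand above is exactly of this form with $v=\nabla^{g}t_\alpha$ and $w=\nabla^{g}\tilde t_\alpha$, and the timelike cone is convex, so the monotonicity applies. Discarding this term yields $\|\nabla^{g}\eta\|_{H^{p-1}}^{2}\leq \langle u-\tilde u,\nabla^{g}\eta\rangle\leq \|u-\tilde u\|_{H^{p-1}}\|\nabla^{g}\eta\|_{H^{p-1}}$. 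Hence $\|\nabla^{g}\eta\|_{H^{p-1}}\leq\|u-\tilde u\|_{H^{p-1}}$.

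Finally, the Poincaré–Wirtinger step from the coercivity part of Lemma \ref{lem:props_functional}, together with the equivalence $\|\nabla^{g}\eta\|_{H^{p-1}}\geq c\|\nabla^{h}\eta\|_{H^{p-1}}$, gives $\|\eta\|_{H^{p}}\leq C\|\nabla^{g}\eta\|_{H^{p-1}}$. Here $C$ depends only on $g,h,\Omega,p$, and in particular not on $q$, $\alpha$ or the vector fields. This yields the claim.

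The main obstacle is justifying the variational equality for the global direction $\eta$, which does not vanish on $\partial\Omega$. This requires differentiating under the integral on all of $\Omega$, including up to the boundary, using that $|\nabla^{g}t_{\alpha}|_{g}$ is bounded below on the compact set $\Omega$, and that $t_\alpha+s\eta\in\Tt$ for small $|s|$ because $\nabla^{g}\eta$ is continuous since $p>n/2+1$. No integration by parts is needed, since the estimate only uses the weak form. A secondary point is the sign convention in the monotonicity step, which must be checked directly.
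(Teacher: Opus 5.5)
Your proposal is correct and matches the paper's proof step by step: first-order optimality of both minimizers tested against $t_{\alpha}-\tilde t_{\alpha}$, monotonicity of $DF_{g,q}$ (from convexity) to discard the penalty term, then Cauchy--Schwarz and Poincar\'e--Wirtinger with a constant depending only on $g,h,\Omega,p$. The only cosmetic difference is that the paper derives one-sided variational inequalities from admissibility of the segment in the convex set $\mathcal{T}^{p}$, whereas you obtain equalities from the interior-point property, which is also valid under the stated $p,q$.
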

\begin{proof}
    For $s\in(0,1)$, $t_{\alpha}+s(\tilde{t}_{\alpha}-t_{\alpha})\in\Tspace$ and $\tilde{t}_{\alpha}+s(t_{\alpha}-\tilde{t}_{\alpha})\in\Tspace$ by convexity of $\Tspace$. Since $p> n/2+1+\gamma$ and $q\geq n/\gamma$, the gradient of the minimizers is everywhere timelike and the functional $F_{g,q}$ is Gateaux differentiable. In particular, convexity of $F_{g,q}$ implies that
    \begin{align*}
    &F_{g,q}(t_{\alpha})\geq F_{g,q}(\tilde{t}_{\alpha})+DF_{g,q}(\tilde{t}_{\alpha})[t_{\alpha}-\tilde{t}_{\alpha}]\implies DF_{g,q}(\tilde{t}_{\alpha})[t_{\alpha}-\tilde{t}_{\alpha}] + DF_{g,q}(t_{\alpha})[\tilde{t}_{\alpha}-t_{\alpha}] \leq0
    \end{align*}
    Let $\tilde{F}_{h,p}(t):=\|\tilde{u}-\nabla^{g}t\|^{2}_{H^{p-1}(\Omega)}$ and recall that $F_{h,p}(t):=\|u-\nabla^{g}t\|^{2}_{H^{p-1}(\Omega)}$ . Then, since $t_{\alpha}$ and $\tilde{t}_{\alpha}$ are minimizers of $\Fpq$ and $\Tilde{F}^{\alpha}_{p,q}$, the above inequality implies that
    \begin{align*}
        0&\leq D\Fpq(t_{\alpha})[\tilde{t}_{\alpha}-t_{\alpha}]+D\tilde{F}^{\alpha}_{p,q}(\tilde{t}_{\alpha})[t_{\alpha}-\tilde{t}_{\alpha}] \leq DF_{h,p}(t_{\alpha})[\tilde{t}_{\alpha}-t_{\alpha}] +
        D\tilde{F}_{h,p}(\tilde{t}_{\alpha})[t_{\alpha}-\tilde{t}_{\alpha}] 
        \\
        &=2(\langle \nabla^{g}t_{\alpha}-u,\nabla^{g}(\tilde{t}_{\alpha}-t_{\alpha})\rangle_{H^{p-1}(\Omega)}+\langle \nabla^{g}\tilde{t}_{\alpha}-\tilde{u},\nabla^{g}(t_{\alpha}-\tilde{t}_{\alpha})\rangle_{H^{p-1}(\Omega)})\\
        &=2(\langle u-\tilde{u},\nabla^{g}(t_{\alpha}-\tilde{t}_{\alpha})\rangle_{H^{p-1}(\Omega)}-\|\nabla^{g}(t_{\alpha}-\tilde{t}_{\alpha})\|_{H^{p-1}(\Omega)}^{2})\;,
    \end{align*}
    where in third line we used expression \eqref{eq:Gateaux_diff_Fhp} expressed in terms of the Sobolev inner product. Combined with the Cauchy-Schwarz inequality results in
    \begin{align*}
        \|\nabla^{g}(t_{\alpha}-\tilde{t}_{\alpha})\|_{H^{p-1}(\Omega)}^{2}&\leq\langle u-\tilde{u},\nabla^{g}(t_{\alpha}-\tilde{t}_{\alpha})\rangle_{H^{p-1}(\Omega)}\\
        &\leq \|u-\tilde{u}\|_{H^{p-1}(\Omega)} \|\nabla^{g}(t_{\alpha}-\tilde{t}_{\alpha})\|_{H^{p-1}(\Omega)}\;.
    \end{align*}
    Applying now the Poincaré-Wirtinger inequality (both $t_{\alpha}$ and $\tilde{t}_{\alpha}$ satisfy the zero-mean condition) as in the coercivity proof in Lemma \ref{lem:props_functional} gives the desired inequality with a constant which might depend on $p$, $\Omega$, $g$ and $h$ but not on $q, \alpha, u$ or $\tilde{u}$.
\end{proof}
In the following remark we analyze some of the difficulties in proving an analogous statement to the one of the previous proposition if the metric $h$ is constructed from $g$ and $u$ using expression \eqref{eq:Riemannian_metric}.
\begin{Remark}\rm{
    Let $h_{u}$ and $h_{\tilde{u}}$ be the smooth Riemannian metrics associated to $u$ and $\tilde{u}$ respectively through expression \eqref{eq:Riemannian_metric}. Since $\Omega$ is compact, the associated Sobolev norms $\|\cdot\|_{H^{p}(\Omega,h_{u})}$ and $\|\cdot\|_{H^{p}(\Omega,h_{\tilde{u}})}$ are equivalent and, in particular, there exists a constant $C>0$ such that
    \begin{align*}
    \|h_{u}-h_{\tilde{u}}\|_{H^{p-1}(\Omega,h_{0})}=\|2(|u|_{g}^{-2}u^{\flat}\otimes u^{\flat}-|\tilde{u}|_{g}^{-2}\tilde{u}^{\flat}\otimes \tilde{u}^{\flat})\|_{H^{p-1}(\Omega,h_{0})}\leq C \|u-\tilde{u}\|_{H^{p-1}(\Omega,h_{0})}
    \end{align*}
    where $h_{0}$ is a fixed background Riemannian metric. However, the problem is that this constant $C$ depends on $u$ and $\tilde{u}$ and the same holds when comparing the norms $\|\cdot\|_{H^{p}(\Omega,h_{u})}$ and $\|\cdot\|_{H^{p}(\Omega,h_{\tilde{u}})}$. Hence, it is not clear whether one can prove in this case a Lipschitz stability result analogous to the one in the previous proposition with the constant $C$ independent of $u$ and $\tilde{u}$.

}\end{Remark}

For the rest of the paper we consider again that the Riemannian metric $h$ is constructed from the pair $(g,u)$ using \eqref{eq:Riemannian_metric}. In the remainder of this subsection we show that, given a convergent sequence of Lorentzian metrics and vector fields on $M$, the associated sequence of alignment time functions converges to the desired minimizer of the misalignment functional. In particular, we consider Sobolev spaces defined through different Riemannian metrics: given a Riemannian metric $h_{k}$ we denote the associated Sobolev space by $H^{m}(\Omega,h_{k})$ and the associated space of square integrable functions by $L^{2}(\Omega,h_{k})$. Recall that by \eqref{eq:Riemannian_metric} a sequence of Lorentzian metrics $(g_{k})_{k\in \N}$ and vector fields $(u_{k})_{k\in \N}$  give rise to a sequence of Riemannian metrics $(h_{k})_{k\in\N}$. For each $k\in \N$ the associated misalignment functional (if well defined) is
    \begin{align}\label{eq:sequence_functionals}
        F^{\alpha,k}_{p,q}(t):=F_{h_{k},p}(t)+\alpha F_{g_{k},q}(t)=\|u_{k}-\nabla^{g_{k}}t\|_{H^{p-1}(\Omega,h_{k})}^{2}+\alpha\int_{\Omega}\frac{1}{|\nabla^{g_{k}}t|_{g_{k}}^{2q}}d\mu_{h_{k}}\;,
    \end{align}
where $t\in \mathcal{T}^{p,k}\subset H^{p}(\Omega,h_{k})$ with $\mathcal{T}^{p,k}$ defined in full analogy to $\Tspace$ and the zero-mean condition is to be satisfied with respect to the measure $\mu_{h_{k}}$. Note that in the rest of this section $C$ and $C'$ denote $k$-independent constants which may change from one line to another.

\begin{Lemma}\label{lem:Sobolev_comparison}
Consider a sequence of Riemannian metrics $(h_{k})_{k\in \N}$ which converge in $C^{p}$-norm to the metric $h$. For sufficiently large $k$ there exist constants $C,C'$ independent of $k$ such that for any function $f\in H^{p}(\Omega,h)$ it holds that
\begin{align}\label{eq:inequality}
        C'\|f\|_{H^{p}(\Omega,h_{k})}\leq\|f\|_{H^{p}(\Omega,h)}&\leq C\|f\|_{H^{p}(\Omega,h_{k})}\;.
\end{align}
An analogous estimate holds for any $H^{p-1}$-regular vector field.
\end{Lemma}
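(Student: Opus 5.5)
The plan is to compare the two norms term by term. I would write both connections as $\nabla^{h_k}=\nabla^{h}+A_k$, where $A_k:=\Gamma(h_k)-\Gamma(h)$ is a smooth $(1,2)$-tensor field, and use three facts: $h_k\to h$ in $C^{0}$, $A_k\to 0$ in $C^{p-1}$ (the Christoffel symbols involve one derivative of the metric, so $C^{p}$-convergence of the metrics gives $C^{p-1}$-convergence of the $A_k$), and the measures compare as $d\mu_{h_k}=\rho_k\,d\mu_h$ with $\rho_k\to1$ uniformly. All of these hold on the compact set $\Omega$. Hence for $k\geq k_0$ I can fix uniform bounds:
\begin{align*}
\tfrac12 h\leq h_k\leq 2h,\qquad \tfrac12\leq\rho_k\leq 2,\qquad \|A_k\|_{C^{p-1}(\Omega,h)}\leq 1 .
\end{align*}
The first bound also makes the pointwise norms $|T|_{h_k}$ and $|T|_{h}$ of any tensor $T$ of rank at most $p+1$ equivalent, with constants that depend only on the rank.

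The main step is to expand $(\nabla^{h_k})^{i}f$ for $i\leq p$ by induction on $i$. The claim is that
\begin{align*}
(\nabla^{h_k})^{i}f=(\nabla^{h})^{i}f+\sum_{j=1}^{i-1}B^{(k)}_{i,j}\ast(\nabla^{h})^{j}f ,
\end{align*}
where $\ast$ denotes contraction and each $B^{(k)}_{i,j}$ is a polynomial expression in $A_k,\nabla^{h}A_k,\ldots,(\nabla^{h})^{i-1-j}A_k$. The highest derivative of $A_k$ that appears is $(\nabla^{h})^{p-2}A_k$, which is still controlled by the $C^{p-1}$ bound. The sum starts at $j=1$ because $\nabla^{h_k}f=df=\nabla^{h}f$. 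This bookkeeping is the only delicate part: I must check that no derivative of order $p$ or higher falls on $A_k$, so that $C^{p}$-convergence of the metrics suffices, and that the coefficients are bounded in sup norm uniformly in $k\geq k_0$. Once that is verified, the pointwise bound is
\begin{align*}
|(\nabla^{h_k})^{i}f|_{h_k}\leq C\sum_{j\leq i}|(\nabla^{h})^{j}f|_{h}.
\end{align*}
Integrating this against $d\mu_{h_k}\leq 2\,d\mu_h$ and summing over $i$ gives $\|f\|_{H^{p}(\Omega,h_k)}\leq C\|f\|_{H^{p}(\Omega,h)}$, which is the left inequality of \eqref{eq:inequality} with $C'=1/C$.

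For the reverse inequality I run the same expansion with the roles of $h$ and $h_k$ exchanged, writing $\nabla^{h}=\nabla^{h_k}-A_k$. The coefficients are now built from $A_k$ and its $\nabla^{h_k}$-derivatives. I can rewrite these in terms of $\nabla^{h}$-derivatives of $A_k$ plus products of $A_k$ with itself, so they are again bounded uniformly in $k$. This yields $\|f\|_{H^{p}(\Omega,h)}\leq C\|f\|_{H^{p}(\Omega,h_k)}$. Since $H^{p}(\Omega,h)$ and $H^{p}(\Omega,h_k)$ coincide as sets on compact $\Omega$, the estimate applies to every $f\in H^{p}(\Omega,h)$.

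For an $H^{p-1}$ vector field $X$, the identical argument works with $p$ replaced by $p-1$. The connection now acts on vector indices as well, but the difference is still $A_k$ acting tensorially, and only derivatives of $A_k$ up to order $p-2$ appear. The constants stay $k$-independent once $k\geq k_0$.
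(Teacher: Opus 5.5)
Your proof is correct, but its mechanism differs from the paper's. The paper treats $(\nabla^{h})^{j}-(\nabla^{h_{k}})^{j}$ as a lower-order differential operator whose coefficients tend to zero. It deduces $\|(\nabla^{h})^{j}f-(\nabla^{h_{k}})^{j}f\|_{L^{2}(\Omega,h)}\leq\varepsilon_{k}\|f\|_{H^{p}(\Omega,h)}$ with $\varepsilon_{k}\to0$, combines this with the $L^{2}$ comparison coming from $C^{0}$-convergence, and absorbs the error term $C'\varepsilon_{k}\|f\|_{H^{p}(\Omega,h)}$ into the left-hand side once $k$ is large. You instead write out the expansion of $(\nabla^{h_{k}})^{i}f$ in terms of $(\nabla^{h})^{j}f$ (and conversely) via $A_{k}=\Gamma(h_{k})-\Gamma(h)$, and bound each direction directly. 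Because you only use that the coefficients $B^{(k)}_{i,j}$ are uniformly bounded, you need neither smallness nor an absorption step. Your bookkeeping is right: for functions only $(\nabla^{h})^{m}A_{k}$ with $m\leq p-2$ enter, and the same bound holds for vector fields, where the sum starts at $j=0$. So your argument works for any sequence that is uniformly equivalent to $h$ and bounded in $C^{p-1}$, not only for a $C^{p}$-convergent one. What the paper's perturbative formulation buys, if you track its constants, is that the two norms become asymptotically equal on bounded sets: the $L^{2}$ comparison constants tend to $1$ and $\varepsilon_{k}\to0$. This is what the proof of Proposition \ref{prop:spacetime_convergence} implicitly uses to pass from $F_{h_{k},p}(t_{k})\to F_{h,p}(t_{\alpha})$ to $\|X_{k}\|_{H^{p-1}(\Omega,h)}\to\|X\|_{H^{p-1}(\Omega,h)}$. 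Your expansion gives this too if you observe that every term of $B^{(k)}_{i,j}$ contains a factor of $A_{k}$ or one of its derivatives, so $B^{(k)}_{i,j}\to0$ uniformly; the bare uniform equivalence stated in the lemma does not give it by itself.
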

\begin{proof}
    In the first place, the claim holds directly for the $L^2$ norms by $C^{0}$ convergence of $(h_{k})_{k\in\N}$: for sufficiently large $k\in\N$ there exist constants $a,b>0$ (independent of $k$) such that $a h\leq h_{k}\leq bh$ (analogous bounds hold for the associated volume forms and the tensor norms $|\cdot|_{h}$ and $|\cdot|_{h_{k}}$). Hence, there exist $k$-independent constants $C,C'$ such that for sufficiently large $k$ and square integrable tensor field $T$
    \begin{align}\label{eq:L2comparison}
        C'\|T\|_{L^{2}(\Omega,h_{k})}\leq\| T\|_{L^{2}(\Omega,h)}\leq C\|T\|_{L^{2}(\Omega,h_{k})}    \;.
    \end{align}
    and, in particular, it holds for $T=(\nabla^{h_{k}})^{j} f$ with $j\in\{0,\ldots,p\}$. Moreover, as $h_{k}\to h$ strongly in $C^{p}$, the following bounded operator
    \begin{align}\label{eq:difference_derivatives}
        (\nabla^{h})^{j}-(\nabla^{h_{k}})^{j}: H^{p}(\Omega,h)\to H^{p-j}(\Omega,h)\;,
    \end{align}
    where $j\in\{1,\ldots,p\}$, is a differential operator whose coefficients converge to $0$ in the $C^{p-j}$ norm and thus in the operator norm. So for any $f\in H^{p}(\Omega,h)$ and $j\in\{1,\ldots,p\}$
    \begin{align}
        \|(\nabla^{h})^{j}f-(\nabla^{h_{k}})^{j}f\|_{L^{2}(\Omega,h)}\leq\|((\nabla^{h})^{j}-(\nabla^{h_{k}})^{j})f\|_{H^{p-j}(\Omega,h)}\leq \varepsilon_{k}\|f\|_{H^{p}(\Omega,h)}\;,
    \end{align}
    where $\varepsilon_{k}\to0$ as $k\to\infty$. By the triangle inequality and the second inequality in \eqref{eq:L2comparison},
    \begin{align*}
        \| (\nabla^{h})^{j}f\|_{L^{2}(\Omega,h)}&\leq \|(\nabla^{h_{k}})^{j}f\|_{L^{2}(\Omega,h)}+\|(\nabla^{h})^{j}f-(\nabla^{h_{k}})^{j}f\|_{L^{2}(\Omega,h)}\\
        &\leq C\| (\nabla^{h_{k}})^{j}f\|_{L^{2}(\Omega,h_{k})}+\varepsilon_{k}\|f\|_{H^{p}(\Omega,h)}\;.
    \end{align*}
    Squaring and summing over $j=1,\ldots, p$ yields:
    \begin{align}
        \|f\|_{H^{p}(\Omega,h)}\leq C\|f\|_{H^{p}(\Omega,h_{k})}+C'\varepsilon_{k}\|f\|_{H^{p}(\Omega,h)}\;.
    \end{align}
    Hence, for sufficiently large $k$ (in particular, when $\varepsilon_{k}C'<1$) there exists a $k$-independent constant $C$ such that for any $f\in H^{p}(\Omega,h)$
    \begin{align*}
        \|f\|_{H^{p}(\Omega,h)}\leq C\|f\|_{H^{p}(\Omega,h_{k})}\;.
    \end{align*}
    On the other hand, a similar argument exploiting the first inequality in \eqref{eq:L2comparison} gives
    \begin{align*}
        \| (\nabla^{h_{k}})^{j}f\|_{L^{2}(\Omega,h_{k})}&\leq C\| (\nabla^{h_{k}})^{j}f\|_{L^{2}(\Omega,h)}\leq C\|(\nabla^{h})^{j}f\|_{L^{2}(\Omega,h)}+\varepsilon_{k}\|f\|_{H^{p}(\Omega,h)}\;,
    \end{align*}
    which readily implies the first inequality in \eqref{eq:inequality}. Finally, the same proof holds if one considers a $H^{p-1}$ regular vector field instead of a function.
\end{proof}

\begin{Prp}\label{prop:spacetime_convergence}
    Let $\gamma\in(0,1)$, $r\in\N$, $p>n/2+r+\gamma$, $q\geq n/\gamma$ and $\alpha>0$. Consider a sequence of Lorentzian metrics $(g_{k})_{k\in \N}$ and vector fields $(u_{k})_{k\in \N}$ which satisfy that
    \begin{align}
        g_{k}\to g,\quad u_{k}\to u \quad\textrm{strongly in }C^{p}\;.
    \end{align}
    Then, the sequence of minimizers $(t_{k})_{k\in \N}$ of the associated misalignment functionals \eqref{eq:sequence_functionals} converges strongly in $C^{r,\gamma}$ and $H^{p}$ to $t_{\alpha}$, the minimizer of $\Fpq$.
\end{Prp}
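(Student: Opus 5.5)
I would prove the proposition by a $\Gamma$-convergence-type argument: uniform bounds, a weak limit, identification of the limit through lower semicontinuity plus a recovery sequence, and then an upgrade to strong convergence. Note first that $C^{p}$ convergence of $(g_k,u_k)$ gives $C^{p}$ convergence of $h_k$ to $h$ via \eqref{eq:Riemannian_metric}, because $|u_k|_{g_k}$ is bounded away from zero uniformly in $k$ for large $k$. Lemma \ref{lem:Sobolev_comparison} therefore lets me measure every function in the single fixed space $H^{p}(\Omega,h)$ with $k$-independent constants.

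\textbf{Uniform bound.} Stable causality of $g$ gives a smooth temporal function $\tau$. Since $g_k\to g$ in $C^0$ on the compact set $\Omega$, $\nabla^{g_k}\tau$ is past-directed timelike for large $k$, with $|\nabla^{g_k}\tau|_{g_k}\geq c>0$ uniformly. Let $\tau_k$ be $\tau$ minus its $\mu_{h_k}$-mean, so that $\tau_k\in\mathcal{T}^{p,k}$. Then $F^{\alpha,k}_{p,q}(t_k)\leq F^{\alpha,k}_{p,q}(\tau_k)\leq C$ with $C$ independent of $k$. The coercivity estimate of Lemma \ref{lem:props_functional} involves only Poincaré--Wirtinger and the comparison between $\nabla^{g_k}$ and $\nabla^{h_k}$, and both constants are uniform under $C^{p}$ convergence. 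Combined with Lemma \ref{lem:Sobolev_comparison}, this gives $\sup_k\|t_k\|_{H^{p}(\Omega,h)}<\infty$.

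\textbf{Identifying the limit.} By Banach--Alaoglu and Rellich--Kondrachov (with $p>n/2+r+\gamma$), a subsequence converges weakly in $H^{p}$ and strongly in $C^{r,\gamma'}$ for some $\gamma'$ slightly above $\gamma$ to some $t_\ast$. The zero-mean condition passes to the limit because $d\mu_{h_k}\to d\mu_h$ uniformly. Causality of $\nabla^{g}t_\ast$ follows from uniform convergence of $\nabla^{g_k}t_k$. The Fatou argument of Theorem \ref{theo:main_result_compact} shows that $t_\ast$ is a.e. timelike and that $F_{g,q}(t_\ast)\leq\liminf_k F_{g_k,q}(t_k)$, since the integrands converge pointwise and the densities of $\mu_{h_k}$ converge uniformly. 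For the first term, I write $u_k-\nabla^{g_k}t_k=(u-\nabla^{g}t_k)+R_k$, where $\|R_k\|_{H^{p-1}}\to0$ because the coefficients converge in $C^{p-1}$ and $t_k$ is bounded in $H^p$. Weak lower semicontinuity of the $H^{p-1}(\Omega,h)$ norm, together with the fact that the $H^{p-1}(h_k)$ and $H^{p-1}(h)$ norms of a fixed bounded field differ by $o(1)$ (the proof of Lemma \ref{lem:Sobolev_comparison} gives $1+o(1)$ constants), then yields $F^{\alpha}_{p,q}(t_\ast)\leq\liminf_k F^{\alpha,k}_{p,q}(t_k)$.

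For the reverse inequality I use a recovery sequence. Let $s_k$ be $t_\alpha$ recentered with respect to $\mu_{h_k}$. By Corollary \ref{coro:unique_minimizer}, $t_\alpha$ is $C^{1,\gamma}$ with gradient everywhere timelike, so $s_k\in\mathcal{T}^{p,k}$ for large $k$. Its integrand converges uniformly, hence $F^{\alpha,k}_{p,q}(s_k)\to F^{\alpha}_{p,q}(t_\alpha)$. Consequently
\[
F^{\alpha}_{p,q}(t_\ast)\leq\liminf_k F^{\alpha,k}_{p,q}(t_k)\leq\limsup_k F^{\alpha,k}_{p,q}(s_k)=F^{\alpha}_{p,q}(t_\alpha).
\]
Uniqueness in Theorem \ref{theo:main_result_compact} gives $t_\ast=t_\alpha$. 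Since every subsequence has a further subsequence with this same limit, the whole sequence converges weakly in $H^{p}$ and in $C^{r,\gamma}$.

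\textbf{Strong $H^{p}$ convergence.} This is the main obstacle. The chain of inequalities above forces $F^{\alpha,k}_{p,q}(t_k)\to F^{\alpha}_{p,q}(t_\alpha)$. Each of the two pieces is separately lower semicontinuous, so both must converge. In particular $\|u-\nabla^{g}t_k\|_{H^{p-1}(\Omega,h)}\to\|u-\nabla^{g}t_\alpha\|_{H^{p-1}(\Omega,h)}$, after absorbing the $o(1)$ metric errors. Weak convergence plus convergence of norms in the Hilbert space $H^{p-1}$ gives strong convergence of the gradients, exactly as in Corollary \ref{cor:Hp_convergence}. Together with the $C^0$ convergence of $t_k$, this yields strong $H^{p}$ convergence. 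The delicate point is to make the norm-comparison errors genuinely $o(1)$ rather than merely bounded by constants. I would handle this by expanding $(\nabla^{h_k})^{i}$ and the $h_k$-contractions as $(\nabla^{h})^{i}$ and $h$-contractions plus operators whose coefficients tend to $0$ in $C^{p-1}$, applied to the $H^{p}$-bounded sequence $(t_k)$.
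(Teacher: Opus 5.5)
Your proposal is correct and follows essentially the same route as the paper. It uses a temporal comparator $\tau$ for the uniform $H^{p}$ bound, weak $H^{p}$ and H\"older compactness, Fatou plus weak lower semicontinuity for the $\liminf$ inequality, the recentered $t_\alpha$ as competitor for the $\limsup$ inequality, uniqueness to identify the limit, and weak-plus-norm convergence in the Hilbert space $H^{p-1}$ for the strong upgrade. Your splitting argument that $F_{h_k,p}(t_k)$ and $F_{g_k,q}(t_k)$ converge separately, and your use of everywhere timelikeness of $\nabla^{g}t_\alpha$ to place the competitor in $\mathcal{T}^{p,k}$, make explicit two steps the paper only asserts (one small correction: the competitor's $F_{h_k,p}$-integrand converges in $L^{1}$ via your $R_k$-type decomposition, not uniformly, since $t_\alpha$ is only $H^{p}$ up to the boundary).
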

\begin{proof}
    The proof is divided in three parts: first, boundedness of the sequence of minimizers $(t_{k})_{k\in\N}$ is established, then $C^{r,\gamma}$-convergence to the minimizer $t_{\alpha}$ is proven, and, finally, this is upgraded to $H^{p}$-convergence.
    
    $C^{p}$ convergence of $(g_{k})_{k\in \N}$ and $(u_{k})_{k\in \N}$ implies $C^{p}$ convergence of the sequence of Riemannian metrics $(h_{k})_{k\in\N}$. Let $\tau$ be a smooth temporal function in $(M,g)$, $t_{k}$ the unique minimizer of $F^{\alpha,k}_{p,q}$ on $\mathcal{T}^{p,k}$ and $t_{\alpha}$ the minimizer of $\Fpq$. Up to an additive reparametrization guaranteeing that $\tau$ and $t_{\alpha}$ satisfy the zero-mean condition with respect to the measure $\mu_{h_{k}}$ and with a slight abuse of notation we have that $\tau,t_{\alpha}\in \mathcal{T}^{p,k}$. 
    
    The sequence $(\nabla^{g_{k}}t_{k})_{k\in\N}$ can be bounded by a $k$-independent constant $C$:
    \begin{align}\label{eq:boundtk}
        \|\nabla^{g_{k}}t_{k}\|_{H^{p-1}(\Omega,h_{k})}
        \leq C (1+(F^{\alpha,k}_{p,q}(\tau))^{1/2})<C'\;.
    \end{align}
    The first inequality follows from compactness of $\Omega$ and $C^{p}$ convergence of $u_{k}$
\begin{align*}
    \|\nabla^{g_{k}}t_{k}\|_{H^{p-1}(\Omega,h_{k})}&\leq \|u_{k}\|_{H^{p-1}(\Omega,h_{k})}+\|u_{k}-\nabla^{g_{k}}t_{k}\|_{H^{p-1}(\Omega,h_{k})}\leq C(1+ (F^{\alpha,k}_{p,q}(t_{k}))^{1/2}).
   \end{align*}
together with minimality of $t_{k}$. It remains to prove the second inequality in \eqref{eq:boundtk}. Since $g_{k}\to g$ uniformly, also $\nabla^{g_{k}}\tau\to \nabla^{g}\tau$ uniformly and $|\nabla^{g_{k}}\tau|_{g_{k}}\to |\nabla^{g}\tau|_{g}$ pointwise in $\Omega$. As $|\nabla^{g}\tau|_{g}\geq\delta$ for some $\delta>0$, there exists a $K\in\N$ with $|\nabla^{g_{k}}\tau|_{g_{k}}\geq \delta/2$ for all $k\geq K$. Since for sufficiently large $k$, $h_{k}\leq bh$, with $b>0$, we have that
    \begin{align}\label{eq:bound_tau}
        &F_{g_{k},q}(\tau)=\int_{\Omega}\frac{1}{|\nabla^{g_{k}}\tau|_{g_{k}}^{2q}}d\mu_{h_{k}}\leq C\int_{\Omega} (\delta/2)^{-2q}d\mu_{h}\leq C'\;.
    \end{align}
Finally, by Lemma \ref{lem:Sobolev_comparison} (applied to $u_{k}-\nabla^{g_{k}}\tau$) and $C^{p}$ convergence of $(g_{k})_{k\in\N}$ and $(u_{k})_{k\in\N}$ also $F_{h_{k},p}(\tau)$ can be bounded by a $k$ independent constant, concluding the proof of \eqref{eq:boundtk}. Boundedness of $(\nabla^{g_{k}}t_{k})_{k\in\N}$ in $H^{p-1}(\Omega,h_{k})$ can be upgraded to boundedness in $H^{p}(\Omega,h)$ of the sequence $(t_{k})_{k\in\N}$:
    \begin{align*}
        \|t_{k}\|_{H^{p}(\Omega,h)}\leq C\|t_{k}\|_{H^{p}(\Omega,h_{k})}\leq C\|\nabla^{h_{k}}t_{k}\|_{H^{p-1}(\Omega,h_{k})}\leq C\|\nabla^{g_{k}}t_{k}\|_{H^{p-1}(\Omega,h_{k})}\leq  C'\;,
    \end{align*}
    where the first inequality is due to Lemma \ref{lem:Sobolev_comparison}, the second one to the Poincaré-Wirtinger inequality (applied on $L^{2}(\Omega,h_{k})$), the third one to the $C^{p}$ convergence of the Riemannian metrics and compactness of $\Omega$, and the final one is \eqref{eq:boundtk}.
    By Banach-Alaoglu there exists a weakly converging subsequence in $H^{p}(\Omega)$ which, by the Sobolev embedding theorem (after passing to a further subsequence), also converges in the $C^{r,\gamma}$-norm to $t_{\ast}$. Hence, uniform convergence of the gradients implies that $\nabla^{g}t_{\ast}$  is a.e. causal and satisfies the zero-mean condition (so $t_{\ast}\in \Tspace$). Indeed, as $d\mu_{h_{k}}=\rho_{k}d\mu_{h}$ with $\rho_{k}$ positive and continuous and converging uniformly to 1, $t_{k}\rho_{k}$ converges in $L^{1}$ to $t_{\ast}$ and thus $\int_{\Omega}t_{\ast}d\mu_{h}=\lim_{k\to\infty}\int_{\Omega}t_{k}\rho_{k}d\mu_{h}=0$.
    
    The following inequality plays an important role in order to show that $t_{\ast}$ minimizes the functional $\Fpq$:
    \begin{align}\label{eq:liminf_Ftk}
        F^{\alpha}_{p,q}(t_{\ast})\leq \liminf_{k\to\infty}F_{p,q}^{\alpha,k}(t_{k})\;.
    \end{align}
    We proceed to prove this expression. In the first place, uniform convergence of $|\nabla^{g_{k}}t_{k}|_{g_{k}}$ to $|\nabla^{g}t_{\ast}|_{g}$, implies convergence of the non-negative functions $f_{k}:=(|\nabla^{g_{k}}t_{k}|_{g_{k}})^{-2q}\rho_{k}$ to the extended real valued function $f:=(|\nabla^{g}t_{\ast}|_{g})^{-2q}$ (a priori, $f$ might diverge since the gradient of $t_{\ast}$ can be null or vanish). By Fatou's lemma:
    \begin{align*}
        F_{g,q}(t_{\ast})=\int_{\Omega}\frac{1}{|\nabla^{g}t_{\ast}|_{g}^{2q}}d\mu_{h}\leq \liminf_{k\to\infty} \int_{\Omega}f_{k}d\mu_{h}= \liminf_{k\to\infty}F_{g_{k},q}(t_{k})\;.
    \end{align*}
    In particular, the right hand side is bounded by $F_{p,q}^{\alpha,k}(\tau)$ (since $t_{k}$ is the minimizer) and thus the gradient of $t_{\ast}$ is a.e. timelike. On the other hand, since $X_{k}:=u_{k}-\nabla^{g_{k}}t_{k}$ converges $H^{p-1}$-weakly to $X:=u-\nabla^{g}t_{\ast}$, also $(\nabla^{h_{k}})^{j}X_{k}$ converges weakly in $L^{2}$ to $(\nabla^{h})^{j}X$ for all $j\leq p-1$. Weak lower semicontinuity of the $L^{2}$ norm yields
    \begin{align*}
        F_{h,p}(t_{\ast})&=\sum_{i=0}^{p-1}\int_{\Omega} |(\nabla^{h})^{i}X|_{h}^{2}d\mu_{h}\leq \sum_{i=0}^{p-1}\liminf_{k\to\infty} \int_{\Omega} |(\nabla^{h_{k}})^{i}X_{k}|^{2}_{h}d\mu_{h}\\
        &\leq\sum_{i=0}^{p-1}\liminf_{k\to\infty} \int_{\Omega} |(\nabla^{h_{k}})^{i}X_{k}|^{2}_{h_{k}}d\mu_{h_{k}}\leq \liminf_{k\to\infty}F_{h_{k},p}(t_{k})\;,
    \end{align*}
    where the second inequality exploits that for an arbitrary tensor $T$ and sufficiently large $k$, $|T|^{2}_{h}\leq (1+\varepsilon_{k})|T|^{2}_{h_{k}}\rho_{k}$ with $\varepsilon_{k}\to0$ as $k\to\infty$.
    This concludes the proof of expression \eqref{eq:liminf_Ftk} which, together with the minimality of $t_{k}$, implies that
    \begin{align}\label{eq:convergence_seq}
    F^{\alpha}_{p,q}(t_{\ast})\leq \liminf_{k\to\infty}F_{p,q}^{\alpha,k}(t_{k})\leq
        \limsup_{k\to\infty} F_{p,q}^{\alpha,k}(t_{k})\leq \limsup_{k\to\infty} F_{p,q}^{\alpha,k}(t_{\alpha})= F^{\alpha}_{p,q}(t_{\alpha})\;.
    \end{align}
    and the last equality follows from $F^{\alpha,k}_{p,q}(t_{\alpha})\to F^{\alpha}_{p,q}(t_{\alpha})$. Since $t_{\alpha}$ is the unique minimizer of $F^{\alpha}_{p,q}$, it follows that $t_{\ast}=t_{\alpha}$. Hence, any subsequence of the sequence $(t_{k})_{k\in\N}$ admits a further subsequence converging $H^{p}$-weakly and $C^{r,\gamma}$-strongly to the unique minimizer $t_{\alpha}$ of $\Fpq$, which implies convergence of the full sequence.
    
    It remains to show strong $H^{p}$-convergence of the sequence $(t_{k})_{k\in\N}$. By \eqref{eq:convergence_seq}, $F^{\alpha,k}_{p,q}(t_{k})\to F^{\alpha}_{p,q}(t_{\alpha})$ so, in particular, $F_{h_{k},p}(t_{k})$ converges to $F_{h,p}(t_{\alpha})$. Together with $C^{p}$ convergence of $h_{k}$, it implies that the bounded sequence $(\|X_{k}\|_{H^{p-1}(\Omega,h)})_{k\in\N}$ converges to  $\|X\|_{H^{p-1}(\Omega,h)}$. It follows that $X_{k}$ converges to $X$ strongly in $H^{p-1}$ using the scalar product $(\cdot,\cdot)_{H^{p-1}(\Omega,h)}$:
    \begin{align*}
        \|X_{k}-X\|_{H^{p-1}(\Omega,h)}^{2}=\|X_{k}\|_{H^{p-1}(\Omega,h)}^{2}+ \| X\|^{2}_{H^{p-1}(\Omega,h)}-2(X_{k},X)_{H^{p-1}(\Omega,h)}\;,
    \end{align*}
    where the last expression also converges to $\|X\|_{H^{p-1}(\Omega,h)}^{2}$ by weak $H^{p-1}$ convergence of $X_{k}$ to $X$. Combined with $H^{1}$ convergence of $(t_{k})_{k\in\N}$ yields strong $H^{p}$ convergence.
\end{proof}
Note that it is expected that it is possible to lower considerably the demanded regularity for the sequence of Lorentzian metrics in the previous proposition. In particular, based on the discussion in Remark \ref{rem:optimal_regularity}, strong $H^{m}$ convergence, with $m\geq p$ and $m>n/2+1$, might be sufficient to obtain strong Sobolev convergence of the sequence of minimizers. This will be explored further in subsequent work.

It is important to remark that several recent notions of spacetime convergence rely on the choice of a time function. For instance, in \cite{Sakovich_Sormani_convergence} they employ the null distance, which requires the choice of a time function (in particular, they consider the cosmological time function, see also \cite{Sormani_Vega,Sakovich_Sormani_null_distance}). Another interesting example is the notion of convergence of globally hyperbolic spacetimes introduced in \cite{Burgos_Flores_Sanchez}, whose tools and setup share several important similarities with the ones used in this paper: in particular, it relies on the choice of a suitable Cauchy temporal function, which they use to construct a Riemannian metric (analogous to \eqref{eq:Riemannian_metric}) and enables them to exploit results from Riemannian geometry. It would be interesting to explore potential applications of the alignment time function to these different notions of spacetime convergence. This is discussed in more detail in Section \ref{sec:discussion}.

\subsection{Symmetry preservation}\hfill
\vspace{0.2cm}

The final subsection delves into the symmetry properties of the alignment time function. In particular, it is proven that, if the spacetime $(M,g)$ and the vector field $u$ enjoy suitable symmetry features, then the minimizer of the misalignment functional inherits the same properties.
\begin{Prp}\label{prop:symmetric}
    Let $p\geq2,q\in\N,\alpha\geq0$ and consider a $g$-isometry $\Phi: M\to M$ which preserves the subset $\Omega$ and the vector field $u$, i.e.
    \begin{align}
        \;\Phi(\Omega)=\Omega, \quad d\Phi|_{x} (u|_{x})=u|_{\Phi(x)} \quad \textrm{with}\quad x\in\Omega\;.
    \end{align}
    Then, the minimizer $t_{\alpha}$ of the misalignment functional $\Fpq$ satisfies that $t_{\alpha}\circ\Phi=t_{\alpha}$ almost everywhere in $\Omega$.
\end{Prp}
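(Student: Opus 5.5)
The plan is to exploit uniqueness of the minimizer (Theorem \ref{theo:main_result_compact} for $\alpha\geq 0$, $p\geq 2$). I will show that $t_{\alpha}\circ\Phi$ is again an admissible function in $\Tspace$ with $\Fpq(t_{\alpha}\circ\Phi)=\Fpq(t_{\alpha})$; uniqueness then forces $t_{\alpha}\circ\Phi=t_{\alpha}$ almost everywhere.

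The first step is to check that $\Phi$ is also an $h$-isometry, where $h$ is given by \eqref{eq:Riemannian_metric}. Since $\Phi^{*}g=g$ and $d\Phi(u)=u$, we get $\Phi^{*}u^{\flat}=u^{\flat}$ and $|u|_{g}\circ\Phi=|u|_{g}$, hence $\Phi^{*}h=h$. Consequently $\Phi$ preserves $d\mu_{h}$ and commutes with $\nabla^{h}$ and with $\nabla^{g}$. In particular $\nabla^{g}(t\circ\Phi)=\Phi^{*}(\nabla^{g}t)$, i.e. $d\Phi\big(\nabla^{g}(t\circ\Phi)|_{x}\big)=\nabla^{g}t|_{\Phi(x)}$.

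With this in hand, I verify that $t\mapsto t\circ\Phi$ maps $\Tspace$ to itself:
\begin{itemize}
\item $H^{p}$-regularity is preserved because $|(\nabla^{h})^{i}(t\circ\Phi)|_{h}=|(\nabla^{h})^{i}t|_{h}\circ\Phi$ and $\Phi|_{\Omega}$ is a measure-preserving diffeomorphism of $\Omega$.
\item The zero-mean condition follows from the change of variables $\int_{\Omega}t\circ\Phi\,d\mu_{h}=\int_{\Phi(\Omega)}t\,d\mu_{h}=\int_{\Omega}t\,d\mu_{h}$.
\item The causal character and time orientation of the gradient are preserved. Since $\Phi$ maps the past-directed vector $u$ to $u$, it is time-orientation preserving, so past-directed causal vectors go to past-directed causal vectors. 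Null sets are mapped to null sets, so the a.e. conditions (and membership in $\Tt$) are preserved.
\end{itemize}

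Next I compute the functional. Using $\Phi^{*}u=u$ and the naturality of $\nabla^{h}$ under isometries, we have $(\nabla^{h})^{i}(u-\nabla^{g}(t\circ\Phi))=\Phi^{*}\big((\nabla^{h})^{i}(u-\nabla^{g}t)\big)$. The pointwise $h$-norms are therefore composed with $\Phi$, and the change of variables gives $F_{h,p}(t\circ\Phi)=F_{h,p}(t)$. Similarly, $|\nabla^{g}(t\circ\Phi)|_{g}=|\nabla^{g}t|_{g}\circ\Phi$, which yields $F_{g,q}(t\circ\Phi)=F_{g,q}(t)$, including the value $+\infty$ case.

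Hence $t_{\alpha}\circ\Phi$ is a minimizer, and uniqueness concludes the proof. The main obstacle is the naturality bookkeeping for weak derivatives of $H^{p}$ functions under pullback by a diffeomorphism. I would handle it by proving the identities for smooth functions and passing to the limit by density, using that pullback by $\Phi$ is an isometry of $H^{p}(\Omega,h)$. The other point needing care is that $\Phi$ must be time-orientation preserving, and this is exactly guaranteed by $d\Phi(u)=u$.
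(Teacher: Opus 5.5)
Your proposal is correct and follows essentially the same route as the paper. Both arguments show that $t\mapsto t\circ\Phi$ maps $\Tspace$ into itself and leaves $\Fpq$ invariant, using that $\Phi$ is also an $h$-isometry, the transformation rule $\nabla^{g}(t\circ\Phi)=(d\Phi)^{-1}(\nabla^{g}t\circ\Phi)$, and time-orientation preservation from $d\Phi(u)=u$, and then conclude by uniqueness of the minimizer. Your remarks on the $H^{p}$-regularity of $t\circ\Phi$ and the density argument for weak derivatives only make explicit details the paper leaves implicit.
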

\begin{proof}
    The strategy of the proof is the following: under the above assumptions, it will be shown that for all $t\in \Tspace$
    \begin{align}
        t\circ\Phi\in\Tspace\quad\textrm{and}\quad\Fpq(t\circ\Phi)=\Fpq(t)\;,
    \end{align}
    which together with uniqueness of the minimizer implies that $t_{\alpha}=t_{\alpha}\circ\Phi$.
    \\In the first place, note that
    \begin{align}\label{eq:gradient_change1}
        &\nabla^{g}(t\circ \Phi)=(d\Phi)^{-1}(\nabla^{g}t\circ \Phi)\;.
    \end{align}
    Indeed for arbitrary $x\in \Omega$ and $Y\in T_xM$ the chain rule yields that
    \begin{align*}
     d(t\circ \Phi)|_{x}(Y)&=dt|_{\Phi(x)}(d\Phi|_{x}(Y))=g_{\Phi(x)}(\nabla^{g}t|_{\Phi(x)},d\Phi|_{x}(Y))\\
     &=g_{x}((d\Phi|_{x})^{-1}(\nabla^{g}t|_{\Phi(x)}),Y)\;,
    \end{align*}
    where in the last step we simply used that $\Phi$ is a $g$-isometry. Expression \eqref{eq:gradient_change1} then follows from noting that $d(t\circ \Phi)|_{x}(Y)=g_{x}(\nabla^{g}(t\circ \Phi)|_{x},Y)$.
    
    Using \eqref{eq:gradient_change1} and that isometries preserve the causal character, for any $t\in \Tspace$, $\nabla^{g}(t\circ \Phi)=d\Phi^{-1}(\nabla^{g}t\circ\Phi)$ is again causal. Since $d\Phi(u)=u$, the linear isometry $d\Phi$ preserves also time orientation of vectors with the same orientation as $u$: for a causal vector $v$ with the same time orientation as $u$, the same holds for $d\Phi(v)$ since
    \begin{align*}
        g(u,d\Phi(v))=g(d\Phi(u),d\Phi(v))=g(u,v)<0\;.
    \end{align*}
    Hence, $\nabla^{g}(t\circ \Phi)$ is past-directed causal. Furthermore, since $\Phi^{\ast}g=g$ and $d\Phi(u)=u$, $\Phi$ is also an isometry with respect to the Riemannian metric $h$ and thus preserves the measure $\mu_{h}$, i.e. $\Phi_{\ast}\mu_{h}=\mu_{h}$. As $\Phi$ preserves the subset $\Omega$ (i.e. $\Phi(\Omega)=\Omega$), it follows that $t\circ \Phi$ also satisfies the zero-mean condition
    \begin{align*}
        \int_{\Omega}(t\circ \Phi )d\mu_{h}=\int_{\Phi(\Omega)}t \;d(\Phi_{\ast}\mu_{h})=\int_{\Omega}t\;d\mu_{h}=0\;.
    \end{align*}
    Hence, $t\circ\Phi\in \Tspace$ for any $t\in \Tspace$. 
    
    On the other hand, expression \eqref{eq:gradient_change1} combined with $(d\Phi)^{-1}(u\circ \Phi)=u$ and that $\Phi$ is an $h$-isometry implies that the Riemannian functional $F_{h,p}$ remains invariant,
    \begin{align*}
         |u-\nabla^{g}(t\circ\Phi)|_{h}(x)&=|(d\Phi)^{-1}((u-\nabla^{g}t)\circ \Phi)|_{h}(x)=|u-\nabla^{g}t|_{h}(\Phi(x))\\
         |(\nabla^{h})^{i}(u-\nabla^{g}(t\circ\Phi))|^{2}_{h}(x)&=|(\nabla^{h})^{i}(u-\nabla^{g}t)|^{2}_{h}(\Phi(x))\\
        F_{h,p}(t\circ \Phi)&=\sum_{j=0}^{p-1}\int_{\Omega} |(\nabla^{h})^{j}(u-\nabla^{g}t)|_{h}^{2}\circ \Phi \;d\mu_{h}=F_{h,p}(t)\;,
    \end{align*}
    where $x\in \Omega,i\in\N$ and we used that the connection $\nabla^{h}$ behaves naturally under the pullback $d\Phi^{-1}$(i.e. $\nabla^{h}(d\Phi^{-1}(Y\circ \Phi))=d\Phi^{-1}(\nabla^{h}Y\circ \Phi)$ for any vector field $Y$). Finally, also the penalty term $F_{g,q}$ remains invariant:
    \begin{align*}
        F_{g,q}(t\circ\Phi)&=\int_{\Omega}\frac{1}{|\nabla^{g}(t\circ\Phi)|_{g}^{2q}}d\mu_{h}=\int_{\Omega}\Big(\frac{1}{|\nabla^{g}t|_{g}^{2q}}\circ\Phi\Big)\; d\mu_{h}=F_{g,q}(t)\;.
    \end{align*}
    Hence, for any $t\in\Tspace$, we have that
    \begin{align*}
        \Fpq(t\circ \Phi)=\Fpq(t)    \;.
    \end{align*}
    Uniqueness of the minimizer $t_{\alpha}$ implies that $t_{\alpha}\circ\Phi=t_{\alpha}$.
\end{proof}
    
What type of symmetries satisfy the assumptions of Proposition \ref{prop:symmetric} and thus preserve the minimizer of the misalignment functional? It is clear that this does not apply to translation symmetries since then the set $\Omega$ is not preserved. The following corollary constrains even more the class of symmetries to which Proposition \ref{prop:symmetric} applies.
\begin{Corollary}\label{cor:symmetry_implications}
    Assume there exists a group $G\subset \textrm{Isom}(M)$ for which each isometry $\Phi\in G$ satisfies the conditions of Proposition \ref{prop:symmetric}. Then,
    \begin{itemize}[leftmargin=2em]
        \item[(i)] Let $X\in\Gamma(TM)$ be the infinitesimal generator of a one-parameter family of isometries $(\Phi)_{s\in\R}\subset G$. Then, in the weak sense
        \begin{align}
        X(t_{\alpha})=0\;.
        \end{align}
        \item[(ii)] $G$ cannot act transitively on a non-empty open subset of $\Omega$.
        \item[(iii)] If $x\in \Omega$ is a fixed point of $\Phi\in G$, then $d\Phi_{x}$ has at least one fixed past-directed timelike vector. In particular, if $p>n/2+1$ then for any $\alpha\geq0$ and $q\in\N$ the corresponding alignment time function $t_{\alpha}$ satisfies that
        \begin{align*}
            d\Phi|_{x}(\nabla^{g}t_{\alpha}(x))=\nabla^{g}t_{\alpha}(x)    \;.
        \end{align*}
    \end{itemize}
\end{Corollary}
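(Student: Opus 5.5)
The three items all follow from the invariance $t_{\alpha}\circ\Phi=t_{\alpha}$ established in Proposition \ref{prop:symmetric}. Each is proved by specializing that identity.

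For (i), let $(\Phi_s)_{s\in\R}\subset G$ be generated by $X$. Proposition \ref{prop:symmetric} gives $t_{\alpha}\circ\Phi_s=t_{\alpha}$ a.e. in $\Omega$ for every $s$. To pass to the weak statement, I test against $\varphi\in C^{\infty}_{0}(\Omega^{\circ})$. Since each $\Phi_s$ preserves $\mu_h$ (it is a $g$-isometry fixing $u$, hence an $h$-isometry), the change of variables gives
\begin{align*}
\int_{\Omega} t_{\alpha}\,(\varphi\circ\Phi_{-s})\,d\mu_h=\int_{\Omega}(t_{\alpha}\circ\Phi_s)\,\varphi\,d\mu_h=\int_{\Omega}t_{\alpha}\,\varphi\,d\mu_h .
\end{align*}
Differentiating at $s=0$ yields $\int_{\Omega} t_{\alpha}\,X(\varphi)\,d\mu_h=0$. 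Since $X$ is Killing for $h$, we have $\div_h X=0$, so this is exactly $\int_{\Omega} X(t_{\alpha})\,\varphi\,d\mu_h=0$. As $t_{\alpha}\in H^{p}$ with $p\geq 2$, the function $X(t_{\alpha})=dt_{\alpha}(X)$ is in $L^2$, so it vanishes a.e.

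For (ii), I argue by contradiction. Suppose $G$ acts transitively on a nonempty open set $U\subset\Omega$. Then $t_{\alpha}$ is invariant under $G$, so it is constant on $U$, at least up to null sets. The continuity-free version of this step is the delicate point. I would use (i): the infinitesimal generators of $G$ span $T_xM$ at points of $U$, since transitivity implies the orbit map is a submersion. So $dt_{\alpha}=0$ a.e. on $U$, and hence $\nabla^{g}t_{\alpha}=0$ on a set of positive measure. For $\alpha>0$ this contradicts $t_{\alpha}\in\Tt$ from Theorem \ref{theo:main_result_compact}. For all $\alpha$ I would argue directly instead. $G$ fixes $u$, so the orbit directions would span $T_xM$, including timelike directions. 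Differentiating $d\Phi(u)=u$ along the generators shows that $[X,u]=0$ for all generators $X$. Then, using that $h$ is $G$-invariant, I would derive a contradiction from the fact that an open orbit carries a $G$-invariant timelike field together with a transitive isometric action. I expect this part to be the main obstacle. The cleanest route is probably to restrict to $\alpha>0$, or to note that $G$-invariance of $t_{\alpha}$ plus transitivity forces $\nabla^{g}t_{\alpha}=0$ on $U$, whereas the minimizer's gradient is a.e. causal and nonzero.

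For (iii), $d\Phi_x$ fixes $u_x$, which is past-directed timelike; this settles the first claim. For the second, take $p>n/2+1$. Sobolev embedding makes $t_{\alpha}\in C^1$, so the identity $t_{\alpha}\circ\Phi=t_{\alpha}$ holds everywhere by continuity. Applying \eqref{eq:gradient_change1} at the fixed point $x=\Phi(x)$ gives $\nabla^{g}t_{\alpha}(x)=(d\Phi_x)^{-1}\nabla^{g}t_{\alpha}(x)$. Applying $d\Phi_x$ to both sides yields the claim.
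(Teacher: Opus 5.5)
Your items (i) and (iii) follow the paper's argument. In (i) you even supply the justification the paper leaves implicit: pairing with $\varphi\in C^{\infty}_{0}(\Omega^{\circ})$, using that the flow preserves $\mu_h$ and $\Omega$, and using that $\div_h X=0$ because $X$ is $h$-Killing on $\Omega$.

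In (ii) you take a different route. The paper observes that (ii) is a statement about $G$ alone, so it may fix one convenient choice of parameters, namely $\alpha>0$ and $p>n/2$. Then $t_\alpha$ is continuous and $t_\alpha\circ\Phi=t_\alpha$ holds at every point of $\Omega$. For $x,y\in U$ with $\Phi(x)=y$ this gives $t_\alpha(y)=t_\alpha(x)$. So $t_\alpha$ is constant on $U$ and $\nabla^g t_\alpha=0$ there, contradicting $t_\alpha\in\mathcal{T}^{p,t}$. This uses nothing about $G$ beyond transitivity. It also dissolves the ``continuity-free'' difficulty you flagged, since nothing forces you to work with small $p$.

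Your alternative combines (i) with the claim that the generators of $G$ span $T_xM$ on $U$. That does give $dt_\alpha=0$ a.e. on $U$ already for $p\geq 2$, but the spanning claim presupposes that $G$ is a Lie group acting smoothly. The statement allows an arbitrary subgroup of $\mathrm{Isom}(M)$, which need not contain a single one-parameter subgroup. So ``the infinitesimal generators of $G$'' and ``the orbit map is a submersion'' are not available as written.

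The fix is to pass to the closure $\bar G$ of $G$ in the Lie group $\mathrm{Isom}(M,g)$:
\begin{itemize}
\item Limits of elements of $G$ (together with their inverses) still map the closed set $\Omega$ onto itself and fix $u$ on $\Omega$.
\item Hence Proposition~\ref{prop:symmetric} and your (i) apply to the one-parameter subgroups of $\bar G$.
\item An orbit of $\bar G$ containing an open set is $n$-dimensional, so the Lie algebra of $\bar G$ spans $T_xM$ on $U$.
\end{itemize}

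Finally, drop the detour for general $\alpha$. It is unnecessary, because $\alpha$ does not appear in (ii). The direct argument you sketch also cannot work as stated, since it never uses the compactness of $\Omega$ or the causality of $(M,g)$. For example, the full translation group of Minkowski space preserves $g$, $u=-\partial_{x^0}$ and $h$, commutes with $u$, and acts transitively. It fails only to preserve a compact $\Omega$.
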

\begin{proof}
    The first claim simply follows from the fact that for each $s\in\R$, the isometry $\Phi_{s}$ preserves the minimizer $t_{\alpha}$, i.e. $t_{\alpha}\circ \Phi_{s}=t_{\alpha}$ a.e.. Thus, in the weak sense
    \begin{align*}
        X(t_{\alpha})=\frac{d}{ds}(t_{\alpha}\circ\Phi_{s})\Big|_{s=0}=0\;.
    \end{align*}
    Secondly, assume that $G$ acts transitively on a non-empty open subset $U\subset \Omega$ (so $\mu_{h}(U)>0$). Let $p>n/2$, for which there exists a continuous and unique minimizer $t_{\alpha}\in \Tt$ of $\Fpq$ for any $\alpha>0$. Then, for any $x,y\in U$ there exists an isometry $\Phi\in G$ such that $\Phi(x)=y$ which implies that $t_{\alpha}$ is constant in $U$ since
    \begin{align*}
        t_{\alpha}(y)=t_{\alpha}(\Phi(x))=t_{\alpha}(x)\;.
    \end{align*}
    In particular, it follows that $\nabla^{g}t_{\alpha}=0$ a.e. in $U$ contradicting that $\nabla^{g}t_{\alpha}$ is a.e. timelike for any $\alpha>0$.
    \par Finally, let $x\in \Omega$ be a fixed point of $\Phi$. Then, $u_{x}$ is a fixed timelike vector of $d\Phi_{x}$. For $p> n/2+1$, the gradient of the minimizer $\nabla^{g}t_{\alpha}$ is continuous which together with $t_{\alpha}\circ \Phi=t_{\alpha}$ in $\Omega$ and expression \eqref{eq:gradient_change1} proves the claim
    \begin{align*}
        &\nabla^{g}t_{\alpha}(x)=\nabla^{g}(t_{\alpha}\circ\Phi)(x)=(d\Phi|_{x})^{-1}(\nabla^{g}t_{\alpha}(\Phi(x)))=(d\Phi|_{x})^{-1}(\nabla^{g}t_{\alpha}(x))\\
        &\implies d\Phi|_{x}(\nabla^{g}t_{\alpha}(x))=\nabla^{g}t_{\alpha}(x)\;,
    \end{align*}
    i.e. $\nabla^{g}t_{\alpha}(x)$ is a fixed point of $d\Phi_{x}$. If $\alpha>0$, $p>n/2+1+\gamma$, $q\geq n/\gamma$ and $u_{x}\neq(\nabla^{g}t_{\alpha})(x)$, then $d\Phi_{x}$ has even two distinct fixed past-directed timelike vectors.
\end{proof}
Note that, in particular, the previous corollary does not rule out rotations with a timelike axis which leave $u$ invariant, since then the axis contains all fixed points of $\Phi$ and $d\Phi$ has a fixed timelike vector on the axis. Moreover, that $d\Phi_{x}$ has a fixed past-directed timelike vector when $\Phi$ has the fixed point $x\in \Omega$ implies that $d\Phi_{x}\neq-\textrm{id}_{T_{x}M}$.

Proposition \ref{prop:symmetric} can also be used to explore conditions under which $t_{\alpha}$ is a Cauchy temporal function. Since the minimizer of the misalignment functional is a temporal function for any $\alpha>0$, $p> n/2+1+\gamma$ and $q\geq n/\gamma$, its level sets are spacelike and acausal hypersurfaces (so, in particular, they are intersected at most once by every future-directed timelike curve, i.e. they are achronal). However, in general, these level sets might not be Cauchy hypersurfaces in $\textrm{int}{(\Omega)}$. In the following corollary we show that, under specific symmetry conditions and boundary separation properties of $\partial \Omega$, the minimizer $t_{\alpha}|_{\Omega^{\circ}}$ is a Cauchy temporal function on $\Omega^{\circ}$.

\begin{Corollary}\label{cor:Cauchy}
    Let $\Omega_{+},\Omega_{-}\subset \partial\Omega$ be two disjoint subsets intersected exactly once by any maximal causal curve in $\Omega$, $\alpha>0$, $\gamma\in(0,1)$, $p> n/2+1+\gamma$ and $q\geq n/\gamma$. Moreover, assume there exists a group $G\subset \textrm{Isom}(M)$ for which each isometry $\Phi\in G$ satisfies the conditions of Proposition \ref{prop:symmetric}. If $G$ acts transitively on each of the sets $\Omega_{+}$ and $\Omega_{-}$, then $t_{\alpha}|_{\Omega^{\circ}}$ is a Cauchy temporal function on $(\Omega^{\circ},g|_{\Omega^{\circ}})$.
    $\Omega$.
\end{Corollary}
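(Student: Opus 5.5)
The plan has three steps: show $t_{\alpha}$ is constant on each of $\Omega_{+}$ and $\Omega_{-}$, show every maximal causal curve in $\Omega^{\circ}$ meets each level set exactly once, and then invoke the standard characterization of Cauchy temporal functions. Throughout, Corollary \ref{coro:unique_minimizer} and Proposition \ref{prop:smooth_minimizer} give that, under the stated hypotheses on $p,q,\alpha$, the minimizer $t_{\alpha}$ is $C^{1,\gamma}(\Omega)$, smooth in $\Omega^{\circ}$, and has everywhere past-directed timelike gradient.

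\textbf{Constancy on the boundary pieces.} By Proposition \ref{prop:symmetric}, $t_{\alpha}\circ\Phi=t_{\alpha}$ a.e. for every $\Phi\in G$. Since $t_{\alpha}$ is continuous on $\Omega$, this identity holds everywhere on $\Omega$, including on $\partial\Omega$. Transitivity of $G$ on $\Omega_{\pm}$ then gives that $t_{\alpha}$ is constant on each piece, say $t_{\alpha}|_{\Omega_{+}}\equiv c_{+}$ and $t_{\alpha}|_{\Omega_{-}}\equiv c_{-}$.

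\textbf{Range of $t_{\alpha}$ along causal curves.} Take any inextendible future-directed causal curve $\gamma$ in $\Omega^{\circ}$. Its closure is a maximal causal curve in $\Omega$, so by hypothesis it hits $\Omega_{+}$ exactly once and $\Omega_{-}$ exactly once. Two points need checking here: that the endpoints of this extension are exactly these intersection points, and that the hitting order is the same for every curve.
\begin{itemize}
\item For the ordering, I would use that $t_{\alpha}$ is strictly increasing along future-directed causal curves. Since $c_{+}$ and $c_{-}$ are fixed constants, all curves traverse from the piece with the smaller value to the one with the larger value. Relabel so that $c_{-}<c_{+}$; the case $c_{-}=c_{+}$ is excluded by strict monotonicity along any single curve.
\item By continuity of $t_{\alpha}$ up to the boundary, $t_{\alpha}\circ\gamma$ is strictly increasing with limits $c_{-}$ and $c_{+}$ at the two ends. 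Hence $t_{\alpha}(\Omega^{\circ})\subseteq(c_{-},c_{+})$, possibly after checking that interior points lie on such curves.
\item The intermediate value theorem then shows $\gamma$ meets every level set $t_{\alpha}^{-1}(c)$ with $c\in(c_{-},c_{+})$ exactly once.
\end{itemize}

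\textbf{Conclusion.} Every level set $\{t_{\alpha}=c\}\cap\Omega^{\circ}$ is therefore met exactly once by every inextendible causal curve, and is consequently a Cauchy hypersurface of $(\Omega^{\circ},g|_{\Omega^{\circ}})$. Since $t_{\alpha}$ is temporal, it is then a Cauchy temporal function; after composing with a diffeomorphism $(c_{-},c_{+})\to\R$ it becomes surjective, if that normalization is required.

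\textbf{Main obstacle.} The hardest step is the one linking inextendible causal curves in $\Omega^{\circ}$ to maximal causal curves in $\Omega$ with endpoints on $\Omega_{\pm}$. One has to rule out a curve in $\Omega^{\circ}$ accumulating on $\partial\Omega\setminus(\Omega_{+}\cup\Omega_{-})$ without ever reaching $\Omega_{\pm}$, and also interior points not lying on such curves. I would try to read the hypothesis ``intersected exactly once by any maximal causal curve'' as forcing endpoints in $\Omega_{\pm}$. I would then use compactness of $\Omega$ and the fact that future-directed causal curves in the compact set $\Omega$ of a stably causal spacetime have endpoints, since they cannot be imprisoned.
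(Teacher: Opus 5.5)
Your three steps are exactly the paper's proof: constancy on $\Omega_{\pm}$ via Proposition~\ref{prop:symmetric} and transitivity, then $c_-<c_+$ from strict monotonicity, the intermediate value theorem, and covering of $\Omega^{\circ}$. Your continuity argument, which upgrades $t_\alpha\circ\Phi=t_\alpha$ from a.e.\ to all of $\partial\Omega$ using $\Omega=\overline{\Omega^{\circ}}$, is a detail the paper omits.

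However, the step you flag as the main obstacle is a genuine gap, and compactness plus non-imprisonment cannot close it. Those tools give an inextendible causal curve $\sigma$ of $\Omega^{\circ}$ endpoints $y_\pm\in\partial\Omega$, but nothing places them in $\Omega_\pm$. Your claim that $\overline{\sigma}$ is a maximal causal curve in $\Omega$ is false in general. The curve may exit through $\partial\Omega\setminus(\Omega_+\cup\Omega_-)$ and reach $\Omega_\pm$ only after being continued along $\partial\Omega$. In that case $t_\alpha\circ\sigma$ has limits $t_\alpha(y_\pm)$, not $c_\pm$, and the conclusion actually fails under the literal hypotheses.

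A counterexample: in Minkowski space take $u=-\partial_{x^0}$, $\Omega=J^+(p)\cap J^-(q)$ with $p=(-1,\vec 0)$, $q=(1,\vec 0)$, $\Omega_-=\{p\}$, $\Omega_+=\{q\}$, $G=\{\mathrm{id}\}$. The hypotheses hold:
\begin{itemize}
\item $\Omega$ is convex, hence has Lipschitz boundary.
\item Every maximal causal curve in $\Omega$ runs from $p$ to $q$, since an endpoint $y\neq q$ can be continued by the causal segment $[y,q]\subset\Omega$, and similarly at $p$.
\item Transitivity on singletons is trivial.
\end{itemize}
Since $\nabla^g t_\alpha$ is timelike on all of $\Omega$, $t_\alpha$ increases strictly along the null generators of $\partial\Omega$. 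Hence $m:=\max_E t_\alpha<c_+$, where $E=\{x^0=0,\ |\vec x|=1\}$ is the edge. Fix $c\in(m,c_+)$ and consider the part in $\Omega^{\circ}$ of the null line $x^0=x^1-1+\varepsilon$ (other spatial coordinates zero). It is an inextendible causal curve of $\Omega^{\circ}$, and its future endpoint $(\varepsilon/2,1-\varepsilon/2,0,\dots,0)$ tends to $(0,1,0,\dots,0)\in E$. By continuity of $t_\alpha$ on $\Omega$ and monotonicity along the segment, $t_\alpha<c$ on it for small $\varepsilon$. So $\{t_\alpha=c\}\cap\Omega^{\circ}$ is not a Cauchy hypersurface of $\Omega^{\circ}$.

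The paper's proof makes the same tacit identification. It only considers maximal causal curves of $\Omega$ (``by assumption, its endpoints are on $\Omega_{\pm}$'') and never the inextendible curves of $\Omega^{\circ}$ that define Cauchy hypersurfaces there. So the reinterpretation you propose is not a reading of the hypothesis but a necessary strengthening of it.

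A clean strengthening is $\partial\Omega=\Omega_+\cup\Omega_-$, as for the time slabs in Remark~\ref{rem:cauchy}. With it, non-imprisonment gives distinct endpoints $y_-\neq y_+$ in $\Omega_+\cup\Omega_-$. Since $t_\alpha$ is constant on each piece while $t_\alpha(y_-)<t_\alpha(y_+)$, the endpoints must lie in different pieces, with the past endpoint in the lower one. This gives at once $c_-\neq c_+$, the same ordering for every curve, and the limits $c_\pm$ at the two ends. The rest of your argument then goes through unchanged.
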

\begin{proof}
    As discussed in the proof of the previous corollary, since $G$ acts transitively on $\Omega_{+}$ and $\Omega_{-}$ the minimizer is constant on these subsets. In particular, we denote by $c_{-},c_{+}\in\R$ its value on $\Omega_{-}$ and $\Omega_{+}$ respectively. Let $\gamma:[0,1]\to\Omega$ be a maximal future-directed causal curve. By assumption, its endpoints are on $\Omega_{\pm}$ so, without loss of generality, and since $t_{\alpha}$ is a time function $(t_{\alpha}\circ \gamma)(0)=c_{-}<c_{+}=(t_{\alpha}\circ \gamma)(1)$. As $t_{\alpha}\circ \gamma$ is strictly increasing and continuous, it takes any value in $(c_{-},c_{+})$ exactly once. So each hypersurface $\{x\in \Omega:t_{\alpha}(x)=c\}\cap\Omega^{\circ}$ with $c\in(c_{-},c_{+})$ is intersected exactly once by maximal causal curves in $\Omega$. Finally, every point in $x\in \Omega$ is reached by a maximal causal curve in $\Omega$ (with endpoints on $\Omega_{\pm}$), so $t_{\alpha}(x)\in(c_{-},c_{+})$ and thus the interior level sets $\{x\in \Omega:t_{\alpha}(x)=c\}\cap \Omega^{\circ}$ with $c\in(c_{-},c_{+})$ cover $\Omega^{\circ}$.
\end{proof}

\begin{Remark}\label{rem:cauchy}\rm{
    The proof of Corollary \ref{cor:Cauchy} implies that if there exist two disjoint subsets $\Omega_{+},\Omega_{-}\subset \partial\Omega$ intersected exactly once by any maximal causal curve in $\Omega$ and the symmetry assumption is replaced by demanding that
    \begin{align}\label{eq:time_separation}
        c_{-}:=\sup_{x\in \Omega_{-}}t_{\alpha}(x)<\Inf_{x\in \Omega_{+}}t_{\alpha}(x)=:c_{+}\;,
    \end{align}
    then, all the intermediate level sets $\{x\in\Omega:t_{\alpha}(x)=c\}\cap\Omega^{\circ}$, with $c\in(c_{-},c_{+})$, are Cauchy hypersurfaces in $\Omega^{\circ}$ (but they do not necessarily cover the full set $\Omega^{\circ}$). Indeed, let $\gamma:[0,1]\to \Omega$ be a maximal future-directed causal curve with $\gamma(0)\in \Omega_{-}$ and $\gamma(1)\in \Omega_{+}$ and let $c\in(c_{-},c_{+})$ be arbitrary. Then,
    \begin{align*}
        t_{\alpha}(\gamma(0))\leq c_{-}<c<c_{+}\leq    t_{\alpha}(\gamma(1))\;.
    \end{align*}
    Since $t_{\alpha}\circ \gamma$ is a strictly increasing continuous function, for each $c\in(c_{-},c_{+})$ there exists a unique $s\in(0,1)$ such that $t_{\alpha}(\gamma(s))=c$. These level sets do not necessarily cover $\Omega^{\circ}$ since for $c>c_{+}$ or $c<c_{-}$ the above argument does not apply and, in general, $\Omega^{\circ}\cap \{x\in\Omega:t_{\alpha}(x)=c \}$ may be non-empty. The right figure of Figure \ref{fig:two-images} is an example of a compact subset $\Omega$ of Minkowski spacetime for which condition \eqref{eq:time_separation} is satisfied and clearly any maximal (in $\Omega$) causal curve intersects $t_{\alpha}^{-1}(c)$ for any $c\in (c_{-},c_{+})$. This does not happen in the figure on the left.

    For instance, condition \eqref{eq:time_separation} is satisfied if the compact set $\Omega$ is the time-slab spanned by an auxiliary background Cauchy time function $\tau$ (i.e. $\Omega:=\{x\in M: a\leq \tau(x)\leq b\}$ with $a,b\in\R$) in a spacetime with compact Cauchy hypersurfaces and the zero-mean condition in the definition of $\Tspace$ is replaced with the trace condition
    \begin{align*}
        t|_{\partial\Omega}=\tau|_{\partial\Omega}\,.
    \end{align*}

    However, as discussed in Remark \ref{rem:properties}, the trace condition has the important drawback that, even if $u$ is of gradient form, in general there does not exist a function  $f\in H^{p}(\Omega)$ satisfying the trace condition and with $u=\nabla^{g}f$.
}\end{Remark}

\begin{figure}[t]
    \centering

    \begin{subfigure}[b]{0.58\textwidth}
        \centering
        \begin{overpic}[width=\linewidth]{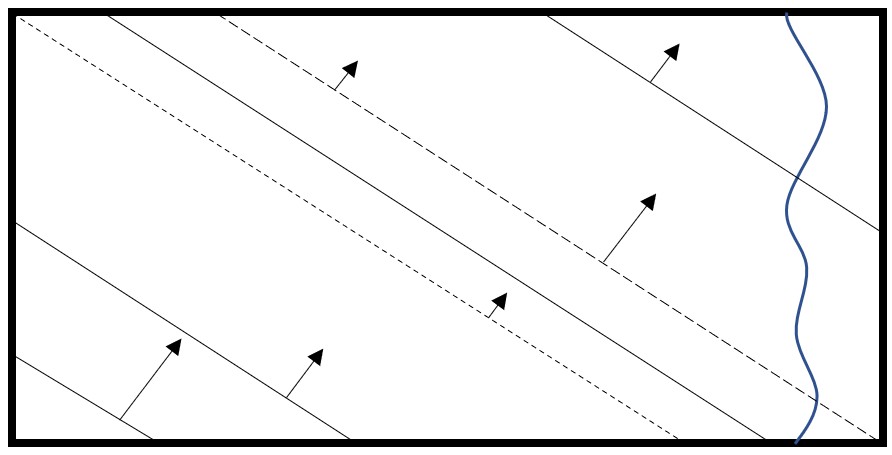}
            \put(48,53){\small $\Omega_{+}$}
            \put(48,4){\small $\Omega_{-}$}
            \put(23,20){\tiny $\displaystyle \{t_{\alpha}^{-1}(c_{+})\}$}
            \put(46,40){\tiny $\displaystyle \{t_{\alpha}^{-1}(c_{-})\}$}
            \put(38,8){\tiny $\displaystyle u$}
        \end{overpic}
    \end{subfigure}
    \hfill
    \begin{subfigure}[b]{0.41\textwidth}
        \centering
        \begin{overpic}[width=\linewidth]{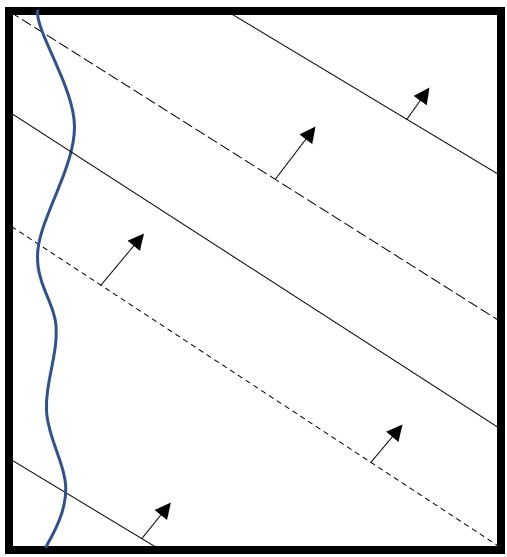}
            \put(45,103){\small $\Omega_{+}$}
            \put(45,4){\small $\Omega_{-}$}
            \put(30,85){\tiny$\displaystyle \{t_{\alpha}^{-1}(c_{+})\}$}
            \put(22,25){\tiny$\displaystyle \{t_{\alpha}^{-1}(c_{-})\}$}
            \put(34,9){\tiny$u$}
        \end{overpic}
    \end{subfigure}

    \caption{The example on the right satisfies condition \eqref{eq:time_separation}, the one on the left not.}
    \label{fig:two-images}
\end{figure}

\section{Discussion and outlook}\label{sec:discussion}

This paper introduces a novel tool in Lorentzian Geometry: the alignment time function. More precisely, we have established existence and uniqueness of this temporal function in compact subsets of stably causal spacetimes and studied some of its properties. The alignment time function minimizes the misalignment with respect to a fixed timelike vector field $u$ and, in a suitable sense, presents an improved steepness. In particular, it provides a suitable temporal function to the physical models which assume the existence of a preferred timelike vector field.

Our work initiates a detailed analysis on temporal functions adapted, in a suitable sense, to fixed timelike vector fields. Nevertheless, several open questions, further extensions or generalizations of the obtained results and potential applications arise as natural next steps. The present work sets the stage for these future developments.

In the first place, it would be interesting to extend the presented results to the non-compact setting. In particular, asymptotically flat spacetimes seem well suited for the presented setup to work. Other classes of spacetimes which seem promising for this purpose are globally hyperbolic spacetimes whose Cauchy hypersurfaces have bounded geometry or spacetimes with compact Cauchy hypersurfaces. Of course, in the non-compact setting one would have to consider different spaces of functions (e.g. weighted Sobolev spaces) from those employed in the compact setting.

Secondly, recall that the results of Section \ref{sec:compact_setting} also hold for the Sobolev space $W^{p,r}(\Omega)$, with $r\in(1,\infty)$, instead of $H^{p}(\Omega)$. Consequently, a future research direction is to investigate whether on $W^{p,\infty}$ it is possible to prove existence of a temporal function which minimizes the following functional:
\begin{align*}
    F_{p,\infty}(t):=\max_{i\leq p-1}\|(\nabla^{h})^{i}(u-\nabla^{g}t)\|_{L^{\infty}(\Omega)}\;.
\end{align*}
Note that $F_{p,\infty}$ is not strictly convex (minimizers may be non-unique) and $L^{\infty}(\Omega)$ is not a reflexive Banach space. However, a minimizer of this functional (if it exists) presents at least two remarkable features: on the one hand, in stark contrast to $\Fpq$, the value of $F_{p,\infty}$ does not necessarily increase with the volume of $\Omega$. This turns $F_{p,\infty}$ into a natural candidate for the non-compact setting. On the other hand, set
\begin{align*}
    \delta:=\inf_{x\in\Omega}\big(d_{h}(u(x),\partial C_{x}^{-})\big)>0\;,
\end{align*}
where $C_{x}^{-}\subset T_{x}M$ denotes the set of past-directed timelike vectors. Then, assuming the minimizer exists, one can prove everywhere timelikeness of its gradient if $u$ is sufficiently close to a gradient vector field in the following sense. Namely, if there exists a function $f\in W^{p,\infty}(\Omega)$ (with $p\geq2$) such that  
\begin{align}\label{eq:Linftynorm}
    F_{p,\infty}(f)< \delta\;,
\end{align}
then $\nabla^{g}t_{\infty}$ is everywhere timelike (for $p=1$ one could still conclude a.e. timelikeness): since $p\geq 2$, $\nabla^{g}t_{\infty}$ is continuous (by the Sobolev embeddings) and condition \eqref{eq:Linftynorm} implies that $F_{p,\infty}(t_{\infty})\leq F_{p,\infty}(f)<\delta$, so $\|u-\nabla^{g}t_{\infty}\|_{L^{\infty}(\Omega)}<\delta$. Thus, $\nabla^{g}t_{\infty}(x)\in C_{x}^{-}$ for all $x\in\Omega$. However, in general, a null-gradient penalizing functional may still be needed to guarantee that the gradient of the minimizer is timelike.

In the third place, it is relevant to determine under which conditions (the ones of Corollary \ref{cor:Cauchy} are very restrictive), potentially on the vector field $u$, the alignment time function is a Cauchy temporal function. For example, one would expect that if $u$ is sufficiently close to the gradient of a Cauchy temporal function, then the alignment time function is Cauchy.

Furthermore, as discussed in Remark \ref{rem:optimal_regularity}, the misalignment functional is still well-defined if the metric $g$ and the vector field $u$ are not smooth. Hence, it would be interesting to determine for which lower regularity class for $g$ and $u$ the results of Section \ref{sec:compact_setting} still hold.

Finally, one further avenue for future research is to analyze possible applications of the alignment time function to the study of spacetime convergence. As mentioned at the end of Section \ref{subsec:stability}, our setup is closely related to the one used in \cite{Burgos_Flores_Sanchez}. They introduce both a notion of convergence of spacetimes with a privileged past-directed timelike vector field and one based on a choice of suitable Cauchy temporal functions. Recall that Proposition \ref{prop:spacetime_convergence} establishes stability of the alignment time function under convergence of sequences of Lorentzian metrics $g_{k}$ and vector fields $u_{k}$. Thus, it would be interesting to investigate whether the alignment time function is a suitable temporal function in their convergence framework. Ideally, our variational approach could provide a canonical procedure to single out a suitable temporal function from a vector field satisfying the required properties (for their convergence framework). Nevertheless, the main obstacles are that, in general, the alignment time function is not a Cauchy temporal function and that our analysis of the alignment time function has so far been restricted to compact subsets. Hence, in order to bridge the gap between our results and \cite{Burgos_Flores_Sanchez}, one would first need to address the question of which conditions ensure that the alignment time function is a Cauchy temporal function as well as the existence of the alignment time function in the non-compact setting.

\Thanks{{{\em{Acknowledgments:}}}} I am grateful to Felix Finster for many fruitful discussions and valuable suggestions. I would also like to thank Miguel Sánchez Caja and Micha\l{} Eckstein for stimulating exchanges. I thankfully acknowledge support by the Studienstiftung des deutschen Volkes.

\end{document}